\newcommand{\sgn}{\operatorname{sgn}}
\DeclareMathOperator{\FS}{\mathsf{FS}}
\newcommand{\Id}{\operatorname{Id}}
\newcommand{\rev}{\operatorname{rev}}
\newcommand{\cupdot}{\mathbin{\mathaccent\cdot\cup}}
\newenvironment{enumerate*}%
  {\begin{enumerate}[(I)]%
    \setlength{\itemsep}{10pt}%
    \setlength{\parskip}{0pt}}%
  {\end{enumerate}}
\newtheorem{theorem}{Theorem}[section]
\newtheorem{proposition}[theorem]{Proposition}
\newtheorem{conjecture}[theorem]{Conjecture}
\newtheorem{question}[theorem]{Question}
\newtheorem{problem}[theorem]{Problem}
\newtheorem{lemma}[theorem]{Lemma}
\theoremstyle{definition}
\newtheorem{definition}[theorem]{Definition}
\newtheorem{example}[theorem]{Example}
\begin{document}

\title{Typical and Extremal Aspects of Friends-and-Strangers Graphs}
\subjclass[2010]{}

\author[Noga Alon, Colin Defant, and Noah Kravitz]{Noga Alon}
\address{Department of Mathematics, Princeton University, Princeton, NJ 08544, USA and Schools of Mathematics and Computer Science, Tel Aviv University, Tel Aviv 69978, Israel}
\email{nalon@math.princeton.edu}
\author[]{Colin Defant}
\address[]{Department of Mathematics, Princeton University, Princeton, NJ 08544}
\email{cdefant@princeton.edu}
\author[]{Noah Kravitz}
\address[]{Department of Mathematics, Princeton University, Princeton, NJ 08544}
\email{nkravitz@princeton.edu}


\begin{abstract}
Given graphs $X$ and $Y$ with vertex sets $V(X)$ and $V(Y)$ of the same cardinality, the friends-and-strangers graph $\FS(X,Y)$ is the graph whose vertex set consists of all bijections $\sigma:V(X)\to V(Y)$, where two bijections $\sigma$ and $\sigma'$ are adjacent if they agree everywhere except for two adjacent vertices $a,b \in V(X)$ such that $\sigma(a)$ and $\sigma(b)$ are adjacent in $Y$.  The most fundamental question that one can ask about these friends-and-strangers graphs is whether or not they are connected; we address this problem from two different perspectives.  First, we address the case of ``typical'' $X$ and $Y$ by proving that if $X$ and $Y$ are independent Erd\H{o}s-R\'enyi random graphs with $n$ vertices and edge probability $p$, then the threshold probability guaranteeing the connectedness of $\FS(X,Y)$ with high probability is $p=n^{-1/2+o(1)}$.  Second, we address the case of ``extremal'' $X$ and $Y$ by proving that the smallest minimum degree of the $n$-vertex graphs $X$ and $Y$ that guarantees the connectedness of $\FS(X,Y)$ is between $3n/5+O(1)$ and $9n/14+O(1)$. When $X$ and $Y$ are bipartite, a parity obstruction forces $\FS(X,Y)$ to be disconnected. In this bipartite setting, we prove analogous ``typical'' and ``extremal'' results concerning when $\FS(X,Y)$ has exactly $2$ connected components; for the extremal question, we obtain a nearly exact result.
\end{abstract}

\maketitle

\section{Introduction}
\subsection{Background}

The second and third authors \cite{friends} recently introduced a general problem concerning friends and strangers walking on graphs.  Given simple graphs $X$ and $Y$ on $n$ vertices, we define the \emph{friends-and-strangers graph} of $X$ and $Y$, denoted $\FS(X,Y)$, as follows. The vertex set of $\FS(X,Y)$ is the set of all bijections $\sigma: V(X) \to V(Y)$ from the vertex set of $X$ to the vertex set of $Y$; two bijections $\sigma$ and $\sigma'$ are connected by an edge if and only if $X$ contains an edge $\{a,b\}$ such that $\{\sigma(a),\sigma(b)\}$ is an edge in $Y$, $\sigma(a)=\sigma'(b)$, $\sigma(b)=\sigma'(a)$, and
$\sigma(c)=\sigma'(c)$ for all $c\in V(X)\setminus\{a,b\}$.
In other words, we connect $\sigma$ and $\sigma'$ if they differ only at a pair of adjacent vertices such that the images of these vertices under $\sigma$ are adjacent in $Y$.  In this case, the operation that transforms $\sigma$ into $\sigma'$ is called an \emph{$(X,Y)$-friendly swap}. We will sometime refer to this operation as an \emph{$(X,Y)$-friendly swap across $\{a,b\}$} when we wish to specify the edge of $X$ over which the swap takes place. 

The friends-and-strangers graph $\FS(X,Y)$ has the following non-technical interpretation.  Identify $n$ different people with the vertices of $Y$.  Say that two such people are friends with each other if they are adjacent in $Y$, and say that they are strangers otherwise.  Now, suppose that these people are standing on the vertices of $X$ so that each vertex has exactly one person standing on it.  At each point in time, two friends standing on adjacent vertices of $X$ may swap places, but two strangers may not.  It is natural to ask how various configurations can be reached from other configurations when we allow multiple such swaps, and this is precisely the information that is encoded in $\FS(X,Y)$.  In particular, the connected components of $\FS(X,Y)$ correspond to the equivalence classes of mutually-reachable configurations.

This framework is quite general, and several special cases have received attention in the past in other contexts.  For instance, Stanley \cite{stanley} studied the connected components of $\FS(\mathsf{Path}_n, \mathsf{Path}_n)$; the graph $\FS(K_n, Y)$ is the Cayley graph of $\mathfrak S_n$ generated by the transpositions corresponding to edges of $Y$; the famous $15$-puzzle can be interpreted in terms of $\FS(\mathsf{Star}_{16},\mathsf{Grid}_{4\times 4})$; and Wilson \cite{wilson}, generalizing the $15$-puzzle, studied the connected components of $\FS(\mathsf{Star}_n,Y)$ for arbitrary graphs $Y$.  In \cite{friends}, the second and third authors established several general properties of $\FS(X,Y)$ and investigated necessary and sufficient conditions for $\FS(X,Y)$ to be connected. For arbitrary graphs $Y$, they also characterized the connected components of $\FS(\mathsf{Path}_n, Y)$ in terms of acyclic orientations of the complement of $Y$ and characterized the connected components of $\FS(\mathsf{Cycle}_n, Y)$ in terms of toric acyclic orientations (also called toric partial orders) of the complement of $Y$. In this paper, we will continue the theme of determining when $\FS(X,Y)$ is connected. While the article \cite{friends} focused on exact results of an enumerative/algebraic flavor, we will focus here on more probabilistic and extremal questions. 

\subsection{Main results}
Most previous work on $\FS(X,Y)$ has focused on the case where $X$ (or $Y$) is a particular highly-structured graph.  One natural and new question concerns the connected components of $\FS(X,Y)$ when $X$ and $Y$ are random graphs. We denote by $\mathcal G(n,p)$ the probability space of Erd\H{o}s-R\'{e}nyi random edge-subgraphs of the complete graph $K_n$ in which each edge appears with probability $p$.  If we choose $X$ and $Y$ independently from $\mathcal G(n,p)$, what values of $p$ guarantee that, with high probability, the graph $\FS(X,Y)$ is either connected or disconnected?  We answer this question by finding, up to a multiplicative factor of $n^{o(1)}$, the threshold for $p$ at which $\FS(X,Y)$ changes from disconnected with high probability to connected with high probability.  (As usual, we say that an event occurs \emph{with high probability} if its probability of occurring tends to $1$ as the size of the graph involved tends to $\infty$.) 

\begin{theorem}\label{thm:random}
Fix some small $\varepsilon>0$. Let $X$ and $Y$ be independently-chosen random graphs in $\mathcal G(n,p)$, where $p=p(n)$ depends on $n$. If \[p\leq\frac{2^{-1/2}-\varepsilon}{n^{1/2}},\] then $\FS(X,Y)$ is disconnected with high probability. If \[p\geq\frac{\exp(2(\log n)^{2/3})}{n^{1/2}},\] then $\FS(X,Y)$ is connected with high probability.
\end{theorem}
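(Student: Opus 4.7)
\medskip
\noindent\textbf{Plan.} The two halves of the theorem can be addressed independently.

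For the lower bound at $p \le (2^{-1/2}-\varepsilon)/n^{1/2}$, the plan is to produce, with high probability, an \emph{isolated vertex} of $\FS(X,Y)$: a bijection $\sigma\colon V(X)\to V(Y)$ from which no $(X,Y)$-friendly swap is available. Since $\sigma$ is a bijection, distinct unordered pairs $\{a,b\}\subseteq V(X)$ correspond to distinct unordered pairs $\{\sigma(a),\sigma(b)\}\subseteq V(Y)$, so the per-pair events ``both $\{a,b\}\in E(X)$ and $\{\sigma(a),\sigma(b)\}\in E(Y)$'' are jointly independent across pairs, yielding the clean first-moment estimate
\[
\mathbb{E}[N_0]=n!\,(1-p^2)^{\binom{n}{2}},
\]
which is enormous at our threshold. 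The next step is to upgrade this to a high-probability statement via a second-moment argument. Parametrizing pairs $\sigma,\sigma'$ by $\pi=\sigma'\sigma^{-1}$, the joint isolation probability depends only on $K(\pi)=\binom{f(\pi)}{2}+t(\pi)$, where $f(\pi)$ and $t(\pi)$ denote the numbers of fixed points and transpositions of $\pi$. The isolation events for distinct $\sigma$ are positively correlated through the shared $X$-edges, and taming this correlation is the delicate step: I expect the condition $np^2 < 1/2$ (equivalently, $p<2^{-1/2}/n^{1/2}$) to be precisely what controls the dominant contribution to the correlation sum, coming from permutations $\pi$ with many fixed points.

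For the upper bound at $p\ge\exp(2(\log n)^{2/3})/n^{1/2}$, the plan is to leverage the strong quasirandomness of $X$ and $Y$ at this density. I would find inside each of $X$ and $Y$ a rich spanning substructure---for instance a Hamilton cycle augmented by many chords---and then invoke the characterization from \cite{friends} of the components of $\FS(\mathsf{Cycle}_n,Y)$ via toric acyclic orientations of $\overline Y$ to reduce the connectedness of $\FS(X,Y)$ to a structural statement about $\overline Y$. Expansion of a quasirandom $Y$ should then force all toric acyclic orientations of $\overline Y$ to lie in a single toric-equivalence class, yielding connectedness of $\FS(X,Y)$. The unusual form $\exp(2(\log n)^{2/3})$ points to an iterative ``sort-and-clean'' procedure run over roughly $(\log n)^{1/3}$ rounds, each costing a multiplicative factor of $\exp((\log n)^{2/3})$ in density.

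The principal obstacle is the upper bound. The lower bound is at heart a (subtle) moment calculation on a single combinatorial statistic, whereas the upper bound demands a genuinely constructive argument that the enormous graph $\FS(X,Y)$, of order $n!$, is connected at an edge density only marginally above $n^{-1/2}$. Arranging an iterative scheme that sorts almost every bijection through enough rounds without the accumulated failure probability overwhelming the per-round density gains is the tradeoff that I expect dictates the precise threshold $\exp(2(\log n)^{2/3})/n^{1/2}$, and this balancing act is where the bulk of the work will lie.
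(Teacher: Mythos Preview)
Your second-moment plan for isolated vertices is plausible and may well go through, but it is not the route the paper takes and it is considerably more work than necessary. The paper simply quotes the Sauer--Spencer packing theorem: if $2\Delta(X)\Delta(Y)<n$ then there is a bijection $\sigma$ with no $(X,Y)$-friendly swap, i.e.\ an isolated vertex of $\FS(X,Y)$. Since w.h.p.\ $\Delta(X),\Delta(Y)=pn(1+o(1))$, the hypothesis $p\le(2^{-1/2}-\varepsilon)n^{-1/2}$ gives $2\Delta(X)\Delta(Y)<n$ w.h.p.\ and you are done in two lines. Your moment calculation would have to control the positively-correlated tail carefully (positive correlation pushes $\mathbb{E}[N_0^2]/(\mathbb{E}[N_0])^2$ the wrong way), and while I believe this is manageable, it is an avoidable detour.

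\textbf{Upper bound.} Here there is a genuine gap. The toric-acyclic-orientation characterization applies only to $\FS(\mathsf{Cycle}_n,Y)$, and at $p\approx n^{-1/2}$ the complement $\overline{Y}$ has edge density $1-o(1)$, hence essentially $n!$ acyclic orientations falling into $(n-1)!$-many toric classes. So $\FS(\mathsf{Cycle}_n,Y)$ is wildly disconnected, and the ``chords'' of your Hamilton cycle must do almost all of the merging---at which point you have no characterization left to invoke and are back to the original problem. Your ``iterative sort-and-clean over $(\log n)^{1/3}$ rounds'' is too vague to assess, and in any event does not reflect where the $(\log n)^{2/3}$ actually comes from.

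The paper's argument is entirely different. It reduces connectedness (via Lemma~\ref{lem:exchangeable-K_n}) to showing that any $u,v\in V(Y)$ with $\{\sigma^{-1}(u),\sigma^{-1}(v)\}\in E(X)$ are $(X,Y)$-exchangeable from $\sigma$. To certify exchangeability it builds, once and for all, a fixed pair of ``gadget'' graphs $(G^*,H^*)$ on $m+2$ vertices with $m=\lfloor(\log n)^{2/3}\rfloor$: $H^*$ is a star with two small pendant fans, and $G^*$ is a long cycle with a handful of carefully placed chords. Repeated applications of Wilson's theorem (Theorem~\ref{thm:wilson}) show that the two marked vertices of this gadget are exchangeable from the identity. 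The heart of the proof is then a Janson-inequality embedding lemma (Lemma~\ref{Lem:Embeddable}) which shows that, w.h.p., for \emph{every} choice of $\sigma,u,v$ one can simultaneously embed $G^*$ into $X$ and $H^*$ into $Y$ so that the embeddings intertwine via $\sigma$ and the two marked vertices land on $u,v$. The value $m=(\log n)^{2/3}$ is exactly what balances the competing constraints in the Janson computation (Lemma~\ref{Lem1}); it is not the number of rounds of any iterative scheme.
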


The first inequality in this theorem comes from the threshold for $\FS(X,Y)$ having isolated vertices; it seems that (as in the usual case of a binomial random graph) this local obstruction to connectedness tells essentially the whole story.

An \emph{Erd\H{o}s-R\'enyi random edge-subgraph of $K_{r,r}$ with edge probability $p$} is an edge-subgraph of the complete bipartite graph $K_{r,r}$ in which each edge appears with probability $p$ and the events that different edges appear are independent. Let $\mathcal G(K_{r,r},p)$ be the probability space of these random graphs. In Proposition~\ref{prop:bipartite}, we will see that there is a simple parity obstruction that keeps $\FS(X,Y)$ from being connected if both $X$ and $Y$ are $n$-vertex bipartite graphs for $n\geq 3$. One might wonder when $\FS(X,Y)$ has exactly $2$ components if $X$ and $Y$ are independently-chosen graphs in $\mathcal G(K_{r,r},p)$. 

\begin{theorem}\label{thm:randombipartite}
Fix some small $\varepsilon>0$. Let $X$ and $Y$ be independently-chosen random graphs in $\mathcal G(K_{r,r},p)$, where $p=p(r)$ depends on $r$. If \[p\leq\frac{1-\varepsilon}{r^{1/2}},\] then $\FS(X,Y)$ has more than $2$ connected components with high probability. If \[p\geq\frac{5 (\log r)^{1/10}}{r^{3/10}},\] then $\FS(X,Y)$ has exactly $2$ connected components with high probability.
\end{theorem}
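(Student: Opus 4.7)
The plan is to show that, under the hypothesis $p \le (1-\varepsilon)/\sqrt r$, the graph $\FS(X,Y)$ contains an isolated vertex with high probability. Since each of the two parity classes guaranteed by Proposition~\ref{prop:bipartite} contains $(2r)!/2 > 1$ bijections once $r \ge 2$, the presence of any isolated bijection $\sigma$ forces at least three components: $\{\sigma\}$, at least one more component in the same parity class, and at least one in the other parity class. Let $Z$ count the isolated bijections $\sigma$ satisfying $\sigma(A_X) = B_Y$, where $A_X \sqcup B_X$ and $A_Y \sqcup B_Y$ are the bipartitions of $K_{r,r}$. For such $\sigma$ the map $\{a,b\} \mapsto \{\sigma(a), \sigma(b)\}$ is a bijection between the $r^2$ candidate edges of $X$ and those of $Y$, so the $r^2$ ``threat'' events are independent with probability $p^2$ each, and $P(\sigma \text{ isolated}) = (1-p^2)^{r^2}$. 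Summing over the $(r!)^2$ such $\sigma$ and using Stirling, $\mathbb E[Z] \ge \exp\bigl(2r\log r - (2 + (1-\varepsilon)^2) r + O(\log r)\bigr) \to \infty$.

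The bulk of the work is the second moment. For distinct $\sigma, \tau$ in our family, let $\pi = \tau \circ \sigma^{-1}$, a permutation of $V(Y)$ preserving each part, and let $f_A, f_B$ be its numbers of fixed points on $A_Y, B_Y$. Conditioning on $Y$, both $\sigma$ and $\tau$ are isolated precisely when all $X$-pairs in the set $D_\sigma(Y) \cup D_\tau(Y)$ of size $2|E(Y)| - |D_\sigma(Y) \cap D_\tau(Y)|$ are absent from $X$, and the overlap $|D_\sigma(Y) \cap D_\tau(Y)|$ is (stochastically) small unless $f_A f_B$ is large. Grouping pairs $(\sigma,\tau)$ by the cycle type of $\pi$, one would show that the pairs with $f_A, f_B = O(1)$ dominate the sum and contribute $(1+o(1))\mathbb E[Z]^2$. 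The main obstacle is controlling two sources of correlation — ``matched'' $X$-pairs with $\{\sigma(a),\sigma(b)\} = \{\tau(a),\tau(b)\}$ (of which there are $f_A f_B$) and ``cross-coincidences'' $\{\sigma(a),\sigma(b)\} = \{\tau(a'),\tau(b')\}$ with $\{a,b\} \ne \{a',b'\}$ — both of which I would handle by a case analysis on the cycle structure of $\pi$ in the spirit of the classical second-moment proofs of isolated-vertex thresholds for $\mathcal G(n,p)$.

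\textbf{Upper bound.} Here the task is to show that each parity class is a single connected component w.h.p.\ when $p \ge 5(\log r)^{1/10}/r^{3/10}$. My plan is a multi-stage exposure: write $X = X_1 \cup X_2$ (and similarly $Y = Y_1 \cup Y_2$) as independent unions from $\mathcal G(K_{r,r}, q)$ with $1 - (1-q)^2 = p$, so $q = \Theta(p)$. Use the first stage to guarantee that $X_1$ and $Y_1$ have strong structural properties — such as containing a Hamiltonian cycle, having minimum degree of order $qr = r^{7/10+o(1)}$, and containing many $4$-cycles through every vertex — from which one can, via deterministic sequences of friendly swaps, reduce any bijection to a single canonical representative in its parity class. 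The second stage is then used as a sprinkling: for any two canonical representatives in the same parity class, exhibit an explicit swap sequence that succeeds with probability at least $1 - \exp(-r^c)$, and take a union bound over pairs.

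The chief obstacle is the deterministic connectivity ingredient: one wants a structural property $\mathcal P$ of bipartite graphs forcing $\FS(X,Y)$ to have only two components, with $\mathcal P$ weak enough to hold w.h.p.\ at the stated density. The polynomial gap between the lower and upper thresholds strongly suggests that the sharp threshold is $r^{-1/2+o(1)}$ and that the exponent $3/10$ is an artifact of the specific combinatorial tools — likely chaining $4$-cycle rotations and using rich $3$-step neighborhood structure between vertices of opposite parity — rather than a true probabilistic threshold. I would therefore expect the calculation to balance the density needed for such local configurations to appear at every vertex against the density needed for a union-bound sprinkling argument to close, and the optimum to give the exponent $3/10$.
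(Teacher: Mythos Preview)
Your second-moment plan is plausible in outline, but it is both incomplete and unnecessary. The paper does not compute any moments at all: it simply invokes the Sauer--Spencer packing theorem, which says that if $2\Delta(G)\Delta(H)<n$ then $\FS(G,H)$ has an isolated vertex. Since $\Delta(X),\Delta(Y)=pr(1+o(1))$ with high probability and $2p^2r^2(1+o(1))\le 2(1-\varepsilon)^2 r(1+o(1))<2r$, an isolated vertex exists deterministically once the degree condition holds. This bypasses the whole variance calculation. Your sketch leaves the second moment undone, and the correlation structure you would have to control (overlaps of $D_\sigma(Y)$ and $D_\tau(Y)$ governed by the full cycle type of $\pi$, not just the fixed-point counts) is genuinely delicate; there is no reason to go through it when a one-line citation suffices.

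\textbf{Upper bound.} Here there is a real gap: your plan does not identify any mechanism that would actually produce exchangeability, and the paper's argument is organised around a completely different idea. The paper does not use sprinkling, Hamiltonian cycles, or $4$-cycle rotations. Instead, it fixes four explicit pairs of $8$-vertex graphs $(G^*_{\mathrm R},H^*_{\mathrm R})$ for $\mathrm R\in\{\mathrm I,\mathrm{II},\mathrm{III},\mathrm{IV}\}$, verifies by computer that vertices $7$ and $8$ are $(G^*_{\mathrm R},H^*_{\mathrm R})$-exchangeable from the identity, and then uses a Janson-inequality embedding lemma to show that w.h.p.\ these gadgets embed into $(X,Y)$ at every location one needs. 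The four pairs are tailored to the four possible ways the neighborhoods of $u,v,\sigma^{-1}(u),\sigma^{-1}(v)$ can split across the bipartitions. Once universal exchangeability is established, Proposition~\ref{lem:exchangeable} is applied twice to reduce first to $\FS(K_{r,r},Y)$ and then to $\FS(K_{r,r},K_{r,r})$, which has exactly two components. The exponent $3/10$ arises precisely from optimising the Janson bound over subsets of the six-vertex gadget graphs, not from any sprinkling trade-off. Your proposal does not get near this; ``reduce any bijection to a canonical representative via deterministic swap sequences'' is exactly the hard part, and nothing in your outline says how to do it in the bipartite setting where Wilson's theorem is unavailable.
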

  
Next, from a more extremal point of view, we examine what minimum-degree condition on $X$ and $Y$ suffices to guarantee the connectedness of $\FS(X,Y)$.  Let $\delta(G)$ denote the minimum degree of the graph $G$.

\begin{theorem}\label{Thm:mindegree}
For each $n\geq 1$, let $d_n$ denote the smallest nonnegative integer such that whenever $X$ and $Y$ are $n$-vertex graphs with $\delta(X)\geq d_n$ and $\delta(Y)\geq d_n$, the friends-and-strangers graph $\FS(X,Y)$ is connected. We have \[d_n\geq \frac{3}{5}n-2.\] If $n\geq 16$, then \[d_n \leq \frac{9}{14}n+2.\] 
\end{theorem}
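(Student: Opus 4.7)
The plan is to handle the lower and upper bounds separately, as they require quite different techniques.

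\textbf{Lower bound.} The plan is to construct explicit $n$-vertex graphs $X, Y$ with $\delta(X), \delta(Y)$ just below $3n/5$ whose friends-and-strangers graph is disconnected. A natural construction is to specify $\overline X$ and $\overline Y$ as rigid sparse graphs: for instance, each as a disjoint union of two cliques of size roughly $2n/5$, padded out with a few isolated vertices to reach exactly $n$ vertices. Since each vertex has degree $2n/5 + O(1)$ in the complement, we get $\delta(X), \delta(Y) \geq 3n/5 - O(1)$. The obstruction to connectedness would then come from the following idea: because friendly swaps demand simultaneous edges in $X$ and in $Y$, tokens corresponding to one of the ``missing cliques'' of $Y$ can only be rearranged in very constrained ways on $X$. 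One would then exhibit a combinatorial invariant, preserved by every friendly swap, whose value changes between two explicitly chosen bijections---for example, a count modulo a small integer of how the positions in $X$ hosting the members of one missing clique of $\overline Y$ are distributed across the missing cliques of $\overline X$. Verifying that this quantity is a true invariant is a direct case analysis on the available swaps.

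\textbf{Upper bound.} Assume $\delta(X), \delta(Y) \geq 9n/14 + 2$, so that $\Delta(\overline X), \Delta(\overline Y) \leq 5n/14 - 3$. The plan is to show any two bijections are connected in $\FS(X,Y)$ by combining a ``spanning subgraph'' reduction with a generation argument. First, Dirac's theorem ($\delta(X) > n/2$) yields a Hamilton cycle $C \subseteq X$, so $\FS(C, Y)$ is a spanning subgraph of $\FS(X, Y)$, and by the toric acyclic orientation characterization from \cite{friends}, $\FS(C, Y)$ is connected if and only if $\overline Y$ has a unique toric equivalence class of acyclic orientations. The bulk of the work is then to show that the sparsity hypothesis $\Delta(\overline Y) \leq 5n/14 - 3$ already forces this uniqueness property, presumably by arguing that any two acyclic orientations of such a sparse $\overline Y$ can be transformed into each other via a sequence of source-to-sink flips supported in $\overline Y$. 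Where a Hamilton cycle alone is insufficient, one would use the extra edges of $X$ outside $C$: each new edge of $X$ contributes additional swaps, and the min-degree hypothesis ensures enough such edges sit in useful positions (e.g.\ forming chords that allow one to slide tokens across obstructions).

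\textbf{Main obstacle.} The crux, I expect, is the extremal step linking the max-degree bound on $\overline Y$ to the uniqueness of the toric equivalence class of acyclic orientations of $\overline Y$. The coefficient $5/14$ (equivalently $9/14$ on the original side) is suspiciously specific, suggesting that the tight constant is forced by a particular family of near-extremal configurations in $\overline Y$ for which any weaker bound would fail. Identifying these extremal obstructions and arguing that the sparsity hypothesis rules them out---while simultaneously allowing the chord-based token sliding in $X$ to handle any residual difficulty---is where the argument must do its real work, and is likely what forces the additive constant $+2$ and the mild hypothesis $n \geq 16$ needed for the upper bound to take effect.
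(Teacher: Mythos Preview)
Your lower-bound construction differs from the paper's and, as stated, is unlikely to work. Taking $\overline X$ and $\overline Y$ each to be two cliques of size $\approx 2n/5$ plus $\approx n/5$ isolated vertices makes both $X$ and $Y$ contain $\approx n/5$ \emph{universal} vertices; once a universal $Y$-label sits on a universal $X$-position it can swap with anything, and it is hard to see what invariant could survive the resulting friendly swaps (you leave the invariant unspecified). The paper's construction is different and cleaner: partition $[n]$ into five parts $A_1,\dots,A_5$ of size $\approx n/5$, put $\{x,y\}\in E(X)$ iff the indices of their parts differ by $0$ or $\pm 1\pmod 5$, and put $\{x,y\}\in E(Y)$ iff they differ by $0$ or $\pm 2\pmod 5$. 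Then an edge lies in both $X$ and $Y$ only when both endpoints are in the same part, so every $(X,Y)$-friendly swap preserves the tuple $(\sigma(A_1),\dots,\sigma(A_5))$, and the identity's component cannot reach any bijection that moves a vertex between parts.

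Your upper-bound plan has a concrete gap. The key step---that $\Delta(\overline Y)\le 5n/14-3$ forces $\overline Y$ to have a unique toric equivalence class of acyclic orientations, hence $\FS(\mathsf{Cycle}_n,Y)$ connected---is false. Take $\overline Y$ to be a single triangle on $\{a,b,c\}$ together with $n-3$ isolated vertices, so $\delta(Y)=n-3$. On $\mathsf{Cycle}_n$ the labels $a,b,c$ can never pass one another, so their relative cyclic order is an invariant and $\FS(\mathsf{Cycle}_n,Y)$ has at least two components. Thus reducing to a Hamilton cycle throws away exactly the structure of $X$ that is needed, and your fallback (``use the chords of $X$ outside $C$'') would have to carry the entire argument; as written it is not a plan. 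The paper does not go through cycles or toric orientations at all. It shows directly, via Lemma~\ref{lem:exchangeable-K_n}, that for every $\sigma$ and every $X$-edge $\{u',v'\}$ the labels $u=\sigma(u')$, $v=\sigma(v')$ are $(X,Y)$-exchangeable from $\sigma$. The exchange is produced by locating $w\in N(u)\cap N(v)$ and $x$ with $\sigma^{-1}(x)\in N(u')\cap N(v')$, and then either writing down an explicit swap sequence through $w$ and $x$, or (in the hard case) extracting two disjoint Wilsonian induced subgraphs of $X|_{\sigma^{-1}(N[w])}$, each of size between $2n/7$ and $3n/7$, and iterating the argument one level deeper with Wilson's Theorem~\ref{thm:wilson} as the engine (Lemma~\ref{Lem:9/14}). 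The constant $9/14$ emerges from an inequality balancing the sizes of these pieces against neighbourhood intersections, not from any count of acyclic orientations.
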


We remark that the threshold at which isolated vertices disappear is lower than the threshold at which $\FS(X,Y)$ becomes connected. Indeed, it is not difficult to see that the graph $\FS(X,Y)$ cannot have isolated vertices if $\delta(X)\geq n/2$ and $\delta(Y)\geq n/2$. 

Of course, we can ask the same question in the case where $X$ and $Y$ are bipartite. In this case, we are able to obtain upper and lower bounds that are extremely close to each other.

\begin{theorem}\label{Thm:bipartitemindegree}
For each $r\geq 2$, let $d_{r,r}$ be the smallest nonnegative integer such that whenever $X$ and $Y$ are edge-subgraphs of $K_{r,r}$ with $\delta(X)\geq d_{r,r}$ and $\delta(Y)\geq d_{r,r}$, the friends-and-strangers graph $\FS(X,Y)$ has exactly $2$ connected components. We have \[\left\lceil\frac{3r+1}{4}\right\rceil\leq d_{r,r}\leq\left\lceil\frac{3r+2}{4}\right\rceil.\]
\end{theorem}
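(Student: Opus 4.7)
The plan is to prove the two inequalities separately, and I expect the bulk of the work to lie in the upper bound.

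\textbf{Lower bound.} For $d_{r,r} \geq \lceil(3r+1)/4\rceil$, I would construct explicit $X = Y \subseteq K_{r,r}$ with minimum degree $\lceil(3r+1)/4\rceil - 1$ such that $\FS(X,Y)$ has strictly more than two components. The natural candidate is obtained from $K_{r,r}$ by deleting a complete bipartite subgraph: partition the two sides of the bipartition as $A = A_1 \sqcup A_2$ and $B = B_1 \sqcup B_2$ with $|A_1| = |B_1| = s$, where $s = r - (\lceil(3r+1)/4\rceil - 1) \approx r/4$, and remove all edges of $K_{A_1, B_1}$. This yields $\delta(X) = r - s = \lceil(3r+1)/4\rceil - 1$. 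The heart of the lower bound is then to exhibit a \emph{secondary} conserved quantity of $(X,Y)$-friendly swaps, beyond the parity invariant that already gives at least two components in the bipartite setting. I expect this invariant to be a $\bmod\,2$ count encoding how $\sigma$ distributes the distinguished subsets $A_1, B_1$ across themselves on the $X$- and $Y$-sides; its invariance under swaps can then be verified by a short case analysis enumerating the types of edges of $X$ across which a swap can occur.

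\textbf{Upper bound.} For $d_{r,r} \leq \lceil(3r+2)/4\rceil$, I would show that $\delta(X), \delta(Y) \geq \lceil(3r+2)/4\rceil$ forces $\FS(X,Y)$ to have exactly two components. The bipartite parity obstruction gives at least two components, so the task is to prove at most two, i.e., that each parity class is a single connected component. The plan is to pick canonical representatives of the two parity classes and show that every bijection reduces to one of them by a sequence of valid swaps. The principal tool is a \emph{$4$-cycle gadget}: whenever a $4$-cycle $a_1 b_1 a_2 b_2$ in $X$ has image $\sigma(a_1)\sigma(b_1)\sigma(a_2)\sigma(b_2)$ that is itself a $4$-cycle in $Y$, one can cyclically rotate the four tokens and thereby simulate useful permutations of bijections. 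The density hypothesis $\delta \geq \lceil(3r+2)/4\rceil > 3r/4$ guarantees that such gadgets are plentiful in both $X$ and $Y$; I would combine these rotations with short preparatory sequences of swaps (used to align the images so that a compatible gadget exists) to establish transitivity on each parity class.

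\textbf{Main obstacle.} The principal difficulty is the upper bound, where the density threshold is essentially tight: one must leverage the minimum-degree condition delicately to guarantee the existence and applicability of the $4$-cycle gadgets across \emph{all} configurations, not merely generic ones. The argument will likely require case analysis based on how $\sigma$ distributes the bipartition of $V(Y)$ among the two sides of $V(X)$, or alternatively an induction on $r$ in which a carefully chosen pair of assignments is peeled off so that the smaller residual instance still satisfies the hypothesis and feeds into the gadget construction. The remarkably small gap between the two bounds (at most one) indicates that both the lower-bound construction and the upper-bound connectivity argument must be nearly optimal, and the precise constants in $\lceil(3r+2)/4\rceil$ reflect the exact density needed to support the $4$-cycle gadget argument.
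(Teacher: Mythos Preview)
Your high-level decomposition (lower bound via construction, upper bound via a gadget argument) is right, but both halves diverge from the paper in ways that matter, and the lower bound in particular has a real gap.

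\textbf{Lower bound.} You propose taking $X=Y$ to be $K_{r,r}$ minus a $K_{s,s}$ with $s\approx r/4$, and then discovering a secondary $\bmod\,2$ invariant beyond the parity obstruction. You never exhibit this invariant, and I do not believe one exists for that particular $X=Y$: the graph $K_{r,r}-K_{s,s}$ is highly symmetric and it is far from clear that $\FS(X,X)$ has more than two components. The paper's construction is both different and simpler: it builds \emph{distinct} $X$ and $Y$, arranged so that a specific bijection $\sigma_0$ is an \emph{isolated vertex} of $\FS(X,Y)$. Concretely, each partite set is split in half, $X$ and $Y$ each contain the two ``diagonal'' complete bipartite pieces, and the remaining halves of the edge sets are placed complementarily (an $A_Y$--$D_Y$ edge is present in $Y$ exactly when the corresponding $A_X$--$D_X$ edge is absent from $X$). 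No invariant is needed: one just checks that no single swap is available from $\sigma_0$. Your approach would require inventing and verifying a new conserved quantity, which is substantially harder and, as stated, is only a hope.

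\textbf{Upper bound.} Your $4$-cycle gadget idea is in the right spirit but misses the organizing framework. The paper does not directly reduce arbitrary bijections to canonical representatives. Instead it proves an \emph{exchangeability} statement: for any $\sigma$ and any $u,v$ in different partite sets of $Y$ with $\{\sigma^{-1}(u),\sigma^{-1}(v)\}\in E(X)$, the vertices $u,v$ are $(X,Y)$-exchangeable from $\sigma$. This is then fed twice into a general lemma (Proposition~\ref{lem:exchangeable}) to reduce first from $\FS(X,Y)$ to $\FS(X,K_{r,r})$ and then to $\FS(K_{r,r},K_{r,r})$, which has two components by a separate proposition. The exchange itself is accomplished not by a bare $4$-cycle but by a $5$-vertex configuration $u,v,w,z_1,z_2$ together with an explicit $9$-swap sequence; reaching a bijection where such $z_1,z_2$ exist requires a phase of ``preliminary swaps'' that gradually rearrange $\sigma$ without touching $u,v,w$. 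Your sketch does not anticipate this preparatory phase, which is where the threshold $\lceil (3r+2)/4\rceil$ is actually used (several pigeonhole counts of the form $(r-\delta)+(r-\delta)+\text{something}<r$).
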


The lower and upper bounds for $d_{r,r}$ in Theorem~\ref{Thm:bipartitemindegree} differ by $1$ when $r\equiv 1\pmod 4$ and are equal when $r\not\equiv 1\pmod 4$. The lower bound is actually the cutoff for $\FS(X,Y)$ to avoid isolated vertices; unlike in the non-bipartite case, this cutoff is essentially the same as the cutoff for $\FS(X,Y)$ to become connected (even though we do not know how to prove that the cutoffs are exactly the same when $r\equiv 1\pmod 4$).

\subsection{Structure of the paper}

In Section~\ref{Sec:Preliminaries}, we fix notation and terminology and establish several important facts concerning friends-and-strangers graphs that will be used throughout the rest of the article. Sections~\ref{Sec:Random}, \ref{Sec:RandomBipartite}, \ref{Sec:MinDegree}, and \ref{Sec:MinDegreeBipartite} are devoted to proving Theorems~\ref{thm:random}, \ref{thm:randombipartite}, \ref{Thm:mindegree}, and \ref{Thm:bipartitemindegree}, respectively. In Section~\ref{Sec:Conclusion}, we raise several open questions and conjectures. 

\section{Preliminaries}\label{Sec:Preliminaries}

We begin by recording several results, definitions, and preliminary observations. For additional information about friends-and-strangers graphs, we refer the reader to \cite{friends}. 

\subsection{Basic notation and terminology}
\begin{itemize}
\item We write $[n]$ for the set $\{1,\ldots,n\}$.
\item All graphs in this paper are assumed to be simple. We let $V(G)$ and $E(G)$ denote the vertex set and edge set, respectively, of a graph $G$. 
\item We let $\delta(G)$ denote the minimum degree of a graph $G$.
\item If $G$ is a graph and $S\subseteq V(G)$, then $G \vert_S$ denotes the induced subgraph of $G$ on $S$.
\item We let $K_n$ denote the complete graph on $n$ vertices and let $K_{r,s}$ denote the complete bipartite graph with partite sets of sizes $r$ and $s$. Let $\mathsf{Star}_n$ denote the star graph $K_{1,n-1}$. 
\item If $G$ is a graph and $v$ is one of its vertices, then $N(v)=\{u \in V(G): \{v,u\} \in E(G)\}$ denotes the \emph{open neighborhood} of $v$ and $N[v]=N(v) \cup \{v\}$ denotes the \emph{closed neighborhood} of $v$.  If $S\subseteq V(G)$, then we similarly define the open and closed neighborhoods $N(S)=\bigcup_{v \in S}N(v)$ and $N[S]=\bigcup_{v \in S}N[v]$, respectively.
\item If $\Sigma$ is a finite sequence, then $\rev(\Sigma)$ denotes the reverse of $\Sigma$. 
\end{itemize}

Let $X$ and $Y$ be $n$-vertex graphs, and let $\{u,v\}$ be an edge in $Y$. Suppose $\sigma,\sigma':V(X)\to V(Y)$ are bijections such that $\sigma'$ is obtained from $\sigma$ by performing an $(X,Y)$-friendly swap across the edge $\{\sigma^{-1}(u),\sigma^{-1}(v)\}\in E(X)$. (Recall that this means that $\sigma$ and $\sigma'$ agree on $V(X)\setminus\{\sigma^{-1}(u),\sigma^{-1}(v)\}$ and disagree on the set $\{\sigma^{-1}(u),\sigma^{-1}(v)\}$.) It will be convenient to refer to the $(X,Y)$-friendly swap transforming $\sigma$ into $\sigma'$ by writing simply $uv$. If $\sigma_0, \sigma_1, \ldots, \sigma_r$ is a sequence of bijections, where each $\sigma_i$ is obtained from $\sigma_{i-1}$ by the $(X,Y)$-friendly swap $u_iv_i$, then we say $u_1v_1, u_2v_2, \ldots, u_rv_r$ is a sequence of $(X,Y)$-friendly swaps that transforms $\sigma$ into $\sigma'$. We say this sequence of swaps \emph{involves} the vertices $u_1,v_1,u_2,v_2,\ldots,u_r,v_r$ (and does not involve vertices that are not in this list). 

\begin{example}\label{Exam1}
Let \[X=\begin{array}{l}\includegraphics[height=.77cm]{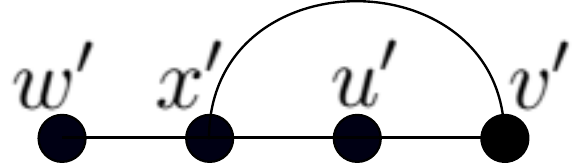}\end{array}\quad\text{and}\quad Y=\begin{array}{l}\includegraphics[height=1.45cm]{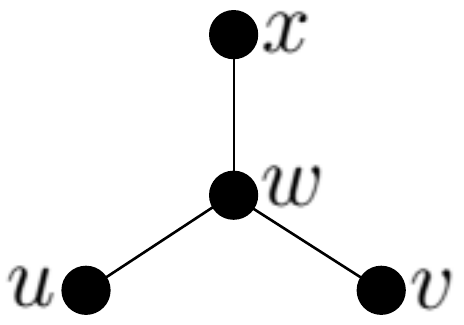}\end{array}.\] Let us denote a bijection $\tau:V(X)\to V(Y)$ by a diagram of $X$ with each vertex $a$ also labeled by $\tau(a)$ in red. For instance, the bijection $\sigma$ defined by $\sigma(u')=u$, $\sigma(v')=v$, $\sigma(w')=w$, and $\sigma(x')=x$ is represented by \[\begin{array}{l}\includegraphics[height=1.08cm]{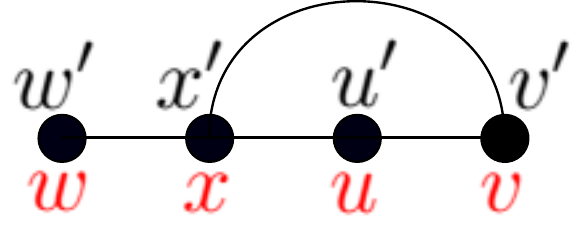}\end{array}.\] The sequence $\Sigma=wx,wu,wv,wu,wx$ of $(X,Y)$-friendly swaps transforms $\sigma$ into the bijection $\sigma'$ defined by $\sigma'(u')=v$, $\sigma'(v')=u$, $\sigma'(w')=w$, and $\sigma'(x')=x$. The application of this sequence to $\sigma$ can be represented pictorially as 
\begin{align*}
&\begin{array}{l}\includegraphics[height=1.08cm]{RandomFriendsPIC8}\end{array}
\xrightarrow{wx}
\begin{array}{l}\includegraphics[height=1.08cm]{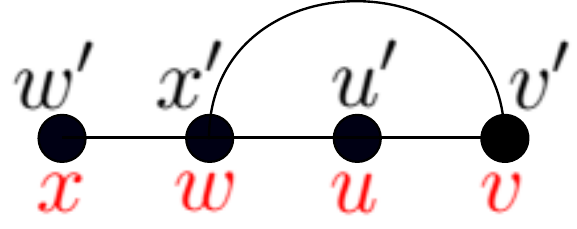}\end{array}
\xrightarrow{wu}
\begin{array}{l}\includegraphics[height=1.08cm]{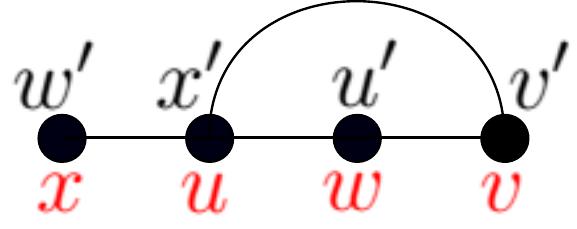}\end{array}\\
\xrightarrow{wv}
&\begin{array}{l}\includegraphics[height=1.08cm]{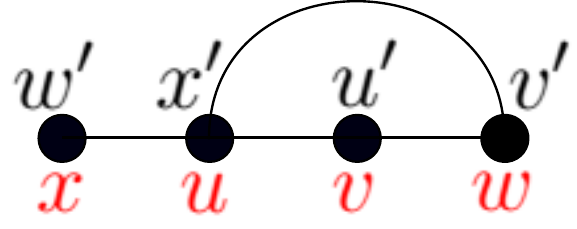}\end{array}
\xrightarrow{wu}
\begin{array}{l}\includegraphics[height=1.08cm]{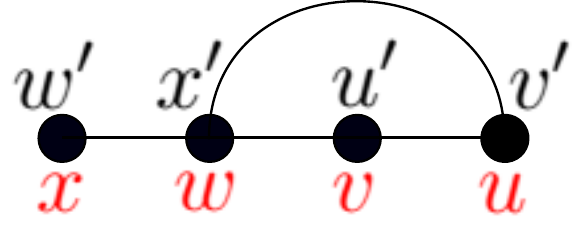}\end{array}
\xrightarrow{wx}
\begin{array}{l}\includegraphics[height=1.08cm]{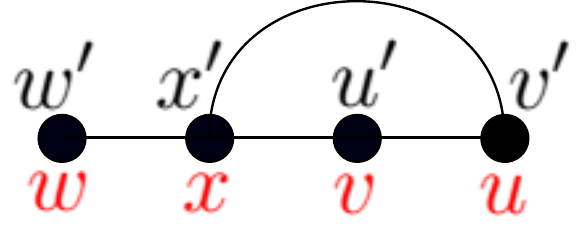}\end{array}.
\end{align*}
 \end{example}

It will be useful to keep in mind that the order of $X$ and $Y$ is somewhat irrelevant because the map $V(\FS(X,Y))\to V(\FS(Y,X))$ given by $\sigma\mapsto\sigma^{-1}$ is a graph isomorphism.

\subsection{Stars and Wilsonian graphs}
A graph $G$ is called \emph{biconnected} if for every $v\in V(G)$, the induced subgraph $G\vert_{V(G)\setminus\{v\}}$ obtained by deleting $v$ from $G$ is connected. We will make frequent use of one of Wilson's results \cite{wilson} about graphs of the form $\FS(\mathsf{Star}_n,Y)$. The statement of this result makes reference to the special graph \[\theta_0=\begin{array}{l}\includegraphics[height=1.38cm]{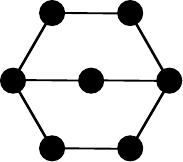}\end{array}.\]

\begin{theorem}[\!\!\cite{wilson}]\label{thm:wilson}
If $Y$ is a graph on $n$ vertices that is biconnected, not a cycle graph with at least $4$ vertices, not isomorphic to $\theta_0$, and not bipartite, then $\FS(\mathsf{Star}_n,Y)$ is connected. 
\end{theorem}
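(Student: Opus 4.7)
The plan is to reinterpret $\FS(\mathsf{Star}_n,Y)$ as a token-sliding puzzle on $Y$. If we designate the image of the center of the star as a \emph{hole}, then a bijection $\sigma\colon V(\mathsf{Star}_n)\to V(Y)$ corresponds to placing $n-1$ labeled tokens on $V(Y)\setminus\{\sigma(c)\}$, and each $(\mathsf{Star}_n,Y)$-friendly swap slides the hole across an edge of $Y$. Thus connectedness of $\FS(\mathsf{Star}_n,Y)$ is equivalent to the statement that every arrangement of labeled tokens is reachable from every other. Since $Y$ is biconnected (and hence connected), the hole can be routed to any desired vertex; fixing a base vertex $v_0$, it therefore suffices to show that the group $G \leq \mathfrak{S}_{n-1}$ of permutations of tokens realizable while starting and ending with the hole at $v_0$ is the full symmetric group.

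The basic tool is a \emph{cycle walk}: if $C=v_0v_1\cdots v_{k-1}v_0$ is a cycle in $Y$ and we slide the hole once around $C$, the net effect on the tokens is a $(k-1)$-cycle of the tokens originally at $v_1,\ldots,v_{k-1}$. In particular, a $4$-cycle of $Y$ produces a $3$-cycle of tokens, while an odd cycle of length $k$ produces a permutation of sign $(-1)^{k}=-1$. My strategy is to produce every $3$-cycle of tokens (which together generate $A_{n-1}$) and then use an odd cycle to escape the alternating group.

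To produce every $3$-cycle of tokens, I would use biconnectedness more strongly: for any vertex $v$, the graph $Y\setminus v$ is connected, so the hole can be routed between any two vertices along a path avoiding $v$. This lets us ``preload'' any three chosen tokens onto a suitable $4$-cycle of $Y$ before executing its cycle walk, and then undo the preloading routing afterwards; up to a careful accounting of the permutations induced by the routing phase (best handled by an inductive argument on $|V(Y)|$ or by commutator calculations), this gives every $3$-cycle in $G$. Hence $A_{n-1}\leq G$. Non-bipartiteness of $Y$ then furnishes an odd cycle whose cycle walk has sign $-1$, forcing $G=\mathfrak{S}_{n-1}$.

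The main obstacle is the preloading step: moving the hole along a path permutes the tokens along that path in an uncontrolled way, so the argument that any three tokens can genuinely be routed onto a common $4$-cycle (or onto three vertices lying on any suitable short cycle) requires more than a naive construction. This is also where the exceptional graphs enter. If $Y=\mathsf{Cycle}_n$ with $n\geq 4$, then $Y$ has no $4$-cycle and the only cycle walk is the single $(n-1)$-cycle itself, generating only a cyclic subgroup; if $Y\cong\theta_0$, a finite case analysis shows that the group generated is a proper subgroup of $\mathfrak{S}_{n-1}$. Both cases must therefore be excluded from the hypothesis, exactly as in the statement.
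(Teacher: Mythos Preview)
The paper does not supply its own proof of this theorem: it is quoted from Wilson's 1974 paper and explicitly flagged as a black box (``The proof of Theorem~\ref{thm:wilson} is quite involved and is algebraic in nature; we will often use the result as a black box\ldots''). So there is no in-paper argument to compare against; your sketch is being measured against Wilson's original, not against anything the present authors wrote.

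That said, your outline has the correct skeleton---the identification of $\FS(\mathsf{Star}_n,Y)$ with the sliding puzzle on $Y$, the reduction to showing that the puzzle group based at a fixed hole position is all of $\mathfrak{S}_{n-1}$, and the use of cycle walks to generate cycles of tokens are all the right moves---but two gaps keep it from being a proof. First, your mechanism for producing $3$-cycles of tokens assumes $Y$ contains a $4$-cycle, which is false in general: there are biconnected, non-bipartite graphs of arbitrarily large girth (e.g., odd-girth cages), none of which are cycles or $\theta_0$, so the hypothesis does not hand you any $4$-cycle to walk around. You need a different source of $3$-cycles, and in Wilson's argument this comes from a careful analysis of how two overlapping cycles interact (this is also where the $\theta_0$ exception genuinely bites, not merely as a finite check). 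Second, you yourself identify the preloading/routing step as ``the main obstacle'' and then do not resolve it; saying it is ``best handled by an inductive argument or commutator calculations'' is a description of where the work lies, not the work itself. Wilson's proof spends most of its effort precisely here, and the algebraic machinery the paper alludes to is what makes that step go through.
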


Theorem~\ref{thm:wilson} motivates the following definition. 

\begin{definition}\label{def:wilsonian}
We say a graph $G$ is \emph{Wilsonian} if it is biconnected, non-bipartite, and neither a cycle graph with at least $4$ vertices nor isomorphic to the exceptional graph $\theta_0$. 
\end{definition}

The proof of Theorem~\ref{thm:wilson} is quite involved and is algebraic in nature; we will often use the result as a black box in conjunction with the following sufficient condition for a graph to be Wilsonian (which is clear from the definitions).

\begin{lemma}\label{lem:wilsonian}
If $G$ is an $n$-vertex graph with minimum degree $\delta(G) > n/2$, then $G$ is Wilsonian.
\end{lemma}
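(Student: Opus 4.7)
The plan is to verify each of the four conditions making up Definition~\ref{def:wilsonian} (biconnected, non-bipartite, not a cycle on $\geq 4$ vertices, not $\theta_0$) directly from the hypothesis $\delta(G) > n/2$. The only condition requiring actual work is biconnectedness; the other three follow immediately from the observation that each forbidden structure has a low-degree vertex.

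For biconnectedness, I would use a standard degree-sum argument. Fix any $v \in V(G)$ and let $H = G\vert_{V(G)\setminus\{v\}}$, a graph on $n-1$ vertices. Every vertex $u$ of $H$ satisfies $\deg_H(u) \geq \deg_G(u) - 1 > n/2 - 1$. Given any two distinct vertices $u, w \in V(H)$, the sum $\deg_H(u) + \deg_H(w)$ strictly exceeds $n-2$, while $V(H) \setminus \{u, w\}$ has only $n-3$ elements. By pigeonhole either $u$ and $w$ are adjacent or their $H$-neighborhoods intersect, so any two vertices of $H$ lie at distance at most $2$. Thus $H$ is connected, proving that $G$ is biconnected.

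For the remaining properties: if $G$ were bipartite with parts of sizes $a \leq b$ summing to $n$, every vertex in the larger part would have degree at most $a \leq n/2$, contradicting the hypothesis; every cycle graph on $\geq 4$ vertices has minimum degree $2 \leq n/2$, which again contradicts the hypothesis; and $\theta_0$ is a fixed graph on a small constant number of vertices containing an internal vertex of degree $2$, so once $n \geq 4$ the inequality $\delta(G) > n/2 \geq 2$ rules out $G \cong \theta_0$. (For $n \leq 3$, $G$ is vacuously neither a long cycle nor $\theta_0$.)

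The only substantive computation is the pigeonhole step in the biconnectedness argument, and this is essentially the textbook proof that a graph on $m$ vertices with minimum degree $\geq m/2$ is connected. I do not anticipate any real obstacle in carrying out the plan.
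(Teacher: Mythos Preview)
Your verification is correct. The paper does not actually give a proof of this lemma; it simply remarks that the statement ``is clear from the definitions,'' and your argument is precisely the routine check one writes out to justify that remark.
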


\subsection{The case of bipartite $X$ and $Y$}
Given a finite set $A$, we can consider the symmetric group $\mathfrak S_A$ of all bijections $\sigma:A\to A$. We let $\sgn:\mathfrak S_A\to\mathbb Z/2\mathbb Z$ be the sign homomorphism, which is the unique group homomorphism satisfying $\sgn((a\,\, b))=1$ for every transposition $(a\,\,b)\in\mathfrak S_A$. Note that if $A$ and $B$ are two finite sets and $\sigma,\tau:A\to B$ are bijections, then $\sgn(\sigma^{-1}\circ\tau)$ has the same parity as the number of transpositions by which we must multiply $\sigma$ to obtain $\tau$. The following proposition (essentially Proposition 2.7 from \cite{friends}) says that $X$ and $Y$ being bipartite is an obstruction to $\FS(X,Y)$ being connected. 

\begin{proposition}\label{prop:bipartite}
Let $X$ and $Y$ be bipartite graphs on $n$ vertices with vertex bipartitions $\{A_X,B_X\}$ and $\{A_Y,B_Y\}$, respectively.  If the bijections $\sigma$ and $\tau$ are in the same connected component of $\FS(X,Y)$, then
$\sgn(\sigma^{-1}\circ\tau)$ has the same parity as $|\tau(A_X)\cap A_Y|-|\sigma(A_X)\cap A_Y|$. 
\end{proposition}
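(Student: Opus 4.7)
The plan is to establish the result by tracking a well-chosen $\mathbb{Z}/2\mathbb{Z}$-valued invariant along a single $(X,Y)$-friendly swap and then iterating along a path in $\FS(X,Y)$. For any bijection $\rho:V(X)\to V(Y)$, I will set $f(\rho)=|\rho(A_X)\cap A_Y|$. The goal is to show that whenever $\rho'$ is obtained from $\rho$ by a single $(X,Y)$-friendly swap, both $\sgn(\rho^{-1}\circ\rho')$ and $f(\rho')-f(\rho)$ are congruent to $1$ modulo $2$; the proposition will then follow immediately by summing over the steps of a path from $\sigma$ to $\tau$.

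For the sign side, if $\rho'$ is obtained from $\rho$ by swapping the values at adjacent vertices $a,b\in V(X)$, then viewing $(a\,b)$ as a transposition in $\mathfrak{S}_{V(X)}$ one has $\rho'=\rho\circ(a\,b)$, so $\rho^{-1}\circ\rho'=(a\,b)$ has odd sign. For the combinatorial side, I will use that $X$ is bipartite: since $\{a,b\}\in E(X)$, exactly one of $a,b$ lies in $A_X$, and I may assume $a\in A_X$ and $b\in B_X$. Then $A_X\setminus\{a\}\subseteq V(X)\setminus\{a,b\}$, so $\rho$ and $\rho'$ agree on $A_X\setminus\{a\}$, and therefore $\rho'(A_X)=(\rho(A_X)\setminus\{\rho(a)\})\cup\{\rho(b)\}$. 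Because $Y$ is also bipartite and $\{\rho(a),\rho(b)\}\in E(Y)$, exactly one of $\rho(a),\rho(b)$ belongs to $A_Y$, so $f(\rho')-f(\rho)\in\{+1,-1\}$, and in particular $f(\rho')-f(\rho)\equiv 1\pmod 2$.

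Given any path $\sigma=\sigma_0,\sigma_1,\ldots,\sigma_k=\tau$ in $\FS(X,Y)$, iterating the sign calculation shows that $\sigma^{-1}\circ\tau$ is a product of $k$ transpositions in $\mathfrak{S}_{V(X)}$, so $\sgn(\sigma^{-1}\circ\tau)\equiv k\pmod 2$. Iterating the combinatorial calculation gives $f(\tau)-f(\sigma)=\sum_{i=1}^{k}\bigl(f(\sigma_i)-f(\sigma_{i-1})\bigr)\equiv k\pmod 2$. Comparing the two congruences yields the claimed equality of parities.

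The argument is short, and I do not anticipate a genuine obstacle; the only place to be attentive is the $\pm 1$ computation, where both bipartiteness hypotheses are genuinely used — bipartiteness of $X$ to conclude that $\rho(A_X)$ and $\rho'(A_X)$ differ in exactly one element, and bipartiteness of $Y$ to conclude that this single element toggles membership in $A_Y$.
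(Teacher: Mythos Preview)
Your proof is correct and follows essentially the same approach as the paper: both arguments track, along each $(X,Y)$-friendly swap in a path from $\sigma$ to $\tau$, that the sign changes by one transposition while $|\rho(A_X)\cap A_Y|$ changes by exactly $\pm 1$, and then sum over the path. The paper's write-up phrases the $\pm 1$ step via a case split on whether $\sigma_{i-1}(a_i)\in A_Y$ or $B_Y$, but this is the same computation you carry out.
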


\begin{proof}
The bijection $\tau$ is obtained from $\sigma$ by performing a sequence of $(X,Y)$-friendly swaps across some edges $\{a_1,b_1\},\ldots,\{a_\ell,b_\ell\}$ (in this order) for some $a_1,\ldots,a_\ell\in A_X$ and $b_1,\ldots,b_\ell\in B_X$. Let $\sigma_0=\sigma$ and $\sigma_i=\sigma_{i-1}\circ(a_i\,\,b_i)$ for all $i\in[\ell]$, so that $\tau=\sigma_\ell$. If $\sigma_{i-1}(a_i)\in A_Y$, then $\sigma_i(a_i)=\sigma_{i-1}(b_i)\in B_Y$, so $\sigma_i(A_X)\cap A_Y=(\sigma_{i-1}(A_X)\cap A_Y)\setminus\{a_i\}$. If $\sigma_{i-1}(a_i)\in B_Y$, then $\sigma_i(a_i)=\sigma_{i-1}(b_i)\in A_Y$, so $\sigma_i(A_X)\cap A_Y=(\sigma_{i-1}(A_X)\cap A_Y)\cup\{a_i\}$. In each of these cases, $|\sigma_i(A_X)\cap A_Y|-|\sigma_{i-1}(A_X)\cap A_Y|$ is either $1$ or $-1$.  Hence, \[|\tau(A_X)\cap A_Y|-|\sigma(A_X)\cap A_Y|=\sum_{i=1}^\ell(|\sigma_i(A_X)\cap A_Y|-|\sigma_{i-1}(A_X)\cap A_Y|)\] has the same parity as $\ell$, which has the same parity as $\sgn(\sigma^{-1}\circ\tau)=\sgn((a_1\,\,b_1)\circ\cdots\circ(a_\ell\,\,b_\ell))$.
\end{proof}
If $X$ and $Y$ are $n$-vertex bipartite graphs with vertex bipartitions $\{A_X,B_X\}$ and $\{A_Y,B_Y\}$, respectively, then we say two bijections $\sigma,\tau:V(X)\to V(Y)$ are \emph{concordant} if $\sgn(\sigma^{-1}\circ\tau)$ has the same parity as $|\tau(A_X)\cap A_Y|-|\sigma(A_X)\cap A_Y|$. It is straightforward to check that concordance is an equivalence relation. An immediate consequence of Proposition~\ref{prop:bipartite} is that if $n\geq 3$, then $\FS(X,Y)$ has at least $2$ connected components. It is natural to ask when this parity obstruction is in fact the only obstacle preventing the graph $\FS(X,Y)$ from being connected. In other words, for various choices of bipartite graphs $X$ and $Y$, we are interested in when $\FS(X,Y)$ has exactly $2$ connected components. As a first step, we show that this is the case when $X$ and $Y$ are complete bipartite graphs that are not both stars. 

\begin{proposition}\label{prop:complete-bipartite}
Let $n \geq 4$, and let $r$ and $s$ be integers satisfying $1 \leq r \leq n-1$ and $2 \leq s \leq n-2$.  Then the graph $\FS(K_{r,n-r},K_{s,n-s})$ has exactly $2$ connected components.
\end{proposition}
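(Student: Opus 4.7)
The plan is to combine the lower bound from Proposition~\ref{prop:bipartite} with an inductive argument on $n$ showing each concordance class forms a single connected component of $\FS(X,Y)$. Since concordance has exactly two classes (for $n \geq 3$), this will pin the component count at exactly two. The base case $n = 4$ forces $s = 2$ and $r \in \{1, 2, 3\}$, and the isomorphism $\FS(X,Y) \cong \FS(Y,X)$ reduces this to two small graphs on $4! = 24$ vertices that can be verified directly.

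For the inductive step, given concordant bijections $\sigma$ and $\tau$, I would first establish a transitivity lemma: by applying $(X,Y)$-friendly swaps, one can transform $\sigma$ into some $\sigma'$ (in the same component as $\sigma$) with $\sigma'(v) = \tau(v)$ for a carefully chosen vertex $v \in V(X)$. The proof of the lemma exploits the completeness of $X = K_{r,n-r}$ to find short paths in $X$ routing the target element $\tau(v)$ from its current location $\sigma^{-1}(\tau(v))$ to $v$, together with the completeness of $Y = K_{s,n-s}$ to ensure each intermediate swap along the path is friendly (inserting an auxiliary ``clearing'' swap when the element currently at the next position happens to lie in the wrong part of $Y$). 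After matching at $v$, the restricted bijections $\sigma'|_{V(X) \setminus \{v\}}$ and $\tau|_{V(X) \setminus \{v\}}$ land in a smaller graph of the form $\FS(K_{r',n-1-r'}, K_{s',n-1-s'})$, since deleting a vertex of a complete bipartite graph produces another complete bipartite graph. These restrictions remain concordant in the smaller problem because $\sgn(\sigma'^{-1}\tau)$ is unchanged upon deleting the common fixed point $v$ from the domain, and the set-size difference $|\tau(A_X) \cap A_Y| - |\sigma'(A_X) \cap A_Y|$ is likewise preserved. The inductive hypothesis then supplies a connecting sequence of swaps in the smaller graph, and every such swap lifts to a valid $(X,Y)$-friendly swap.

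The principal obstacle is twofold. First, the transitivity lemma itself requires a careful case analysis based on which parts of $X$ and $Y$ contain $v$, $\tau(v)$, and $\sigma^{-1}(\tau(v))$; the argument is straightforward when these are generic, but one has to check that the needed ``clearing'' and rerouting moves are available even near the boundaries of the parameter range. Second, one must ensure the reduced instance still satisfies the hypotheses $n - 1 \geq 4$ and $2 \leq s' \leq (n-1) - 2$; a careless choice of $v$ could push $s'$ down to $1$ or up to $n - 2$. To avoid this, I would select $v$ based on where $\tau(v)$ lies: if $s = 2$, choose $v$ with $\tau(v) \in B_Y$ (possible because $|B_Y| = n - 2 \geq 2$) so that $s' = 2$; symmetric choices handle $s = n-2$, $r = 1$, and $r = n-1$. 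The small case $n = 5$ reduces directly to the base case.
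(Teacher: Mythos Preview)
Your approach is correct and shares the paper's overall inductive skeleton (induct on $n$, base case $n=4$, match the two bijections at one vertex and delete it), but the key step is organized differently. The paper first disposes of the star case $r\in\{1,n-1\}$ by citing Wilson's theorem for $\FS(\mathsf{Star}_n,K_{s,n-s})$, and for $2\le r\le n-2$ it avoids any explicit ``move a piece to a target position'' lemma: instead it picks $y_0$ with $\sigma^{-1}(y_0)\in A_X$ and $\tau^{-1}(y_0)\in B_X$, freely \emph{chooses} an intermediary $\widetilde\sigma$ concordant with $\sigma$ that already agrees with $\sigma$ at $\sigma^{-1}(y_0)$, applies the inductive hypothesis to connect $\sigma$ to $\widetilde\sigma$, performs a single swap to reach $\widetilde\tau$, and then applies induction a second time to connect $\widetilde\tau$ to $\tau$. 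So the paper uses induction twice plus one swap, whereas you use one explicit transitivity lemma plus induction once. Your route is more self-contained (no appeal to Wilson) and handles all $r$ uniformly via the choice of $v$, but the transitivity lemma genuinely needs the case analysis you flag---in particular, when $r=1$ the routing argument must use the $2$-connectedness of $K_{s,n-s}$ to steer the ``cursor'' at the star's center around the value currently sitting on $v$. The paper's double-induction trick sidesteps that analysis entirely at the cost of invoking Wilson and of running the inductive hypothesis twice; your approach trades that black box for a hands-on argument.
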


\begin{proof}
Let $\{A_X,B_X\}$ and $\{A_Y,B_Y\}$ be the vertex bipartitions of $X$ and $Y$, respectively. It suffices to prove that any two concordant vertices are in the same connected component of $\FS(X,Y)$. We will show this by induction on $n$. The case $n=4$ can be verified by hand (or computer), so assume $n\geq 5$.

If $r=1$ or $r=n-1$, then it follows from Theorem 1 of \cite{wilson} that $\FS(K_{1,n-1},K_{s,n-s})=\FS(\mathsf{Star}_{n},K_{s,n-s})$ has $2$ connected components. In this case, it follows from Proposition~\ref{prop:bipartite} that any two concordant vertices are in the same connected component. Hence, we may assume $2\leq r\leq n-2$. Let $\sigma,\tau:V(X)\to V(Y)$ be concordant bijections. Let us assume for the moment that $\sigma(A_X)\cap\tau(B_X)\neq\emptyset$. This implies that there exists a vertex $y_0\in V(Y)$ such that the vertex $x_0=\sigma^{-1}(y_0)$ is in $A_X$ and the vertex $x_1=\tau^{-1}(y_0)$ is in $B_X$ (hence, $\{x_0,x_1\}\in E(X)$). Let $y_1$ be a vertex that is adjacent to $y_0$ in $Y$. Let $\widetilde\sigma:V(X)\to V(Y)$ be a bijection that is concordant with $\widetilde\sigma$ and satisfies $\widetilde\sigma(x_0)=y_0$ and $\widetilde\sigma(x_1)=y_1$ (such a bijection exists because $n\geq 5$). Let $\widetilde\tau=\widetilde\sigma\circ(x_0\,\,x_1)$ be the bijection obtained by applying an $(X,Y)$-friendly swap across $\{x_0,x_1\}$ to $\widetilde\sigma$. Because $\widetilde\sigma$ is concordant with $\widetilde\tau$ (by Proposition~\ref{prop:bipartite}) and $\sigma$ is concordant with $\widetilde\sigma$ and $\tau$, it follows that $\widetilde\tau$ is concordant with $\tau$. Furthermore, $\widetilde\tau(x_0)=y_1$ and $\widetilde\tau(x_1)=y_0$. 

We claim that $\widetilde\sigma$ is in the same connected component of $\FS(X,Y)$ as $\sigma$ and that $\widetilde\tau$ is in the same connected component as $\tau$. Let $X'=X\vert_{V(X)\setminus\{x_0\}}$ and $Y'=Y\vert_{V(Y)\setminus\{y_0\}}$. Because $\sigma$ and $\widetilde\sigma$ are concordant, the bijections $\sigma\vert_{V(X')}:V(X')\to V(Y')$ and $\widetilde\sigma\vert_{V(X')}:V(X')\to V(Y')$ are concordant as well. It follows by induction that these latter two bijections are in the same connected component of $\FS(X',Y')$, so there is a sequence $\Sigma$ of $(X',Y')$-friendly swaps transforming $\sigma\vert_{V(X')}$ into $\widetilde\sigma\vert_{V(X')}$. We can view $\Sigma$ as a sequence of $(X,Y)$-friendly swaps transforming $\sigma$ into $\widetilde\sigma$, so $\widetilde\sigma$ is in the same connected component as $\sigma$. A completely analogous argument proves that $\widetilde\tau$ is in the same connected component as $\tau$. We defined $\widetilde\tau$ so that it is adjacent to $\widetilde\sigma$ in $\FS(X,Y)$, so $\sigma$ and $\tau$ are in the same connected component of $\FS(X,Y)$.  

Now assume $\sigma(A_X)\cap\tau(B_X)=\emptyset$. We have $\tau(B_X)=V(Y)\setminus\sigma(A_X)=\sigma(V(X)\setminus A_X)=\sigma(B_X)$. There exist $a\in A_X$, $b\in B_X$, and a bijection $\sigma'$ such that $\sigma'$ is obtained by applying an $(X,Y)$-friendly swap across $\{a,b\}$ to $\sigma$. Now $\sigma'(a)=\sigma(b)\in\sigma(B_X)=\tau(B_X)$, so $\sigma'(A_X)\cap\tau(B_X)\neq\emptyset$. By the previous discussion, $\sigma'$ and $\tau$ are in the same connected component of $\FS(X,Y)$. Consequently, $\sigma$ and $\tau$ are in the same connected component. 
\end{proof}

\subsection{Exchangeable pairs}

We now turn to an extension of the notion of an $(X,Y)$-friendly swap.  Let $X$ and $Y$ be $n$-vertex graphs, and fix a bijection $\sigma \in V(\FS(X,Y))$.  Let $u$ and $v$ be distinct vertices of $Y$, and write $u'=\sigma^{-1}(u)$ and $v'=\sigma^{-1}(v)$.  Let $\sigma\circ(u'\,\,v')$ be the bijection that sends $u'$ to $v$, sends $v'$ to $u$, and sends $x$ to $x$ for all $x \in V(X) \setminus \{u',v'\}$.  We say that $u$ and $v$ are \emph{$(X,Y)$-exchangeable from $\sigma$} if $\sigma$ and $\sigma\circ(u'\,\,v')$ are in the same connected component of $\FS(X,Y)$.  In other words, $u$ and $v$ are exchangeable from $\sigma$ if there is a sequence of $(X,Y)$-friendly swaps that, when applied to $\sigma$, has the overall effect of swapping $u$ and $v$ (even if this swap is not itself $(X,Y)$-friendly). If $\Sigma$ is a sequence of $(X,Y)$-friendly swaps that transforms $\sigma$ into $\sigma\circ(u'\,\,v')$, then we say that applying $\Sigma$ to $\sigma$ \emph{exchanges} $u$ and $v$. 
For instance, this is the case in Example~\ref{Exam1}.

As a warm-up, let us establish some basic facts about exchangeability. 

\begin{lemma}\label{lem:exchangeabilityinverse}
Let $X$ and $Y$ be graphs, and let $\sigma\in V(\FS(X,Y))$. Two vertices $u,v\in V(Y)$ are $(X,Y)$-exchangeable from $\sigma$ if and only if $\sigma^{-1}(u)$ and $\sigma^{-1}(v)$ are $(Y,X)$-exchangeable from $\sigma^{-1}$. 
\end{lemma}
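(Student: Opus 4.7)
The plan is to reduce the statement to the graph isomorphism $\Phi:\FS(X,Y)\to\FS(Y,X)$ defined by $\Phi(\sigma)=\sigma^{-1}$, which was already noted in the preliminaries. Since $\Phi$ is a bijective graph isomorphism, it sends connected components of $\FS(X,Y)$ bijectively onto connected components of $\FS(Y,X)$. So it suffices to check that $\Phi$ carries the pair witnessing $(X,Y)$-exchangeability of $u$ and $v$ from $\sigma$ (namely $\sigma$ and $\sigma\circ(u'\,\,v')$, where $u'=\sigma^{-1}(u)$ and $v'=\sigma^{-1}(v)$) onto the pair witnessing $(Y,X)$-exchangeability of $u'$ and $v'$ from $\sigma^{-1}$ (namely $\sigma^{-1}$ and $\sigma^{-1}\circ(u\,\,v)$, since $(\sigma^{-1})^{-1}(u')=u$ and $(\sigma^{-1})^{-1}(v')=v$).

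The one thing that actually needs checking is that $(\sigma\circ(u'\,\,v'))^{-1}=\sigma^{-1}\circ(u\,\,v)$ as bijections $V(Y)\to V(X)$. I would verify this by a direct computation: first use $(\sigma\circ(u'\,\,v'))^{-1}=(u'\,\,v')^{-1}\circ\sigma^{-1}=(u'\,\,v')\circ\sigma^{-1}$, and then evaluate both $(u'\,\,v')\circ\sigma^{-1}$ and $\sigma^{-1}\circ(u\,\,v)$ on each $y\in V(Y)$, splitting into the three cases $y=u$, $y=v$, and $y\notin\{u,v\}$. In all three cases both sides give the same image (sending $u\mapsto v'$, $v\mapsto u'$, and every other $y$ to $\sigma^{-1}(y)$).

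Combining the two observations, $\sigma$ and $\sigma\circ(u'\,\,v')$ lie in the same connected component of $\FS(X,Y)$ if and only if their images $\sigma^{-1}$ and $\sigma^{-1}\circ(u\,\,v)$ lie in the same connected component of $\FS(Y,X)$, which is precisely the claimed equivalence. There is really no substantive obstacle here; the only mildly delicate point is not losing track of the inverses in the identity $(\sigma\circ(u'\,\,v'))^{-1}=\sigma^{-1}\circ(u\,\,v)$, which is why I would write out the case analysis explicitly rather than manipulate symbolically.
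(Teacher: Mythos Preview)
Your proof is correct and takes essentially the same approach as the paper, which simply states that the lemma follows immediately from the fact that $\sigma\mapsto\sigma^{-1}$ is a graph isomorphism $\FS(X,Y)\to\FS(Y,X)$. Your version just makes explicit the identity $(\sigma\circ(u'\,\,v'))^{-1}=\sigma^{-1}\circ(u\,\,v)$ that the paper leaves implicit.
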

\begin{proof}
This statement follows immediately from the observation that the map $\sigma \mapsto \sigma^{-1}$ is a graph isomorphism from $\FS(X,Y)$ to $\FS(Y,X)$.
\end{proof}

The following proposition shows that when we are concerned only with the vertex sets of the connected components of $\FS(X,Y)$, widespread exchangeability can be worth as much as actually having more edges in $Y$.

\begin{proposition}\label{lem:exchangeable}
Let $X$, $Y$, and $\widetilde Y$ be $n$-vertex graphs such that $Y$ is an edge-subgraph of $\widetilde Y$.  Suppose that for every edge $\{u, v\}$ of $\widetilde Y$ and every bijection $\sigma$ satisfying $\{\sigma^{-1}(u), \sigma^{-1}(v)\} \in E(X)$, the vertices $u$ and $v$ are $(X,Y)$-exchangeable from $\sigma$.  Then the connected components of $\FS(X,Y)$ and the connected components of $\FS(X,\widetilde Y)$ have the same vertex sets.  In particular, the number of connected components of $\FS(X, \widetilde Y)$ is equal to the number of connected components of $\FS(X, Y)$.
\end{proposition}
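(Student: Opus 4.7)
The plan is to show containment in both directions between the connected components of $\FS(X,Y)$ and $\FS(X,\widetilde Y)$, viewed as partitions of the common vertex set of bijections $V(X)\to V(Y)=V(\widetilde Y)$.

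One direction is immediate: since $Y$ is an edge-subgraph of $\widetilde Y$, every $(X,Y)$-friendly swap is also an $(X,\widetilde Y)$-friendly swap. Hence $\FS(X,Y)$ is an edge-subgraph of $\FS(X,\widetilde Y)$, and any two bijections in the same component of $\FS(X,Y)$ are in the same component of $\FS(X,\widetilde Y)$.

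For the other direction, I would argue edge-by-edge along a path in $\FS(X,\widetilde Y)$. Suppose $\sigma$ and $\sigma'$ are adjacent in $\FS(X,\widetilde Y)$. Then there is an edge $\{a,b\}\in E(X)$ such that $\{\sigma(a),\sigma(b)\}\in E(\widetilde Y)$ and $\sigma'=\sigma\circ(a\,\,b)$. Setting $u=\sigma(a)$ and $v=\sigma(b)$, so that $a=\sigma^{-1}(u)$ and $b=\sigma^{-1}(v)$, the hypothesis of the proposition applies to the edge $\{u,v\}\in E(\widetilde Y)$ and the bijection $\sigma$: the vertices $u$ and $v$ are $(X,Y)$-exchangeable from $\sigma$. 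Unpacking the definition, this precisely says that $\sigma$ and $\sigma\circ(\sigma^{-1}(u)\,\,\sigma^{-1}(v))=\sigma\circ(a\,\,b)=\sigma'$ lie in the same connected component of $\FS(X,Y)$. Chaining this observation along the edges of any $\FS(X,\widetilde Y)$-path from $\sigma$ to $\sigma'$ shows that any two bijections in the same component of $\FS(X,\widetilde Y)$ are in the same component of $\FS(X,Y)$.

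Combining the two containments yields the equality of vertex sets of corresponding components, and in particular the two friends-and-strangers graphs have the same number of connected components. There is no real obstacle here; the proof is essentially a direct translation of the definition of exchangeability into a component-replacement argument, with the only ``content'' being the observation that the hypothesis precisely covers every edge of $\FS(X,\widetilde Y)$ that need not be an edge of $\FS(X,Y)$.
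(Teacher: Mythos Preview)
Your proof is correct and follows essentially the same approach as the paper's: both reduce to showing that any edge of $\FS(X,\widetilde Y)$ joins two bijections lying in the same component of $\FS(X,Y)$, which is immediate from the exchangeability hypothesis. You are slightly more explicit about the trivial containment direction, but otherwise the arguments coincide.
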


\begin{proof}
It suffices to show that if $\{\sigma, \sigma'\}$ is an edge in $\FS(X, \widetilde Y)$, then $\sigma$ and $\sigma'$ are in the same connected component of $\FS(X,Y)$.  Let $\{\sigma,\sigma'\}\in E(\FS(X,\widetilde Y))$. The bijection $\sigma'$ is obtained from $\sigma$ by an $(X,\widetilde Y)$-friendly swap across some edge $\{u', v'\}$ in $X$; let $u=\sigma(u')$ and $v=\sigma(v')$ so that $\{u,v\} \in E(\widetilde Y)$.  Our hypothesis tells us that $u$ and $v$ are $(X,Y)$-exchangeable from $\sigma$;  that is to say, $\sigma$ and $\sigma'$ are in the same connected component of $\FS(X,Y)$.
\end{proof}

We highlight the special case where $X$ is connected and $\widetilde Y=K_n$.

\begin{lemma}\label{lem:exchangeable-K_n}
Let $X$ and $Y$ be $n$-vertex graphs, and suppose that $X$ is connected.  Suppose that for all distinct vertices $u,v\in V(Y)$ and every bijection $\sigma$ satisfying $\{\sigma^{-1}(u), \sigma^{-1}(v)\} \in E(X)$, the vertices $u$ and $v$ are $(X,Y)$-exchangeable from $\sigma$.  Then $\FS(X,Y)$ is connected.
\end{lemma}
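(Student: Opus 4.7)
The plan is to derive this as essentially an immediate corollary of Proposition~\ref{lem:exchangeable}, by taking $\widetilde Y = K_n$. Since every pair of distinct vertices of $K_n$ is adjacent, the hypothesis of Lemma~\ref{lem:exchangeable-K_n} is precisely the hypothesis of Proposition~\ref{lem:exchangeable} for the choice $\widetilde Y = K_n$. Hence Proposition~\ref{lem:exchangeable} tells us that the number of connected components of $\FS(X, Y)$ equals the number of connected components of $\FS(X, K_n)$, and it suffices to show that $\FS(X, K_n)$ is connected.

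To establish this last point, I would argue that $\FS(X, K_n)$ is (isomorphic to) the Cayley graph of the symmetric group $\mathfrak S_{V(X)}$ with generating set $T_X = \{(a\,\,b) : \{a,b\} \in E(X)\}$. Indeed, when $Y = K_n$, every swap is $(X,K_n)$-friendly as long as it takes place across an edge of $X$, so two bijections $\sigma, \sigma'$ are adjacent in $\FS(X, K_n)$ exactly when $\sigma^{-1} \circ \sigma' \in T_X$. Connectedness of this Cayley graph is equivalent to the claim that $T_X$ generates $\mathfrak S_{V(X)}$.

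The remaining step is the classical fact that the transpositions indexed by the edges of a connected graph $X$ on $V(X)$ generate the full symmetric group $\mathfrak S_{V(X)}$. This is standard, but for completeness one can prove it by induction on the number of vertices: pick a leaf $\ell$ of a spanning tree of $X$ with unique neighbor $m$ in the tree; by induction the transpositions on $V(X) \setminus \{\ell\}$ coming from the remaining tree edges generate $\mathfrak S_{V(X) \setminus \{\ell\}}$, and conjugating any transposition $(a\,\,b)$ with $a, b \neq \ell$ by $(\ell\,\,m)$ together with $(\ell\,\,m)$ itself yields all transpositions involving $\ell$, hence all of $\mathfrak S_{V(X)}$.

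I do not expect any serious obstacle here; the lemma is really just an invocation of Proposition~\ref{lem:exchangeable} packaged with the standard generation fact. The only thing to be careful about is to note explicitly that the hypothesis ``for all distinct $u, v$'' matches the hypothesis ``for every edge of $\widetilde Y$'' exactly when $\widetilde Y = K_n$, and to observe that $\FS(X, K_n)$ is the Cayley graph of $\mathfrak S_n$ with edge-transposition generators so that connectedness of $X$ immediately gives connectedness of $\FS(X, K_n)$.
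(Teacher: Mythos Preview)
Your proposal is correct and follows essentially the same approach as the paper: invoke Proposition~\ref{lem:exchangeable} with $\widetilde Y=K_n$, then observe that $\FS(X,K_n)$ is the Cayley graph of $\mathfrak S_n$ generated by the edge-transpositions of $X$, which is connected because $X$ is connected. The only difference is that the paper cites a reference for the generation fact, whereas you sketch an inductive proof.
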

\begin{proof}
It suffices to consider the case in which $V(X)=V(Y)=[n]$ so that $V(\FS(X,Y))$ is the symmetric group $\mathfrak S_n$. Each edge $\{i,j\}$ of $X$ corresponds naturally to the transposition $(i\,\,j)\in\mathfrak S_n$. By Proposition~\ref{lem:exchangeable}, it suffices to show that $\FS(X,K_n)$ is connected. Note that $\FS(X,K_n)$ is isomorphic to the Cayley graph of $\mathfrak S_n$ generated by the transpositions corresponding to the edges of $X$. It is well known \cite[Lemma 3.10.1]{Godsil} and easy to show that this Cayley graph is connected (i.e., the transpositions corresponding to edges of $X$ generate $\mathfrak S_n$) since $X$ is connected. 
\end{proof}

\section{Random Graphs}\label{Sec:Random}

\subsection{Disconnectedness with high probability}

Notice that the following proposition, which is phrased in terms of isolated vertices instead of disconnectedness, is actually stronger than the first statement in Theorem~\ref{thm:random}. 

\begin{proposition}\label{Prop:NoIsolated}
Fix any small $\varepsilon>0$, and let $X$ and $Y$ be independently-chosen random graphs in $\mathcal G(n,p)$. If \[p=p(n)\leq\frac{2^{-1/2}-\varepsilon}{n^{1/2}},\] then the friends-and-strangers graph $\FS(X,Y)$ has an isolated vertex with high probability. 
\end{proposition}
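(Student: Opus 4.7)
My plan is to derive Proposition~\ref{Prop:NoIsolated} from the classical packing theorem of Sauer and Spencer. Observe that a bijection $\sigma:V(X)\to V(Y)$ is an isolated vertex of $\FS(X,Y)$ precisely when $\{\sigma(a),\sigma(b)\}\notin E(Y)$ for every $\{a,b\}\in E(X)$; equivalently, $\sigma$ witnesses a packing of $X$ and $Y$ onto a common vertex set so that their edge sets become disjoint. The Sauer--Spencer theorem guarantees that such a packing exists whenever the two $n$-vertex graphs $X$ and $Y$ satisfy $2\Delta(X)\Delta(Y)<n$, where $\Delta(\cdot)$ denotes the maximum degree.

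Accordingly, it suffices to show that $2\Delta(X)\Delta(Y)<n$ with high probability. Each vertex degree of $X\sim\mathcal G(n,p)$ is distributed as $\mathrm{Bin}(n-1,p)$ with mean at most $(2^{-1/2}-\varepsilon)\sqrt{n}$. A standard Chernoff bound (applied with a constant multiplicative deviation, since we are increasing $np$ by the constant factor $(2^{-1/2}-\varepsilon/2)/(2^{-1/2}-\varepsilon)$) yields $\Pr[\deg_X(v)\geq(2^{-1/2}-\varepsilon/2)\sqrt{n}]\leq e^{-c\sqrt{n}}$ for some constant $c=c(\varepsilon)>0$, and a union bound over the $n$ vertices of $X$ gives $\Pr[\Delta(X)\leq(2^{-1/2}-\varepsilon/2)\sqrt{n}]\to 1$ as $n\to\infty$; the analogous statement holds for $Y$. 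On the resulting high-probability event,
\[
2\Delta(X)\Delta(Y)\leq 2\bigl(2^{-1/2}-\varepsilon/2\bigr)^2 n=\bigl(1-\sqrt{2}\,\varepsilon+\varepsilon^2/2\bigr)n<n,
\]
so the Sauer--Spencer hypothesis is met.

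On this event, the Sauer--Spencer theorem produces a bijection $\sigma:V(X)\to V(Y)$ with $\sigma(E(X))\cap E(Y)=\emptyset$, and this $\sigma$ is an isolated vertex of $\FS(X,Y)$. The only nontrivial technical ingredient is the Chernoff concentration step carried out with the right constant; the precise value $2^{-1/2}$ in the proposition's threshold reflects exactly the $2\Delta(X)\Delta(Y)<n$ condition of Sauer--Spencer applied to random graphs with typical maximum degree concentrated around $np\approx\sqrt{n/2}$.
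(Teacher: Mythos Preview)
Your proof is correct and follows essentially the same approach as the paper: both invoke the Sauer--Spencer packing theorem to reduce the problem to showing $2\Delta(X)\Delta(Y)<n$ with high probability, and both verify this via standard concentration of the maximum degree around $pn$. The only difference is cosmetic---you spell out the Chernoff bound explicitly, whereas the paper simply cites the well-known fact that $\Delta(X)=pn(1+o(1))$ with high probability.
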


\begin{proof}
There is an extensive literature dealing with edge-disjoint placements of graphs. In particular, Sauer and Spencer \cite{Sauer} (see also Catlin's article \cite{Catlin}) proved that if $G$ and $H$ are $n$-vertex graphs with maximum degrees $\Delta(G)$ and $\Delta(H)$ satisfying $2\Delta(G)\Delta(H)<n$, then there exists a bijection $\sigma:V(G)\to V(H)$ such that for every edge $\{a,b\}$ of $G$, the pair $\{\sigma(a),\sigma(b)\}$ is not an edge in $H$. This is equivalent to the statement that $\sigma$ is an isolated vertex in $\FS(G,H)$. Let us now return to our random graphs $X$ and $Y$. It suffices to consider the case in which $p=(2^{-1/2}-\varepsilon)/n^{1/2}$; in this case, it is well known that $\Delta(X)=pn(1+o(1))$ and $\Delta(Y)=pn(1+o(1))$ with high probability. Consequently, $2\Delta(X)\Delta(Y)=2p^2n^2(1+o(1))\leq 2\left(2^{-1/2}-\varepsilon\right)^2(1+o(1))n<n$ with high probability. 
\end{proof}

\subsection{Connectedness with high probability}\label{Sec:random-upper}

In this section, we prove the second part of Theorem~\ref{thm:random}. In order to do this, we first prove a somewhat technical lemma that allows us to find specific pairs of graphs embedded in pairs of random graphs. This lemma will also be one of our main tools when we analyze random bipartite graphs in Section~\ref{Sec:RandomBipartite}. Let us first introduce some notation and definitions. 

Let $m$ be a positive integer, and let $G$ and $H$ be two graphs on the vertex set $[m]$. Let $X$ and $Y$ be $n$-vertex graphs, and let $\sigma:V(X)\to V(Y)$ be a bijection. Let $V_1,\ldots,V_m$ be a list of $m$ pairwise disjoint sets of vertices of $Y$. We say that the pair of graphs $(G,H)$ is \emph{embeddable in $(X,Y)$ with respect to the sets $V_1,\ldots,V_m$ and the bijection $\sigma$} if there exist vertices $v_i\in V_i$ for all $i\in[m]$ such that for all $i,j\in [m]$, we have 
\[\{i,j\}\in E(H)\implies\{v_i,v_j\}\in E(Y)\quad\text{and}\quad\{i,j\}\in E(G)\implies\{\sigma^{-1}(v_i),\sigma^{-1}(v_j)\}\in E(X).\]

Suppose $q_1,\ldots,q_m$ are nonnegative integers satisfying $q_1+\cdots+q_m\leq n$. We say the pair $(G,H)$ is \emph{$(q_1,\ldots,q_m)$-embeddable in $(X,Y)$} if for every list $V_1,\ldots,V_m$ of pairwise disjoint subsets of $V(Y)$ satisfying $|V_i|=q_i$ for all $i\in[m]$ and every bijection $\sigma:V(X)\to V(Y)$, the pair $(G,H)$ is embeddable in $(X,Y)$ with respect to the sets $V_1,\ldots,V_m$ and the bijection $\sigma$. 

\begin{lemma}\label{Lem:Embeddable}
Let $m,n,q_1,\ldots,q_m$ be positive integers such that $Q:=q_1+\cdots+q_m\leq n$, and let $G$ and $H$ be two graphs on the vertex set $[m]$. For every set $J\subseteq [m]$, let $\beta(J)=|E(G\vert_J)|+|E(H\vert_J)|$. Choose $0\leq p\leq 1$, and let $X$ and $Y$ be independently-chosen random graphs in $\mathcal G(n,p)$. If for every set $J\subseteq [m]$ satisfying $\beta(J)\geq 1$ we have \[p^{\beta(J)}\prod_{j\in J}q_j\geq 3\cdot 2^{m+1}Q\log n,\] then the probability that the pair $(G,H)$ is $(q_1,\ldots,q_m)$-embeddable in $(X,Y)$ is at least $1-n^{-Q}$. 
\end{lemma}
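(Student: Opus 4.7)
The plan is to treat the lemma via Janson's inequality applied to a fixed embedding configuration, combined with a union bound over all such configurations. First observe that whether the pair $(G,H)$ is embeddable with respect to $(V_1,\dots,V_m)$ and $\sigma$ depends only on the triple $((V_i),(U_i),(\phi_i))$, where $U_i:=\sigma^{-1}(V_i)$ and $\phi_i:=\sigma|_{U_i}\colon U_i\to V_i$; the number of such triples is bounded by
\[
\left(\frac{n!}{(n-Q)!\prod_i q_i!}\right)^{\!2}\!\cdot \prod_i q_i! \;\leq\; \frac{n^{2Q}}{\prod_i q_i!} \;\leq\; n^{2Q}.
\]
Hence, once I show that for each fixed triple the probability of no valid embedding is at most $n^{-3Q}$, the union bound will immediately give the desired $n^{-Q}$ failure bound.

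Fix such a triple and let $N$ count the valid embeddings, i.e., the tuples $(u_1,\dots,u_m)\in U_1\times\cdots\times U_m$ satisfying $\{u_i,u_j\}\in E(X)$ for every $\{i,j\}\in E(G)$ and $\{\phi_i(u_i),\phi_j(u_j)\}\in E(Y)$ for every $\{i,j\}\in E(H)$. Denote by $A_{\mathbf{u}}$ the event that $\mathbf{u}=(u_1,\ldots,u_m)$ is a valid embedding. Disjointness of the $U_i$ and of the $V_i$ ensures that the $\beta([m])$ required edges for any single candidate are distinct and $p$-independent, so $\mu:=\mathbb{E}[N]=p^{\beta([m])}\prod_i q_i$, and the hypothesis (taken at $J=[m]$) forces $\mu\geq 3\cdot 2^{m+1}Q\log n$.

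The heart of the argument is bounding the Janson correlation term $\Delta$, the sum of $\mathbb{P}[A_{\mathbf{u}}\cap A_{\mathbf{w}}]$ over unordered pairs $\{\mathbf{u},\mathbf{w}\}$ of distinct candidates whose events are dependent. Grouping such pairs by their overlap $J:=\{i:u_i=w_i\}$, the disjointness of the $U_i$ forces any shared required edge to have both endpoints in $J$, so the shared events correspond exactly to the edges of $E(G|_J)\cup E(H|_J)$; thus $\mathbb{P}[A_{\mathbf{u}}\cap A_{\mathbf{w}}]=p^{2\beta([m])-\beta(J)}$, and only subsets $J$ with $\beta(J)\geq 1$ contribute. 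Since the number of unordered pairs with overlap $J$ is at most $(\prod_i q_i)^2/(2\prod_{i\in J}q_i)$, plugging the hypothesis in separately for each $J$ yields
\[
\Delta \;\leq\; \frac{\mu^2}{2}\sum_{\substack{J\subsetneq[m]\\ \beta(J)\geq 1}}\frac{1}{p^{\beta(J)}\prod_{j\in J}q_j} \;\leq\; \frac{\mu^2}{2}\cdot \frac{2^m}{3\cdot 2^{m+1}Q\log n} \;=\; \frac{\mu^2}{12Q\log n},
\]
where the factor $2^{m+1}$ of slack built into the hypothesis is exactly what absorbs the sum over the $2^m$ possible subsets $J$.

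Combining $\mu\geq 12Q\log n$ and $\mu^2/\Delta\geq 12Q\log n$ in Janson's inequality $\mathbb{P}[N=0]\leq \exp(-\mu^2/(2(\mu+\Delta)))$ yields $\mathbb{P}[N=0]\leq \exp(-3Q\log n) = n^{-3Q}$, which together with the union bound over triples finishes the proof. The main obstacle is the correlation estimate: one has to identify exactly which edge events can be shared between two candidate tuples (parameterized by the overlap $J$) and then split the resulting sum so that the hypothesis can be invoked once per subset; everything else, including the counting of configurations and the choice of Janson form, is routine bookkeeping.
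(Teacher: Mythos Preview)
Your proposal is correct and follows essentially the same route as the paper: bound the number of configuration triples by $n^{2Q}$, then for each fixed configuration apply Janson's inequality with the correlation term $\Delta$ decomposed according to the overlap set $J$, using the hypothesis once per $J$. The only quibble is that the combined Janson form $\Pr[N=0]\le\exp\bigl(-\mu^2/(2(\mu+\Delta))\bigr)$ holds with $\Delta$ taken over \emph{ordered} dependent pairs (the paper equivalently splits into the cases $\Delta\le\mu$ and $\Delta\ge\mu$ and invokes the standard and extended Janson inequalities separately); with that convention your estimates give $\mu\ge 12Q\log n$ and $\mu^2/(2\Delta_{\mathrm{ord}})\ge 3Q\log n$, which is exactly the $n^{-3Q}$ needed before the union bound.
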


\begin{proof}
We may assume $\beta([m])\geq 1$ since the lemma is trivial otherwise. Fix a list $V_1,\ldots,V_m$ of pairwise disjoint subsets of $V(Y)$ satisfying $|V_i|=q_i$ for all $i\in[m]$ and an injection $\iota:\bigcup_{i\in[m]}V_i\to V(X)$. There are at most $n^{2Q}$ ways to make these choices. Now extend $\iota^{-1}$ arbitrarily to a bijection $\sigma:V(X)\to V(Y)$. Note that whether or not $(G,H)$ is embeddable in $(X,Y)$ with respect to the sets $V_1,\ldots,V_m$ and the bijection $\sigma$ does not depend on the way in which we extended $\iota^{-1}$ to $\sigma$. We will show that the probability that $(G,H)$ is not embeddable in $(X,Y)$ with respect to the sets $V_1,\ldots,V_m$ and the bijection $\sigma$ is at most $n^{-3Q}$; this will imply the desired result. In order to do this, we make use of the Janson Inequalities (as stated in Theorems~8.1.1 and 8.1.2 of \cite{AlonSpencer}). 

Given a tuple $t=(v_1,\ldots,v_m)\in V_1\times\cdots \times V_m$, let $B_t$ be the event that for all $i,j\in [m]$, we have 
\[\{i,j\}\in E(H)\implies\{v_i,v_j\}\in E(Y)\quad\text{and}\quad\{i,j\}\in E(G)\implies\{\sigma^{-1}(v_i),\sigma^{-1}(v_j)\}\in E(X).\] For tuples $t=(v_1,\ldots,v_m)$ and $t'=(v_1',\ldots,v_m')$ in $V_1\times\cdots \times V_m$ and $J\subseteq[m]$, we write $t\sim_J t'$ if $J=\{j\in [m]:v_j=v_j'\}$. We write $t\sim t'$ if there exist $i,j\in[m]$ such that $v_i=v_i'$, $v_j=v_j'$, and $\{i,j\}\in E(G)\cup E(H)$. Observe that $t\sim t'$ if and only if $t\sim_J t'$ for some set $J\subseteq[m]$ satisfying $\beta(J)\geq 1$. Moreover, if $t\not\sim t'$, then the events $B_t$ and $B_{t'}$ are independent.  We define \[\Delta=\sum_{t\sim t'}\text{Pr}[B_t\wedge B_{t'}],\] where $\text{Pr}[B_t\wedge B_{t'}]$ is the probability that $B_t$ and $B_{t'}$ both occur and the sum is over all ordered pairs $(t,t')$ such that $t,t'\in V_1\times\cdots \times V_m$ and $t\sim t'$. Let \[\mu=\sum_{t\in V_1\times\cdots\times V_m}\text{Pr}[B_t]\] be the expected number of the events $B_t$ that occur. We have 
\begin{equation}\label{Eq6}
\mu=p^{\beta([m])}\prod_{j\in [m]}q_j\geq 3\cdot 2^{m+1}Q\log n,
\end{equation} where the inequality follows from our hypothesis with $J=[m]$. 

Observe that we can write 
\begin{equation}\label{Eq7}
\Delta=\sum_{\substack{J\subseteq[m]\\ \beta(J)\geq 1}}\Delta_J,\end{equation} where $\displaystyle\Delta_J=\sum_{t\sim_J t'}\text{Pr}[B_t\wedge B_{t'}]$. For each $J\subseteq[m]$ with $\beta(J)\geq 1$, we have 
\begin{equation}\label{Eq8}
\Delta_J\leq\left(\prod_{j\in J}q_j\right)p^{\beta(J)}\left(\prod_{i\in[m]\setminus J}q_i^2\right)p^{2(\beta([m])-\beta(J))}=\frac{\mu^2}{p^{\beta(J)}\prod_{j\in J}q_j}.
\end{equation} Indeed, the number of ways to choose vertices $v_j=v_j'\in V_j$ for all $j\in J$ is $\prod_{j\in J}q_j$, and the probability that we have 
\[\{i,j\}\in E(H)\implies\{v_i,v_j\}\in E(Y)\quad\text{and}\quad\{i,j\}\in E(G)\implies\{\sigma^{-1}(v_i),\sigma^{-1}(v_j)\}\in E(X)\] for all $i,j\in J$ is $p^{\beta(J)}$. Moreover, the number of ways to choose the (distinct) vertices $v_i,v_i'\in V_i$ for all $i\in[m]\setminus J$ is at most $\prod_{i\in[m]\setminus J}q_i^2$, and the probability that we have 
\[\{i,j\}\in E(H)\implies\{v_i,v_j\},\{v_i',v_j'\}\in E(Y)\] and \[\{i,j\}\in E(G)\implies\{\sigma^{-1}(v_i),\sigma^{-1}(v_j)\},\{\sigma^{-1}(v_i'),\sigma^{-1}(v_j')\}\in E(X)\] for all $(i,j)\in([m]\times[m])\setminus(J\times J)$ is $p^{2(\beta([m])-\beta(J))}$. Combining \eqref{Eq7} and \eqref{Eq8} yields the inequality \begin{equation}\label{Eq9}
\Delta\leq\sum_{\substack{J\subseteq[m]\\ \beta(J)\geq 1}}\frac{\mu^2}{p^{\beta(J)}\prod_{j\in J}q_j}.
\end{equation}

The event that $(G,H)$ is not embeddable in $(X,Y)$ with respect to the sets $V_1,\ldots,V_m$ and the bijection $\sigma$ is the same as the event $\bigwedge_{t\in V_1\times\cdots\times V_m}\overline{B_t}$ that none of the events $B_t$ occur. If $\Delta\leq \mu$, then by the Janson Inequality \cite[Theorem~8.1.1]{AlonSpencer} and \eqref{Eq6}, we have \[\Pr\left[\bigwedge_{t\in 
V_1\times\cdots\times V_m}\overline{B_t}\right]\leq e^{-\mu+\Delta/2}\leq e^{-\mu/2}\leq e^{-(6Q\log n)/2}=n^{-3Q},\] as desired. Now suppose $\Delta\geq \mu$. By \eqref{Eq7}, there must be a subset $J^*\subseteq[m]$ such that $\beta(J^*)\geq 1$ and $\Delta_{J^*}\geq\Delta/2^m$. Using \eqref{Eq8}, we find that \[\frac{\mu^2}{2\Delta}\geq \frac{\mu^2}{2^{m+1}\Delta_{J^*}}\geq\frac{\mu^2}{2^{m+1}}\cdot \frac{p^{\beta(J^*)}\prod_{j\in J^*}q_j}{\mu^2}=\frac{1}{2^{m+1}}p^{\beta(J^*)}\prod_{j\in J^*}q_j.\] Combining this with the hypothesis of the lemma shows that $\mu^2/(2\Delta)\geq 3Q\log n$. We can now use the extended Janson Inequality \cite[Theorem~8.1.2]{AlonSpencer} to find that \[\Pr\left[\bigwedge_{t\in 
V_1\times\cdots\times V_m}\overline{B_t}\right]\leq e^{-\mu^2/(2\Delta)}\leq e^{-3Q\log n}=n^{-3Q},\] as desired.
\end{proof}

Our basic strategy for proving the second part of Theorem~\ref{thm:random} is as follows. Let $n$ be a large integer, and let 
\[
m=\left\lfloor\left(\log n\right)^{2/3}\right\rfloor
.\] (We omit the floor symbols in what follows since doing so will not affect asymptotics.)  We will construct specific graphs $G^*$ and $H^*$ on the vertex set $[m+2]$ and then use repeated applications of Wilson's theorem to prove that the vertices $m+1$ and $m+2$ are $(G^*,H^*)$-exchangeable from the identity bijection $\Id$. We will then consider independently-chosen random graphs $X$ and $Y$ in $\mathcal G(n,p)$ and use Lemma~\ref{Lem:Embeddable} to show that the following holds with high probability: For any fixed vertices $u,v\in V(Y)$ and any bijection $\sigma:V(X)\to V(Y)$ such that $\{\sigma^{-1}(u),\sigma^{-1}(v)\}\in E(X)$, there is a graph embedding $\varphi$ of $G^*$ into $X$ and a graph embedding $\psi$ of $H^*$ into $Y$ such that $\psi(m+1)=u$, $\psi(m+2)=v$, and $\psi\circ\Id=\sigma\circ\varphi$. This will imply that (with high probability) for any such $u$, $v$, and $\sigma$, the vertices $u$ and $v$ are $(X,Y)$-exchangeable from $\sigma$; the proof will then follow from Lemma~\ref{lem:exchangeable-K_n}. 

To begin this endeavor, we describe the graphs $G^*$ and $H^*$, each with vertex set $[m+2]$. Let \[\ell=\left\lfloor \sqrt{m}/2\right\rfloor\] (once again, we omit the floor symbols in what follows). Denote the elements of $[m]$ (written in an arbitrary order) by \[w,x_1,\ldots,x_\ell,y_1,\ldots,y_\ell,z_1,\ldots,z_{m-2\ell-1}.\] The edge set of $H^*$ consists of all edges of the form $\{m+1,x_i\}$, $\{m+2,y_j\}$, $\{w,x_i\}$, $\{w,y_j\}$, or $\{w,z_k\}$ for $1\leq i,j\leq \ell$ and $1\leq k\leq m-2\ell-1$. In other words, if we let $H^{**}$ denote the star graph on the vertex set $[m]$ with center $w$, then $H^*$ is obtained from $H^{**}$ by adding the vertices $m+1$ and $m+2$, along with the additional edges of the form $\{m+1,x_i\}$ and $\{m+2,y_j\}$. To construct $G^*$, we first construct a graph $G^{**}$ on the vertex set $[m]$ by arranging the vertices in $[m]$ along a cycle graph in such a way that the vertices $z_1,\ldots,z_{12}$ appear in this (cyclic) order when we traverse the cycle clockwise. We then add in the edges $\{z_1,z_6\}$, $\{z_2,z_4\}$, $\{z_7,z_{12}\}$, $\{z_8,z_{10}\}$. We also make sure to place the vertices on the cycle in such a way that the following conditions are satisfied: 
\begin{itemize}
    \item The cycle contains the edges $\{z_4,z_5\}$, $\{z_5,z_6\}$, $\{z_{10},z_{11}\}$,  $\{z_{11},z_{12}\}$ (i.e., $z_4,z_5,z_6$ appear consecutively along the cycle, as do $z_{10},z_{11},z_{12}$).  
    \item The clockwise distance along the cycle between $z_3$ and $z_5$ is $\ell-1$, as is the clockwise distance along the cycle between $z_9$ and $z_{11}$. 
    \item The clockwise distance along the cycle from $z_2$ to $z_4$ is an even number. 
    \item The $2\ell+1$ vertices $w,x_1,\ldots,x_\ell,y_1,\ldots,y_\ell$ are placed on the cycle so that the distance in $G^{**}$ between any two of them, as well as the distance in $G^{**}$ between any one of them and any one of the vertices $z_3,z_5,z_9,z_{11}$, is at least $m/(3\ell)$.
    \item The girth of the entire graph $G^{**}$ is at least $m/6$.
\end{itemize}

The graph $G^*$ is obtained from $G^{**}$ by adding the vertices $m+1$ and $m+2$ and the additional edges $\{m+1,m+2\}$, $\{m+1,z_3\}$, $\{m+1,z_{11}\}$, $\{m+2,z_5\}$, $\{m+2,z_9\}$ to $G^{**}$. Figure~\ref{Fig3} shows schematic drawings of the graphs $G^*$ and $H^*$. In what follows, we refer to the cycle in $G^*$ containing all the vertices in $[m]$ as the \emph{large cycle in $G^*$}.  

\begin{figure}[ht]
\begin{center}
\includegraphics[height=7.9cm]{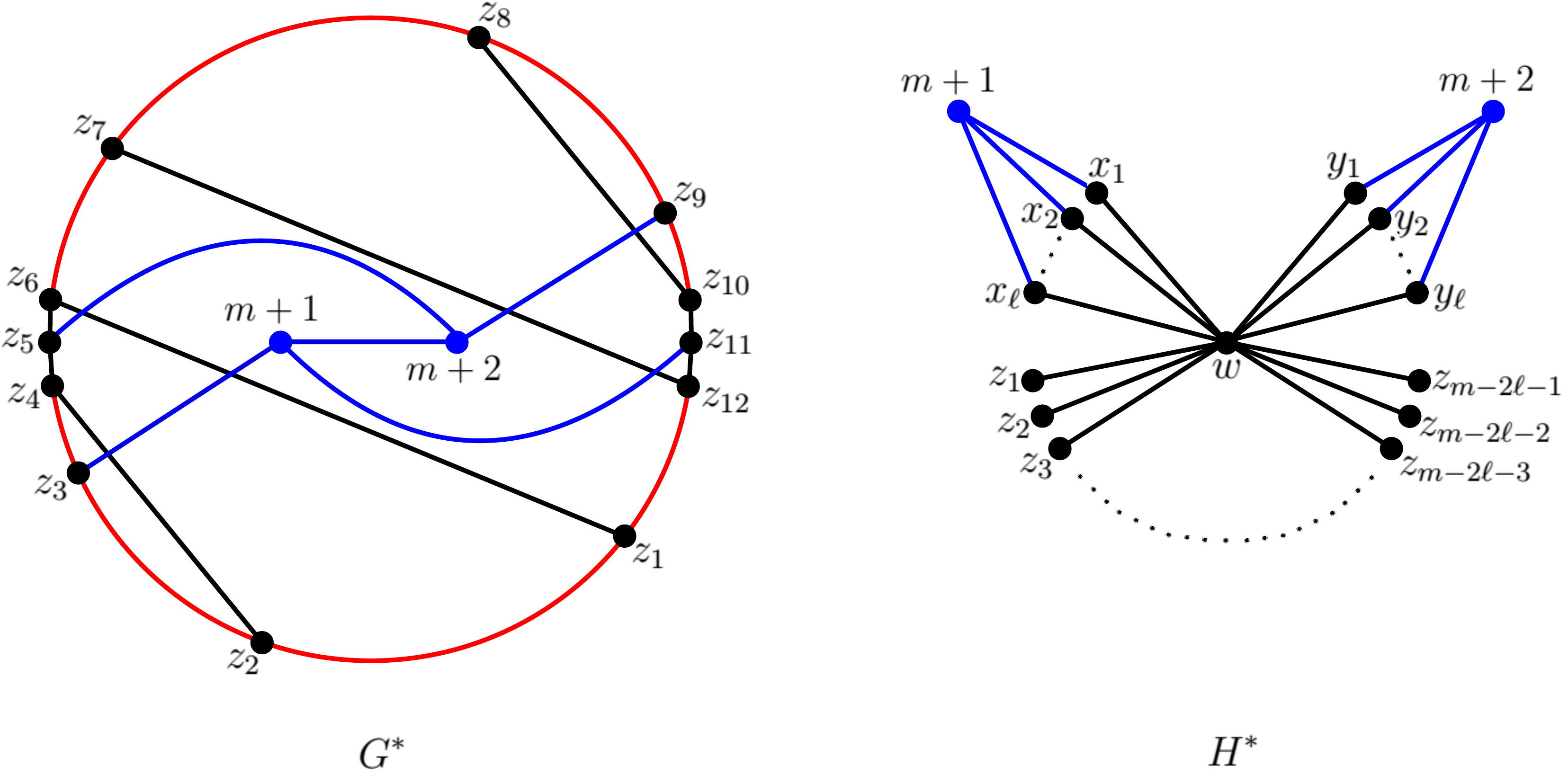}
\caption{Schematic diagrams of $G^*$ and $H^*$. The graphs $G^{**}$ and $H^{**}$ are obtained from $G^*$ and $H^*$, respectively, by removing the blue vertices and edges. In the diagram representing $G^*$, the red arcs on the large cycle are assumed to contain several vertices and edges, while each of the blue and black arcs represents a single edge. The vertices $w,x_1,\ldots,x_\ell,y_1,\ldots,y_\ell$ are not marked in the diagram of $G^*$; these vertices are placed in arbitrary positions so that the distance between any two of them, as well as the distance between any one of them and any one of the vertices $z_3,z_5,z_9,z_{11}$, is at least $m/(3\ell)$.}  
\label{Fig3}
\end{center}  
\end{figure}

\begin{lemma}\label{Lem2} 
Let $G^*$ and $H^*$ be the graphs constructed above. The vertices $m+1$ and $m+2$ are $(G^*,H^*)$-exchangeable from the identity bijection $\Id:[m+2]\to[m+2]$. 
\end{lemma}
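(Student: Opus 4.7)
The plan is to work in $\FS(G^*, H^*)$ and use Wilson's theorem (Theorem~\ref{thm:wilson}) applied to the induced subgraphs on $[m]$, augmented with an explicit ``gadget sequence'' that exchanges the two special tokens.

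First, I would verify that $G^{**} := G^*\vert_{[m]}$ is Wilsonian: biconnectedness is immediate from the large Hamiltonian cycle; non-bipartiteness follows because the chord $\{z_2, z_4\}$ together with the (even-length) clockwise arc from $z_2$ to $z_4$ forms an odd cycle; $G^{**}$ is not a cycle graph (it has chords); and $G^{**}$ is not isomorphic to $\theta_0$ since $m$ is large. On the other hand, $H^{**} := H^*\vert_{[m]}$ is exactly $\mathsf{Star}_m$ with center $w$. By Theorem~\ref{thm:wilson}, $\FS(\mathsf{Star}_m, G^{**})$ is connected; equivalently (by the isomorphism $\sigma \mapsto \sigma^{-1}$), so is $\FS(G^{**}, H^{**})$. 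Since any $(G^{**}, H^{**})$-friendly swap is also $(G^*, H^*)$-friendly and leaves the tokens at vertices $m+1$ and $m+2$ fixed, these swaps let us freely permute the $[m]$-labeled tokens among the $[m]$-vertices while the tokens $m+1$ and $m+2$ remain at vertices $m+1$ and $m+2$.

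The main work is to produce a sequence of $(G^*,H^*)$-friendly swaps whose net effect is the transposition of the tokens at $m+1$ and $m+2$, modulo a permutation of the $[m]$-tokens (which the Wilson step then undoes). My plan is: (i) use Wilson to place the tokens $x_1,\ldots,x_\ell$ along the short cycle-arc from $z_3$ to $z_5$ (with $x_1$ at $z_3$ and $x_\ell$ at $z_5$), place $y_1,\ldots,y_\ell$ analogously on the arc from $z_9$ to $z_{11}$, and put $w$ at $z_1$; (ii) ``walk'' the token $m+1$ from vertex $m+1$ to vertex $z_5$ by successive friendly swaps of $m+1$ with the $x_i$'s along the path, and symmetrically walk $m+2$ from $m+2$ to $z_{11}$ via the $y_j$'s; (iii) perform the ``crossover'' described next; and (iv) use Wilson once more to restore the $[m]$-tokens. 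After step (ii), vertex $m+1$ holds the token $x_1$ and vertex $m+2$ holds $y_1$, which are not $H^*$-adjacent, so they cannot be swapped directly. Instead, I apply Wilson's theorem on the residual subgraph $G^{**}\setminus\{z_5, z_{11}\}$ to bring the token $w$ to vertex $z_3$, then execute the three friendly swaps $\{m+1, z_3\}$, $\{m+1, m+2\}$, $\{m+1, z_{11}\}$ in order: these successively move $w$ to $m+1$, then $y_1$ to $m+1$ (using that $w$ and $y_1$ are $H^*$-adjacent), and finally $m+2$ to $m+1$. A symmetric sub-sequence — first sliding $m+1$ back from $z_5$ to $z_4$ via a friendly swap with $x_\ell$, then using Wilson (with the chord $\{z_1, z_6\}$ providing a crucial shortcut into the $z_5$-neighborhood) to reposition $w$ and some $x_i$ so that $m+2$ comes to hold an $x_i$-token while $z_5$ holds an $x_j$-token, and finally executing the swaps $\{z_4, z_5\}$ and $\{z_5, m+2\}$ — then places the token $m+1$ at vertex $m+2$.

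The main obstacle is verifying that the intermediate Wilson applications in step (iii) succeed, which requires the residual subgraphs $G^{**}\setminus\{z_5,z_{11}\}$ and $G^{**}\setminus\{z_5\}$ to be sufficiently Wilsonian (in the sense that the relevant $\FS$-graphs on the remaining tokens remain connected). This is precisely the role of the detailed construction of $G^*$: the chord $\{z_1, z_6\}$ (symmetrically $\{z_7, z_{12}\}$) provides a shortcut into the $z_5$- (resp.\ $z_{11}$-) neighborhood that can bypass the removed vertex; the chord $\{z_2, z_4\}$ together with its even-length-arc condition (symmetrically $\{z_8, z_{10}\}$) ensures that odd cycles survive the vertex removals; the consecutive triples $z_4, z_5, z_6$ and $z_{10}, z_{11}, z_{12}$ along the cycle preserve biconnectedness after removing $z_5$ or $z_{11}$; and the large girth together with the $m/(3\ell)$-separation of the special vertices $w, x_i, y_j$ from $z_3, z_5, z_9, z_{11}$ prevents unexpected short cycles or interferences during the gadget sequence.
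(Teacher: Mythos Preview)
Your overall architecture matches the paper's proof: verify that $G^{**}$ is Wilsonian, use Wilson on $(G^{**},H^{**})$ to line up the $x_i$'s and $y_j$'s along the $z_3\text{--}z_5$ and $z_9\text{--}z_{11}$ arcs, walk the tokens $m+1$ and $m+2$ down to $z_5$ and $z_{11}$, perform a crossover, and finish with another Wilson application on $G^{**}$. Steps (i), (ii) and (iv) are essentially identical to the paper.

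The gap is in your crossover step (iii). You propose applying Wilson's theorem to the residual graph $G^{**}\setminus\{z_5,z_{11}\}$, but this graph is \emph{not} biconnected. Deleting $z_5$ and $z_{11}$ cuts the large cycle into two arcs, one running clockwise from $z_6$ to $z_{10}$ and the other from $z_{12}$ to $z_4$; the only chords joining these arcs are $\{z_1,z_6\}$ and $\{z_7,z_{12}\}$. Removing the single vertex $z_1$ then strands the segment containing $z_2,z_3,z_4$ (the chord $\{z_2,z_4\}$ lives entirely inside this segment, and $\{z_1,z_6\}$ has been destroyed), so $z_1$ is a cut vertex; symmetrically $z_7$ is a cut vertex. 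Hence Wilson's theorem does not apply to $G^{**}\setminus\{z_5,z_{11}\}$, and your rearrangement ``bring $w$ to $z_3$'' is not justified. The same failure afflicts $G^{**}\setminus\{z_5\}$, so the second Wilson call in your sketch breaks as well. The features of $G^*$ you list (the chords $\{z_1,z_6\}$, $\{z_7,z_{12}\}$, the odd cycle through $\{z_2,z_4\}$, etc.) do guarantee that removing \emph{one} of $z_5,z_{11}$ preserves connectedness, but not biconnectedness.

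The paper repairs exactly this point by working instead with $G^{***}:=G^*\vert_{[m+2]\setminus\{z_5,z_{11}\}}$, i.e.\ \emph{including} the vertices $m+1$ and $m+2$. In $G^{***}$ one has the additional path $z_3-(m{+}1)-(m{+}2)-z_9$, which links the stranded segment near $z_3$ to the other arc near $z_9$ and thereby restores biconnectedness; the odd cycle through $\{z_2,z_4\}$ still makes it non-bipartite, so $G^{***}$ is Wilsonian. Since the tokens $m+1,m+2$ sit frozen at $z_5,z_{11}$, the remaining token set on $V(G^{***})$ is exactly $[m]$, so Wilson on $(G^{***},H^{**})$ lets one place $x_1$ at vertex $m+2$ and $y_1$ at vertex $m+1$ directly. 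The two swaps $(m+1)x_1$ and $(m+2)y_1$ (across $\{z_5,m+2\}$ and $\{z_{11},m+1\}$) then land $m+1$ on vertex $m+2$ and $m+2$ on vertex $m+1$ in one clean stroke, replacing your longer gadget sequence.
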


\begin{proof}
Let $G^{**}=G^*\vert_{[m]}$ as above, and let $G^{***}=G^*\vert_{[m+2]\setminus\{z_5,z_{11}\}}$. It is easy to see that $G^{**}$ and $G^{***}$ are biconnected, not cycles, and not equal to the graph $\theta_0$ from Theorem~\ref{thm:wilson}. The smaller arc of the large cycle in $G^*$ connecting $z_2$ to $z_4$ has an even number of edges by construction, so adding the edge $\{z_2,z_4\}$ to that arc produces a cycle of odd length. It follows that $G^{**}$ and $G^{***}$ are not bipartite, so they are both Wilsonian (see Definition~\ref{def:wilsonian}). 

Let $\tau_1:[m+2]\to [m+2]$ be a bijection satisfying the following: 
\begin{itemize}
    \item The vertices $\tau_1^{-1}(x_1),\ldots,\tau_1^{-1}(x_\ell)$ appear in this order consecutively along the large cycle in $G^*$, with $\tau_1^{-1}(x_1)=z_3$ and $\tau_1^{-1}(x_\ell)=z_5$. 
    \item The vertices $\tau_1^{-1}(y_1),\ldots,\tau_1^{-1}(y_\ell)$ appear in this order consecutively along the large cycle in $G^*$, with $\tau_1^{-1}(y_1)=z_9$ and $\tau_1^{-1}(y_\ell)=z_{11}$.
    \item We have $\tau_1^{-1}(m+1)=m+1$ and $\tau_1^{-1}(m+2)=m+2$.
\end{itemize}
Since $H^{**}=H^*\vert_{[m]}$ is a star graph and $G^{**}$ is Wilsonian, we can use Wilson's Theorem~\ref{thm:wilson} to see that there exists a sequence $\Sigma_1$ of $(G^{**},H^{**})$-friendly swaps that transforms the bijection $\Id:[m]\to [m]$ into $\tau_1\vert_{[m]}$. We will view $\Sigma_1$ as a sequence of $(G^*,H^*)$-friendly swaps that does not involve $m+1$ or $m+2$; then $\Sigma_1$ transforms $\Id:[m+2]\to [m+2]$ into $\tau_1$. 

Applying the sequence \[\Sigma_2=(m+1)x_1,(m+1)x_2,\ldots,(m+1)x_\ell,(m+2)y_1,(m+2)y_2,\ldots,(m+2)y_\ell\] of $(G^*,H^*)$-friendly swaps transforms $\tau_1$ into a bijection $\tau_2:[m+2]\to[m+2]$ satisfying $\tau_2^{-1}(m+1)=z_5$ and $\tau_2^{-1}(m+2)=z_{11}$. Now let $\tau_3:[m+2]\to[m+2]$ be a bijection satisfying $\tau_3^{-1}(x_1)=m+2$, $\tau_3^{-1}(y_1)=m+1$, $\tau_3^{-1}(m+1)=z_5$, and $\tau_3^{-1}(m+2)=z_{11}$. Because $H^{**}=H^*\vert_{[m]}$ is a star graph and $G^{***}=G^*\vert_{[m+2]\setminus\{z_5,z_{11}\}}$ is Wilsonian, Wilson's Theorem~\ref{thm:wilson} guarantees the existence of a sequence $\Sigma_3$ of $(G^{***},H^{**})$-friendly swaps that transforms the bijection $\tau_2\vert_{[m+2]\setminus\{z_5,z_{11}\}}$ into $\tau_3\vert_{[m+2]\setminus\{z_5,z_{11}\}}$. We will view $\Sigma_3$ as a sequence of $(G^*,H^*)$-friendly swaps that does not involve $m+1$ or $m+2$; then $\Sigma_3$ transforms $\tau_2$ into $\tau_3$. We can now apply the $(G^*,H^*)$-friendly swaps $(m+1)x_1$ and $(m+2)y_1$ in order to transform $\tau_3$ into a bijection $\tau_4:[m+2]\to[m+2]$ satisfying $\tau_4^{-1}(m+1)=m+2$ and $\tau_4^{-1}(m+2)=m+1$. Finally, let $\tau_5:[m+2]\to[m+2]$ be the transposition that sends $m+1$ to $m+2$, sends $m+2$ to $m+1$, and sends $i$ to $i$ for all $i\in[m]$. We can once again use Wilson's Theorem~\ref{thm:wilson} with the star graph $H^{**}$ and the Wilsonian graph $G^{**}$ to see that there is a sequence $\Sigma_4$ of $(G^{**},H^{**})$-friendly swaps transforming $\tau_4\vert_{[m]}$ into the bijection $\Id:[m]\to [m]$. We can view $\Sigma_4$ as a sequence of $(G^*,H^*)$-friendly swaps that transforms $\tau_4$ into $\tau_5$. Putting this all together, we see that applying the sequence \[\Sigma_1,\Sigma_2,\Sigma_3,(m+1)x_1,(m+2)y_1,\Sigma_4\] of $(G^*,H^*)$-friendly swaps to $\Id:[m+2]\to[m+2]$ exchanges $m+1$ and $m+2$.  
\end{proof}

\begin{lemma}\label{Lem1}
Let $n$ be a large positive integer, and let $m$, $\ell$, $G^{**}$, and $H^{**}$ be as described above. Let $\Gamma=\{x_1,\ldots,x_\ell,y_1,\ldots,y_\ell,z_3,z_5,z_9,z_{11}\}$. Let $q_i=\left\lfloor pn/(3\ell)\right\rfloor$ for all $i\in \Gamma$, and let $q_i=\left\lfloor n/(2m)\right\rfloor$ for all $i\in[m]\setminus\Gamma$. Let $p=p(n)$ be a probability such that \[p\geq\frac{\exp(2(\log n)^{2/3})}{n^{1/2}}.\] Let $X$ and $Y$ be independently-chosen random graphs in $\mathcal G(n,p)$. If $n$ is sufficiently large, then the probability that the pair $(G^{**},H^{**})$ is $(q_1,\ldots,q_m)$-embeddable in $(X,Y)$ is at least $1-n^{-n/3}$.
\end{lemma}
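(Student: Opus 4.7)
My plan is to apply Lemma~\ref{Lem:Embeddable} to the pair $(G^{**},H^{**})$ with the specified sizes $q_1,\ldots,q_m$. The total size satisfies
\[
Q = \sum_{j\in[m]} q_j \leq (2\ell+4)\cdot\frac{pn}{3\ell} + (m-2\ell-4)\cdot\frac{n}{2m} \leq \frac{2pn}{3} + \frac{n}{2} < n
\]
for $n$ large (since $p\to 0$), while $Q\geq n/3$, so Lemma~\ref{Lem:Embeddable} will deliver probability at least $1-n^{-Q}\geq 1-n^{-n/3}$. The core task is then to verify, for every $J\subseteq[m]$ with $\beta(J):=|E(G^{**}\vert_J)|+|E(H^{**}\vert_J)|\geq 1$,
\[
p^{\beta(J)}\prod_{j\in J} q_j \;\geq\; 3\cdot 2^{m+1} Q\log n.
\]
Writing $a=|J\cap\Gamma|$ and $b=|J\setminus\Gamma|$, substituting the lower bound $p\geq n^{-1/2}\exp(2(\log n)^{2/3})$, and noting that the factors $(3\ell)^a(2m)^b\leq(3m)^m$ contribute only $\exp(O((\log n)^{2/3}\log\log n))$, this inequality reduces on the log scale to
\[
\tfrac{1}{2}\bigl(a+2b-\beta(J)\bigr)\log n + 2\bigl(a+\beta(J)\bigr)(\log n)^{2/3} \;\geq\; \log n + O\!\bigl((\log n)^{2/3}\log\log n\bigr).
\]

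I then split the verification into two regimes according to $|J|$.

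\emph{Small $|J|$, i.e.\ $|J|<\ell$.} Here $|J|<\ell<m/6$, so $G^{**}\vert_J$ is a forest by the girth condition. Moreover, the construction of $G^{**}$ arranges that every two vertices of $(\Gamma\cup\{w\})\cap J$ are at $G^{**}$-distance at least $\ell-1$: pairs inside $\Gamma_1:=\{w,x_i,y_j\}$ or across $\Gamma_1\leftrightarrow\Gamma_2:=\{z_3,z_5,z_9,z_{11}\}$ are at distance at least $m/(3\ell)=\tfrac{4}{3}\ell$, and the only close pairs inside $\Gamma_2$ are $\{z_3,z_5\}$ and $\{z_9,z_{11}\}$ at distance exactly $\ell-1$ (any short-cut through the chords $\{z_2,z_4\}$ or $\{z_8,z_{10}\}$ is blocked by the girth condition, which forces those chords to span cycle-arcs of length at least $m/6-1$). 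Since $|J|-1<\ell-1$, no path of $G^{**}\vert_J$ can join two such vertices, so they each lie in their own component, giving $|E(G^{**}\vert_J)|\leq|J|-|(\Gamma\cup\{w\})\cap J|$. When $w\notin J$ this yields $\beta(J)=|E(G^{**}\vert_J)|\leq b$, and $\beta(J)\geq 1$ forces $|J|\geq 2$, so $a+2b-\beta(J)\geq a+b\geq 2$. When $w\in J$, combining with $|E(H^{**}\vert_J)|=|J|-1$ gives $\beta(J)\leq(b-1)+(|J|-1)=a+2b-2$. Either way the first displayed term alone wins.

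\emph{Large $|J|$, i.e.\ $|J|\geq\ell$.} Here the exponential correction in $p$ must carry the argument. For $|J|<m/6$, $G^{**}\vert_J$ is still a forest; the key observation is that any two vertices of $(\Gamma\cup\{w\})\cap J$ lying in a common tree component of $G^{**}\vert_J$ must be connected by a $G^{**}\vert_J$-path of length at least $\ell-1$. A Steiner-type count then shows that a component containing $k+1$ vertices of $(\Gamma\cup\{w\})\cap J$ must contain at least $(k+1)(\ell-1)/2+1$ vertices of $J$; summing over components yields the deficit bound
\[
\delta(J) := \beta(J)-(a+2b)+2 \;\leq\; \frac{2|J|}{\ell-1},
\]
so $\tfrac{1}{2}\delta(J)\log n \leq |J|\log n/(\ell-1) = O(|J|(\log n)^{2/3})$. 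Meanwhile $a+\beta(J)\geq 2|J|-1\geq 2\ell-1$, which makes the exponential term $2(a+\beta(J))(\log n)^{2/3}\approx 4|J|(\log n)^{2/3}$: it beats the deficit by a factor of roughly $2$ and covers the trailing $\log n$ on the right. The remaining case $|J|\geq m/6$ is even easier, since then $|J|=\Theta(m)=\Theta((\log n)^{2/3})$ makes the exponential term of order $(\log n)^{4/3}\gg\log n$.

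The main obstacle is the Steiner-type merge-counting in the large-$|J|$ regime, which relies on the pairwise-distance conditions to show that each ``merge'' of $(\Gamma\cup\{w\})$-vertices into one component of $G^{**}\vert_J$ costs roughly $\ell$ non-$\Gamma$ intermediate vertices. The choices made in the construction---$\ell=\lfloor\sqrt{m}/2\rfloor$, girth $\geq m/6$, and pairwise distances $\geq m/(3\ell)$---are calibrated precisely so that the resulting deficit bound $\delta(J)=O(|J|/\ell)$ matches the $\exp(2(\log n)^{2/3})$ surplus in $p$ at the critical threshold $|J|\sim\ell$.
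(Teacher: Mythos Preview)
Your overall strategy is the same as the paper's---apply Lemma~\ref{Lem:Embeddable} and verify the Janson hypothesis for every $J$---and your Steiner-tree counting is a valid way to bound the ``deficit'' $\delta(J)$. But the execution has genuine gaps that the paper's organization avoids.

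First, your large-$|J|$ analysis silently assumes $w\in J$. The claim $a+\beta(J)\geq 2|J|-1$ relies on $|E(H^{**}\vert_J)|=|J|-1$, which holds only when $w\in J$; if $w\notin J$ then $H^{**}\vert_J$ has no edges, so $\beta(J)=|J|-c(J)$ and $a+\beta(J)$ can be as small as $1$. The paper handles this with a clean preliminary reduction you are missing: if $w\notin J$ then (since no two vertices of $\Gamma$ are adjacent in $G^{**}$) some $G^{**}$-edge of $J$ has a non-$\Gamma$ endpoint $t_1$, and replacing $t_1$ by $w$ gives $J'$ with $|J'|=|J|$, $\gamma(J')=\gamma(J)$, and $\beta(J')\geq\beta(J)$ (because $w$ gains $|J|-1$ star-edges while $t_1$ had at most $|J|-1$ incident $G^{**}$-edges). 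This lets one assume $w\in J$ throughout.

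Second, the case $|J|\geq m/6$ is \emph{not} ``even easier''. Here $G^{**}\vert_J$ need not be a forest, and your displayed inequality has a first term $\tfrac12(a+2b-\beta(J))\log n$ that can be as negative as roughly $-\ell\log n\approx -\tfrac12(\log n)^{4/3}$ (take $|E(G^{**}\vert_J)|$ near its maximum and $a$ near $2\ell$), which is of the \emph{same order} as the exponential term you want to invoke. The paper's Case~1 makes the trade-off precise by using the structural fact $|E(G^{**}\vert_J)|\leq|J|+4$ (cycle plus four chords), giving $\alpha(J)+\gamma(J)-|J|-1\leq 2\ell+7$; only then does the $(p^2n/(2m))^{|J|}$ factor, of order $\exp(\tfrac23(\log n)^{4/3})$, win by the margin $\tfrac16(\log n)^{4/3}$. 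Your sketch omits this bound entirely.

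Finally, in the small-$|J|$ regime, ``the first displayed term alone wins'' is not quite true at the boundary $a+2b-\beta(J)=2$ (which does occur, e.g.\ $J=\{w,z\}$ with $z$ a cycle-neighbour of $w$): the first term then equals $\log n$ exactly, and your stated error $O((\log n)^{2/3}\log\log n)$ is larger than the second term $2(a+\beta(J))(\log n)^{2/3}$ when $a+\beta(J)=O(1)$. This is repairable---in the small regime the true error is only $O((\log n)^{2/3})$ since $(3\ell)^a(2m)^b\leq(3m)^{\ell}$---but as written it is a gap.
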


\begin{proof}
Let $Q=q_1+\cdots+q_m$, and note that $n/3\leq Q\leq n$. For each set $J\subseteq [m]$, let $\beta(J)=|E(G^{**}\vert_J)|+|E(H^{**}\vert_J)|$ and $\gamma(J)=|J\cap \Gamma|$. 
The proof will follow from Lemma~\ref{Lem:Embeddable} if we can show that $p^{\beta(J)}\prod_{j\in J}q_j\geq 3\cdot 2^{m+1}n\log n$ for every $J\subseteq[m]$ satisfying $\beta(J)\geq 1$. Using the definitions of $q_1,\ldots,q_m$ (and ignoring floor symbols), we can rewrite this inequality as 
\begin{equation}\label{Eq11}
    p^{\beta(J)}\left(\frac{pn}{3\ell}\right)^{\gamma(J)}\left(\frac{n}{2m}\right)^{|J|-\gamma(J)}\geq 3\cdot 2^{m+1}n\log n.
\end{equation}

Let us first assume $J$ is such that $\beta(J)\geq 1$ and $w\not\in J$. The graph $H^{**}\vert_{J}$ has no edges, so the assumption that $\beta(J)\geq 1$ guarantees that there is an edge $\{t_1,t_2\}\in E(G^{**})$ such that $t_1,t_2\in J$. We constructed $G^{**}$ so that no two of the vertices in $\Gamma$ are adjacent, so one of the vertices in $\{t_1,t_2\}$, say $t_1$, is not in $\Gamma$. Now let $J'=(J\setminus\{t_1\})\cup\{w\}$, and observe that $|J'|=|J|$ and $\gamma(J')=\gamma(J)$. There are $|J|-1$ edges in $H^{**}\vert_{J'}$ and at most $|J|-1$ edges in $G^{**}\vert_J$ that are incident to $t_1$, so $\beta(J')\geq \beta(J)$. Consequently, \[p^{\beta(J)}\left(\frac{pn}{3\ell}\right)^{\gamma(J)}\left(\frac{n}{2m}\right)^{|J|-\gamma(J)}\geq p^{\beta(J')}\left(\frac{pn}{3\ell}\right)^{\gamma(J')}\left(\frac{n}{2m}\right)^{|J'|-\gamma(J')}.\] 

The previous paragraph demonstrates that in order to prove the lemma, it suffices to prove \eqref{Eq11} for all sets $J\subseteq[m]$ satisfying $\beta(J)\geq 1$ and $w\in J$. Assume $J$ satisfies these conditions, and observe that $|E(H^{**}\vert_J)|=|J|-1$ since $w\in J$. Let $\alpha(J)=|E(G^{**}\vert_J)|=\beta(J)-|J|+1$. With this notation, \[p^{\beta(J)}\left(\frac{pn}{3\ell}\right)^{\gamma(J)}\left(\frac{n}{2m}\right)^{|J|-\gamma(J)}= p^{\alpha(J)+\gamma(J)-|J|-1}\left(\frac{p^2n}{2m}\right)^{|J|}\left(\frac{2m}{3\ell}\right)^{\gamma(J)}\geq p^{\alpha(J)+\gamma(J)-|J|-1}\left(\frac{p^2n}{2m}\right)^{|J|}.\] Therefore, it suffices to prove that 
\begin{equation}\label{Eq12}
p^{\alpha(J)+\gamma(J)-|J|-1}\left(\frac{p^2n}{2m}\right)^{|J|}\geq 3\cdot 2^{m+1}n\log n
\end{equation}
for all sets $J\subseteq[m]$ satisfying $\beta(J)\geq 1$ and $w\in J$. To do this, we consider four cases. In what follows, let $c(J)$ be the number of connected components of $G^{**}\vert_J$. Let us also recall that $m=(\log n)^{2/3}$ and $\ell=\sqrt{m}/2=(\log n)^{1/3}/2$ (ignoring floor symbols). Furthermore, note that $p^2n/(2m)$ is certainly greater than $1$.

\noindent {\bf Case 1:} Suppose $|J|\geq m/6$. It follows from the construction of $G^{**}$ that $\alpha(J)\leq |J|+4$, so \[p^{\alpha(J)+\gamma(J)-|J|-1}\left(\frac{p^2n}{2m}\right)^{|J|}\geq p^{\gamma(J)+3}\left(\frac{p^2n}{2m}\right)^{|J|}\geq p^{|\Gamma|+3}\left(\frac{p^2n}{2m}\right)^{m/6}=p^{2\ell+7}\left(\frac{p^2n}{2m}\right)^{m/6}\] \[\geq n^{-(\ell+7/2)}\left(\frac{p^2n}{2m}\right)^{m/6} \geq n^{-(\ell+7/2)}\left(\frac{\exp(4(\log n)^{2/3})}{2m}\right)^{m/6}\] \[=\exp\left(-\left(\frac{1}{2}\left(\log n\right)^{1/3}+\frac{7}{2}\right)\log n\right)\left(\frac{\exp(4(\log n)^{2/3})}{2(\log n)^{2/3}}\right)^{(\log n)^{2/3}/6}\] \[=\exp\left(\frac{2}{3}(\log n)^{4/3}-\frac{1}{2}(\log n)^{4/3}+O(\log n)\right)=\exp\left(\frac{1}{6}(\log n)^{4/3}+O(\log n)\right),\] and this is certainly greater than $3\cdot 2^{m+1}n\log n$ if $n$ is sufficiently large. 

\noindent {\bf Case 2:} Suppose $|J|<m/6$ and $\gamma(J)\geq c(J)+2$. Because the graph $G^{**}$ has girth at least $m/6$, the induced subgraph $G^{**}\vert_J$ must be a forest. This implies that $\alpha(J)=|J|-c(J)$. Now recall that for any distinct $s_1,s_2\in\Gamma\cup\{w\}$, if $\{s_1,s_2\}\neq\{z_3,z_5\}$ and $\{s_1,s_2\}\neq\{z_9,z_{11}\}$, then the distance between $s_1$ and $s_2$ in $G^{**}$ is at least $m/(3\ell)$. Since $w\in J$, it is straightforward to check that $|J|\geq (\gamma(J)-c(J)-1)m/(3\ell)$; indeed, if a connected component of $G^{**}|_J$ contains $k$ elements of $(\Gamma \cup \{w\}) \setminus \{z_3, z_9\}$, then this connected component must contain at least $(k-1)m/(3\ell)$ vertices. Therefore, \[p^{\alpha(J)+\gamma(J)-|J|-1}\left(\frac{p^2n}{2m}\right)^{|J|}=p^{\gamma(J)-c(J)-1}\left(\frac{p^2n}{2m}\right)^{|J|}\geq p^{\gamma(J)-c(J)-1}\left(\frac{p^2n}{2m}\right)^{(\gamma(J)-c(J)-1)m/(3\ell)}\] \[\geq\left(\frac{(p^2n)^{m/(3\ell)}}{(2m)^{m/(3\ell)}}n^{-1/2}\right)^{\gamma(J)-c(J)-1}= \left((p^2n)^{m/(3\ell)}n^{-1/2+o(1)}\right)^{\gamma(J)-c(J)-1}\]
\[=\left(\left(\exp\left(4\left(\log n\right)^{2/3}\right)\right)^{(2/3)(\log n)^{1/3}}n^{-1/2+o(1)}\right)^{\gamma(J)-c(J)-1}\] \[=\left(n^{8/3-1/2+o(1)}\right)^{\gamma(J)-c(J)-1}\geq n^{13/6+o(1)},\] and this is greater than $3\cdot 2^{m+1}n\log n$ if $n$ is sufficiently large. 

\noindent {\bf Case 3:} Suppose $|J|<m/6$ and $c(J)\leq\gamma(J)\leq c(J)+1$. As in the previous case, the lower bound on the girth of $G^{**}$ forces $G^{**}\vert_J$ to be a forest, so $\alpha(J)=|J|-c(J)$. The number of elements of $\Gamma\cup\{w\}$ that are in $J$ is $\gamma(J)+1$, which is at least $c(J)+1$. This means that some connected component of $G^{**}\vert_J$ contains at least $2$ elements of $\Gamma\cup\{w\}$. The minimum distance in $G^{**}$ between any two elements of $\Gamma\cup\{w\}$ is $\ell-1$, so $|J|\geq \ell$. It follows that \[p^{\alpha(J)+\gamma(J)-|J|-1}\left(\frac{p^2n}{2m}\right)^{|J|}\geq p^{\gamma(J)-c(J)-1}\left(\frac{p^2n}{2m}\right)^{\ell}\geq\left(\frac{p^2n}{2m}\right)^{\ell}=\frac{\exp\left(4\ell\left(\log n\right)^{2/3}\right)}{(2m)^\ell}\] \[=\frac{\exp\left(2\left(\log n\right)\right)}{(2m)^\ell}=n^{2+o(1)},\] and this is greater than $3\cdot 2^{m+1}n\log n$ if $n$ is sufficiently large. 

\noindent {\bf Case 4:} Suppose $|J|<m/6$ and $\gamma(J)\leq c(J)-1$. As in the previous two cases, the assumption that $|J|<m/6$ forces $G^{**}\vert_J$ to be a forest so that $\alpha(J)=|J|-c(J)$. Furthermore, $|J|\geq 2$ because $\beta(J)\geq 1$. Consequently, \[p^{\alpha(J)+\gamma(J)-|J|-1}\left(\frac{p^2n}{2m}\right)^{|J|}=p^{\gamma(J)-c(J)-1}\left(\frac{p^2n}{2m}\right)^{|J|}\geq p^{-2}\left(\frac{p^2n}{2m}\right)^{2}=\frac{p^2n^2}{(2m)^2}\] \[=\frac{\exp\left(4(\log n)^{2/3}\right)n}{(2m)^2}=\frac{e^{4m}n}{(2m)^2},\] and this is greater than $3\cdot 2^{m+1}n\log n$ if $n$ is sufficiently large. 
\end{proof}

We can now complete the proof of Theorem~\ref{thm:random}. 

\begin{proposition}
Let $X$ and $Y$ be independently-chosen random graphs in $\mathcal G(n,p)$. If \[p\geq\frac{\exp(2(\log n)^{2/3})}{n^{1/2}},\] then $\FS(X,Y)$ is connected with high probability.  
\end{proposition}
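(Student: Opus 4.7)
The plan is to invoke Lemma~\ref{lem:exchangeable-K_n}. Since $p \geq \exp(2(\log n)^{2/3})/n^{1/2}$ is well above the connectivity threshold, $X$ is connected with high probability by classical Erd\H{o}s--R\'enyi theory. It thus suffices to show that, with high probability, for every edge $\{u',v'\} \in E(X)$ and every bijection $\sigma\colon V(X) \to V(Y)$, the vertices $u = \sigma(u')$ and $v = \sigma(v')$ are $(X,Y)$-exchangeable from $\sigma$. The strategy is to realize such an exchange by embedding the gadget $(G^*, H^*)$ from Lemma~\ref{Lem2} into $(X, Y)$ with $m+1 \mapsto u$ and $m+2 \mapsto v$, and then lifting the exchange sequence guaranteed by that lemma.

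I condition on two events, each of which holds with high probability: the embeddability event of Lemma~\ref{Lem1}, namely that $(G^{**}, H^{**})$ is $(q_1, \ldots, q_m)$-embeddable in $(X,Y)$; and the degree-regularity event, namely that every vertex of $X$ and every vertex of $Y$ has degree at least $(1-o(1))pn$, which follows from a Chernoff bound and a union bound. Given $\sigma$ and $\{u',v'\}$, I then choose pairwise disjoint sets $V_1, \ldots, V_m \subseteq V(Y) \setminus \{u, v\}$ with $|V_i| = q_i$, subject to the extra reservoir constraints
\[V_{x_i} \subseteq N_Y(u), \quad V_{y_j} \subseteq N_Y(v), \quad V_{z_3}, V_{z_{11}} \subseteq \sigma(N_X(u')), \quad V_{z_5}, V_{z_9} \subseteq \sigma(N_X(v')),\]
with the remaining $V_i$ (for $i \in [m]\setminus\Gamma$) placed freely in the leftover vertices of $V(Y)\setminus\{u,v\}$. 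This selection is possible because each of the four reservoirs has size $(1-o(1))pn$, while the total demand from the constrained sets is at most $(2\ell + 4)\lfloor pn/(3\ell)\rfloor \leq pn$ for $\ell$ large; thus a greedy selection succeeds even in the worst case where the reservoirs coincide, and the remaining sets occupy at most $n/2$ vertices and easily fit.

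By the embeddability event, there exist vertices $v_i \in V_i$ making $\psi\colon i \mapsto v_i$ (extended by $\psi(m+1) = u$, $\psi(m+2) = v$) a graph embedding $H^* \hookrightarrow Y$, and $\varphi = \sigma^{-1}\circ\psi$ (extended by $\varphi(m+1) = u'$, $\varphi(m+2) = v'$) a graph embedding $G^* \hookrightarrow X$: the edge conditions from $G^{**}$ and $H^{**}$ come directly from embeddability, while the reservoir constraints certify exactly the extra edges of $G^*$ and $H^*$ incident to $m+1$ or $m+2$. By Lemma~\ref{Lem2}, there is a sequence of $(G^*, H^*)$-friendly swaps exchanging $m+1$ and $m+2$ when applied to $\Id$; replacing each swap $ij$ in this sequence by the swap $\psi(i)\psi(j)$ yields a sequence of $(X,Y)$-friendly swaps that exchanges $u$ and $v$ from $\sigma$, since at each step the $\varphi$-image of the $G^*$-edge being used lies in $E(X)$ and the $\psi$-images of the adjacent-in-$H^*$ values being swapped lie in $E(Y)$. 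Thus $u$ and $v$ are $(X,Y)$-exchangeable from $\sigma$, and Lemma~\ref{lem:exchangeable-K_n} concludes the proof. The main obstacle is the reservoir-selection step above; everything else is bookkeeping to lift Lemma~\ref{Lem2} along the embedding.
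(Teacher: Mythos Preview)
Your proof is correct and follows essentially the same approach as the paper's own proof: condition on the embeddability event from Lemma~\ref{Lem1} and on degree regularity, then for each $\sigma$ and edge $\{u',v'\}\in E(X)$ select the reservoir sets $V_1,\ldots,V_m$ exactly as you describe, invoke embeddability to obtain the vertices $v_i$, and lift the exchange sequence from Lemma~\ref{Lem2} along the resulting embeddings $\psi$ and $\varphi=\sigma^{-1}\circ\psi$ before concluding via Lemma~\ref{lem:exchangeable-K_n}. Your reservoir-selection argument is in fact slightly more explicit than the paper's, which simply asserts the choice is possible because $q_i=\lfloor pn/(3\ell)\rfloor$ for $i\in\Gamma$.
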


\begin{proof}
Let $n$ be a large positive integer, and let $m$, $\ell$, $G^*$, $G^{**}$, $H^*$, and $H^{**}$ be as described above. Let $\Gamma$ and $q_1,\ldots,q_m$ be as in the statement of Lemma~\ref{Lem1}. We may assume that the pair $(G^{**},H^{**})$ is $(q_1,\ldots,q_m)$-embeddable in $(X,Y)$ since Lemma~\ref{Lem1} tells us that this happens with high probability. Because $p$ is much larger than $\log n/n$, it is well known that with high probability, $X$ and $Y$ are connected and the degrees of all vertices in $X$ and $Y$ are $pn(1+o(1))$; hence, we may assume $X$ and $Y$ have these properties. 

Choose vertices $u,v\in V(Y)$ and a bijection $\sigma:V(X)\to V(Y)$ such that $\{\sigma^{-1}(u),\sigma^{-1}(v)\}\in E(X)$. Let us choose pairwise disjoint subsets $V_1,\ldots,V_m$ of $V(Y)\setminus\{u,v\}$ such that 
\begin{itemize}
    \item $|V_i|=q_i$ for all $i\in[m]$;
    \item $V_{x_1},\ldots,V_{x_\ell}$ are all contained in the neighborhood of $u$ in $Y$;
    \item $V_{y_1},\ldots,V_{y_\ell}$ are all contained in the neighborhood of $v$ in $Y$;
    \item $V_{z_3}$ and $V_{z_{11}}$ are contained in the neighborhood of $\sigma^{-1}(u)$ in $X$;
    \item $V_{z_5}$ and $V_{z_9}$ are contained in the neighborhood of $\sigma^{-1}(v)$ in $X$. 
\end{itemize}    
Note that such a choice is possible because $q_i=\left\lfloor pn/(3\ell)\right\rfloor$ for all $i\in\Gamma$. Because the pair $(G^{**},H^{**})$ is $(q_1,\ldots,q_m)$-embeddable in $(X,Y)$, it must be the case that $(G^{**},H^{**})$ is embeddable in $(X,Y)$ with respect to the sets $V_1,\ldots,V_m$ and the bijection $\sigma$. This means that there exist vertices $v_i\in V_i$ for all $i\in[m]$ such that for all $i,j\in [m]$, we have 
\[\{i,j\}\in E(H^{**})\implies\{v_i,v_j\}\in E(Y)\quad\text{and}\quad\{i,j\}\in E(G^{**})\implies\{\sigma^{-1}(v_i),\sigma^{-1}(v_j)\}\in E(X).\] 

Define a map $\psi:V(H^*)\to V(Y)$ by $\psi(m+1)=u$, $\psi(m+2)=v$, and $\psi(i)=v_i$ for all $i\in [m]$. Define $\varphi: V(G^*)\to V(X)$ by $\varphi=\varphi\circ\Id=\sigma^{-1}\circ\psi$. Because the vertices $v_1,\ldots,v_m,u,v$ are distinct, the maps $\psi$ and $\varphi$ are injective. It is immediate from our construction that $\psi$ is a graph embedding of $H^*$ into $Y$ that sends $m+1$ to $u$ and sends $m+2$ to $v$. Similarly, $\varphi$ is a graph embedding of $G^*$ into $X$ that sends $m+1$ to $\sigma^{-1}(u)$ and sends $m+2$ to $\sigma^{-1}(v)$. Lemma~\ref{Lem2} tells us that there is a sequence $\Sigma$ of $(G^*,H^*)$-friendly swaps that we can apply to the identity bijection $\Id:[m+2]\to [m+2]$ in order to exchange $m+1$ and $m+2$. Using the graph embeddings $\psi$ and $\varphi=\sigma^{-1}\circ\psi$, we can transfer $\Sigma$ to a sequence of $(X,Y)$-friendly swaps that we can apply to $\sigma$ in order to exchange $u$ and $v$. This proves that $u$ and $v$ are $(X,Y)$-exchangeable from $\sigma$; as $u$, $v$, and $\sigma$ were arbitrary (subject to the condition $\{\sigma^{-1}(u),\sigma^{-1}(v)\}\in E(X)$), it follows from Lemma~\ref{lem:exchangeable-K_n} that $\FS(X,Y)$ is connected. 
\end{proof}

\section{Random Bipartite Graphs}\label{Sec:RandomBipartite}

\subsection{Disconnectedness with high probability}

The following proposition implies the first statement in Theorem~\ref{thm:random}. 

\begin{proposition}
Fix some small $\varepsilon>0$, and let $X$ and $Y$ be independently-chosen random bipartite graphs in $\mathcal G(K_{r,r},p)$. If \[p=p(r)\leq\frac{1-\varepsilon}{r^{1/2}},\] then the friends-and-strangers graph $\FS(X,Y)$ has an isolated vertex with high probability. 
\end{proposition}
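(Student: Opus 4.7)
The plan is to prove this proposition by directly applying the Sauer--Spencer edge-disjoint placement theorem to $X$ and $Y$ viewed as graphs on $n = 2r$ vertices, following the template of the proof of Proposition~\ref{Prop:NoIsolated}. Recall that the Sauer--Spencer theorem guarantees a bijection $\sigma \colon V(X) \to V(Y)$ that is an isolated vertex of $\FS(X,Y)$ whenever $2\Delta(X)\Delta(Y) < n$. The one modification required is accounting for the different relationship between $p$ and $\Delta$ in the bipartite random model: since each vertex of a graph in $\mathcal{G}(K_{r,r}, p)$ has only $r$ potential neighbors (all in the opposite part), its degree is distributed as $\mathrm{Bin}(r,p)$ with mean $pr$ rather than as $\mathrm{Bin}(n-1, p)$.

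As in the non-bipartite proof, I would first reduce to the extremal case $p = (1-\varepsilon)/r^{1/2}$, since making $p$ smaller only decreases $\Delta(X)$ and $\Delta(Y)$ and therefore only strengthens the hypothesis of Sauer--Spencer. I would then apply a standard Chernoff bound to each vertex degree of each graph, together with a union bound over the $4r$ vertices in $X$ and $Y$, to conclude that $\Delta(X), \Delta(Y) \leq (1 + o(1))pr$ with high probability. The concentration is comfortable because $pr \asymp r^{1/2} \gg \log r$. Plugging this into the Sauer--Spencer product then yields
\[
2\Delta(X)\Delta(Y) \leq 2(1+o(1))^2 (pr)^2 \leq 2(1-\varepsilon)^2 (1 + o(1))\, r < 2r = n
\]
for all sufficiently large $r$. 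The Sauer--Spencer theorem therefore produces a bijection $\sigma \colon V(X) \to V(Y)$ such that no edge of $X$ is mapped to an edge of $Y$, and such a $\sigma$ is by definition an isolated vertex of $\FS(X,Y)$.

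I do not anticipate any real obstacle here; the argument is essentially a line-by-line translation of the proof of Proposition~\ref{Prop:NoIsolated}. The only conceptual point worth emphasizing is that the difference between $\mathrm{Bin}(r,p)$ and $\mathrm{Bin}(2r-1,p)$ is precisely the reason why the bipartite threshold $1/r^{1/2}$ is larger (by a factor of $2$) than the non-bipartite threshold $2^{-1/2}/n^{1/2} = 1/(2r^{1/2})$ appearing in Theorem~\ref{thm:random}: halving the typical maximum degree doubles the value of $p$ that the Sauer--Spencer inequality can accommodate.
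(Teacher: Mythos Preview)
Your proposal is correct and follows essentially the same argument as the paper: reduce to the extremal value of $p$, invoke the Sauer--Spencer theorem with $n=2r$, and use that $\Delta(X),\Delta(Y)=(1+o(1))pr$ with high probability to verify $2\Delta(X)\Delta(Y)<2r$. Your added remarks on the Chernoff/union-bound justification and on the factor-of-two discrepancy with the non-bipartite threshold are accurate and helpful elaborations.
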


\begin{proof}
The proof is essentially the same as that of Proposition~\ref{Prop:NoIsolated}. It follows from \cite{Sauer} that if $G$ and $H$ are $n$-vertex graphs with maximum degrees $\Delta(G)$ and $\Delta(H)$ satisfying $2\Delta(G)\Delta(H)<n$, then $\FS(G,H)$ has an isolated vertex. Now consider the random graphs $X$ and $Y$, each of which has $2r$ vertices. It suffices to assume $p=(1-\varepsilon)/r^{1/2}$; in this case, it is well known that $\Delta(X)=pr(1+o(1))$ and $\Delta(Y)=pr(1+o(1))$ with high probability. Consequently, $2\Delta(X)\Delta(Y)=2p^2r^2(1+o(1))\leq 2\left(1-\varepsilon\right)^2(1+o(1))r<2r$ with high probability. 
\end{proof}

\subsection{Connectedness with high probability}

We now turn to the second statement in Theorem~\ref{thm:randombipartite}.  The argument requires a modification of the techniques from Section~\ref{Sec:random-upper}, so we begin by looking at embeddability in bipartite graphs.  Let $G$ and $H$ be bipartite graphs on $m$ vertices with bipartitions $\{A_G, B_G\}$ and $\{A_H,B_H\}$, respectively.  Let $X$ and $Y$ be bipartite graphs on $2r$ vertices with bipartitions $\{A_X, B_X\}$ and $\{A_Y,B_Y\}$, respectively, where $|A_X|=|B_X|=|A_Y|=|B_Y|=r$.  Let $\sigma:V(X) \to V(Y)$ be a bijection.  We say a list $V_1, \ldots, V_{m}$ of pairwise disjoint subsets of $V(Y)$ is \emph{admissible for $\sigma$ with respect to $(G,H)$} if the following conditions hold:
\begin{itemize}
\item The subsets $V_i$ for $i \in A_H$ are all contained in one of the partite sets of $Y$, and the subsets $V_i$ for $i \in B_H$ are all contained in the other partite set of $Y$.
\item The subsets $\sigma^{-1}(V_i)$ for $i \in A_G$ are all contained in one of the partite sets of $X$, and the subsets $\sigma^{-1}(V_i)$ for $i \in B_G$ are all contained in the other partite set of $X$.
\end{itemize}
These ``correlation'' conditions on the $V_i$'s will prevent parity obstructions when we try to find embeddings.

Recall that a pair of graphs $(G,H)$ is said to be embeddable in $(X,Y)$ with respect to the sets $V_1, \ldots, V_m$ and the bijection $\sigma$ if there exist vertices $v_i\in V_i$ for all $i\in[m]$ such that for all $i,j\in [m]$, we have 
\[\{i,j\}\in E(H)\implies\{v_i,v_j\}\in E(Y)\quad\text{and}\quad\{i,j\}\in E(G)\implies\{\sigma^{-1}(v_i),\sigma^{-1}(v_j)\}\in E(X).\]
Now, suppose $q_1, \ldots, q_m$ are integers satisfying $q_1+\cdots+q_m\leq 2r$.  We say that the pair $(G,H)$ is \emph{$(q_1, \ldots, q_m)$-bipartite embeddable in $(X,Y)$} if the pair $(G,H)$ is embeddable in $(X,Y)$ with respect to the sets $V_1, \ldots, V_m$ and the bijection $\sigma$ for every bijection $\sigma$ and every list $V_1, \ldots, V_m$ that is admissible for $\sigma$ with respect to $(G,H)$ and satisfies $|V_i|=q_i$ for all $i \in [m]$.  (Of course, one could extend this definition to the case where the parts of the vertex bipartitions of $X$ and $Y$ do not all have size $r$, but we do not state this asymmetric version because it is more complicated than what we will need.)  

\begin{lemma}\label{lem:bipartite-embeddability}
Let $G$ and $H$ be bipartite graphs on the vertex set $[m]$ with bipartitions $\{A_G, B_G\}$ and $\{A_H,B_H\}$, respectively.  Let $r, q_1, \ldots, q_m$ be positive integers such that $Q:=q_1+\cdots+q_m\leq 2r$. For every set $J\subseteq [m]$, let $\beta(J)=|E(G\vert_J)|+|E(H\vert_J)|$. Choose $0\leq p\leq 1$, and let $X$ and $Y$ be independently-chosen random graphs in $\mathcal G(K_{r,r},p)$. If for every set $J\subseteq [m]$ satisfying $\beta(J)\geq 1$ we have \[p^{\beta(J)}\prod_{j\in J}q_j\geq 3\cdot 2^{m+1}Q\log (2r),\] then the probability that the pair $(G,H)$ is $(q_1,\ldots,q_m)$-bipartite-embeddable in $(X,Y)$ is at least $1-(2r)^{-Q}$.
\end{lemma}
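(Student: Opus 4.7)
The plan is to imitate the proof of Lemma~\ref{Lem:Embeddable} almost verbatim, with $n$ replaced by $2r$ and with the admissibility condition playing the role that made the bipartite setting work. The point is that once we fix a bijection $\sigma$ and an admissible list $V_1,\dots,V_m$, every potential edge of $K_{r,r}$ (resp.\ $X$) that could be required to exist in $Y$ (resp.\ $X$) really is an edge of the underlying complete bipartite graph. Indeed, if $\{i,j\}\in E(H)$, then $i$ and $j$ lie in opposite parts of the bipartition of $H$, so by admissibility $V_i$ and $V_j$ sit in opposite partite sets of $Y$; hence any pair $(v_i,v_j)\in V_i\times V_j$ is a valid potential edge of $K_{r,r}$, which therefore appears in $Y$ independently with probability $p$. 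The analogous statement for edges of $G$, the bijection $\sigma$, and the bipartition of $X$ is the reason for imposing the admissibility condition on the $\sigma^{-1}(V_i)$.

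With this observation in hand, fix an admissible list $V_1,\dots,V_m$ with $|V_i|=q_i$, and fix an injection $\iota\colon\bigcup V_i\to V(X)$ that respects the bipartitions of $X$ and $Y$; there are at most $(2r)^{2Q}$ such choices. Extend $\iota^{-1}$ arbitrarily to a bijection $\sigma\colon V(X)\to V(Y)$ (the extension does not affect the event of interest). For each tuple $t=(v_1,\dots,v_m)\in V_1\times\cdots\times V_m$, let $B_t$ be the event that $t$ witnesses an embedding. By the paragraph above,
\[
\mu:=\sum_{t}\Pr[B_t]=p^{\beta([m])}\prod_{j\in[m]}q_j\geq 3\cdot 2^{m+1}Q\log(2r),
\]
and, writing $\Delta=\sum_{t\sim t'}\Pr[B_t\wedge B_{t'}]=\sum_{J:\beta(J)\geq 1}\Delta_J$ with $t\sim_J t'$ meaning $\{j:v_j=v_j'\}=J$, the identical case analysis as in the proof of Lemma~\ref{Lem:Embeddable} gives
\[
\Delta_J\leq\frac{\mu^2}{p^{\beta(J)}\prod_{j\in J}q_j},
\]
since the only change from the original argument is that the ambient vertex set has size $2r$ instead of $n$, which is irrelevant to the local counting of pairs $(t,t')$.

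Now apply the Janson inequalities exactly as before. If $\Delta\le\mu$, Theorem~8.1.1 of \cite{AlonSpencer} yields
\[
\Pr\!\left[\bigwedge_t\overline{B_t}\right]\leq e^{-\mu/2}\leq (2r)^{-3Q}.
\]
Otherwise, some $J^*$ with $\beta(J^*)\geq 1$ satisfies $\Delta_{J^*}\geq \Delta/2^m$, and the hypothesis gives
\[
\frac{\mu^2}{2\Delta}\geq\frac{1}{2^{m+1}}p^{\beta(J^*)}\prod_{j\in J^*}q_j\geq 3Q\log(2r),
\]
so Theorem~8.1.2 of \cite{AlonSpencer} yields $\Pr[\wedge\overline{B_t}]\leq e^{-\mu^2/(2\Delta)}\leq(2r)^{-3Q}$. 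A union bound over the at most $(2r)^{2Q}$ admissible configurations of $(\sigma,V_1,\dots,V_m)$ then produces a failure probability of at most $(2r)^{-Q}$, giving the claimed bound.

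The only genuinely new ingredient compared to Lemma~\ref{Lem:Embeddable} is the reduction from ``bipartite embeddability with admissible sets'' to the same counting of events $B_t$ that appeared in the non-bipartite case; once admissibility is invoked to conclude that all required edges are potential edges of $K_{r,r}$ (resp.\ of the bipartite complete graph underlying $X$), the Janson-inequality machinery and the two-regime case split carry over with no substantive changes. There is therefore no real obstacle; the main thing to be careful about is the bookkeeping that ensures admissibility is preserved when we enumerate $(\sigma,V_1,\dots,V_m)$, so that each relevant pair contributes probability $p$ rather than $0$.
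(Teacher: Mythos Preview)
Your argument is correct, but it takes a different route from the paper. You re-run the entire Janson-inequality machinery from Lemma~\ref{Lem:Embeddable}, observing that admissibility guarantees every required edge is a genuine potential edge of the underlying $K_{r,r}$, so each such edge appears independently with probability $p$ and the computations of $\mu$ and $\Delta_J$ go through verbatim with $n$ replaced by $2r$. The paper instead gives a three-line coupling argument: view $K_{r,r}$ as a subgraph of $K_{2r}$, generate $\widetilde X,\widetilde Y\in\mathcal G(2r,p)$, and obtain $X,Y$ by deleting non-bipartite edges. Lemma~\ref{Lem:Embeddable} then says $(G,H)$ is $(q_1,\dots,q_m)$-embeddable in $(\widetilde X,\widetilde Y)$ with probability at least $1-(2r)^{-Q}$, and when it is, any \emph{admissible} list and bijection admit an embedding in $(\widetilde X,\widetilde Y)$; admissibility forces every required edge to cross the bipartition, so those edges survive the deletion and witness bipartite embeddability in $(X,Y)$. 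The paper's approach is shorter and uses Lemma~\ref{Lem:Embeddable} as a black box, while yours is self-contained and makes explicit why admissibility is exactly the hypothesis needed to make the Janson computation go through in $\mathcal G(K_{r,r},p)$.
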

\begin{proof}
Let us view $K_{r,r}$ as an edge-subgraph of $K_{2r}$. We can choose the graphs $X$ and $Y$ by first choosing independent graphs $\widetilde X$ and $\widetilde Y$ in $\mathcal G(2r,p)$ and then deleting edges that connect two vertices within the same partite set of $K_{r,r}$. If the pair $(G,H)$ is $(q_1,\ldots,q_m)$-embeddable in $(\widetilde X,\widetilde Y)$, then it is $(q_1,\ldots,q_m)$-bipartite-embeddable in $(X,Y)$. Lemma~\ref{Lem:Embeddable} tells us that this happens with probability at least $1-(2r)^{-Q}$. 
\end{proof}

Recall that we proved the second statement in Theorem~\ref{thm:random} by applying Lemma~\ref{Lem:Embeddable} with two graphs $G^{**}$ and $H^{**}$, each with $m=\left\lfloor(\log n)^{2/3}\right\rfloor$ vertices. In order to use these specific graphs, we repeatedly made use of Wilson's Theorem~\ref{thm:wilson}. Unfortunately, the known bipartite analogue of Theorem~\ref{thm:wilson} (see~\cite{wilson}) is not sufficiently robust, and we do not know of a suitable substitute, so we will not apply Lemma~\ref{lem:bipartite-embeddability} with $m\to\infty$. Rather, we will apply it with four particular pairs of graphs, each with $m=8$; this is why the two bounds in Theorem~\ref{thm:randombipartite} differ by a multiplicative factor of $r^{1/5+o(1)}$ while the bounds in Theorem~\ref{thm:random} differ by a multiplicative factor of $n^{o(1)}$.

\begin{figure}[ht]
\begin{center}
\includegraphics[height=11.3cm]{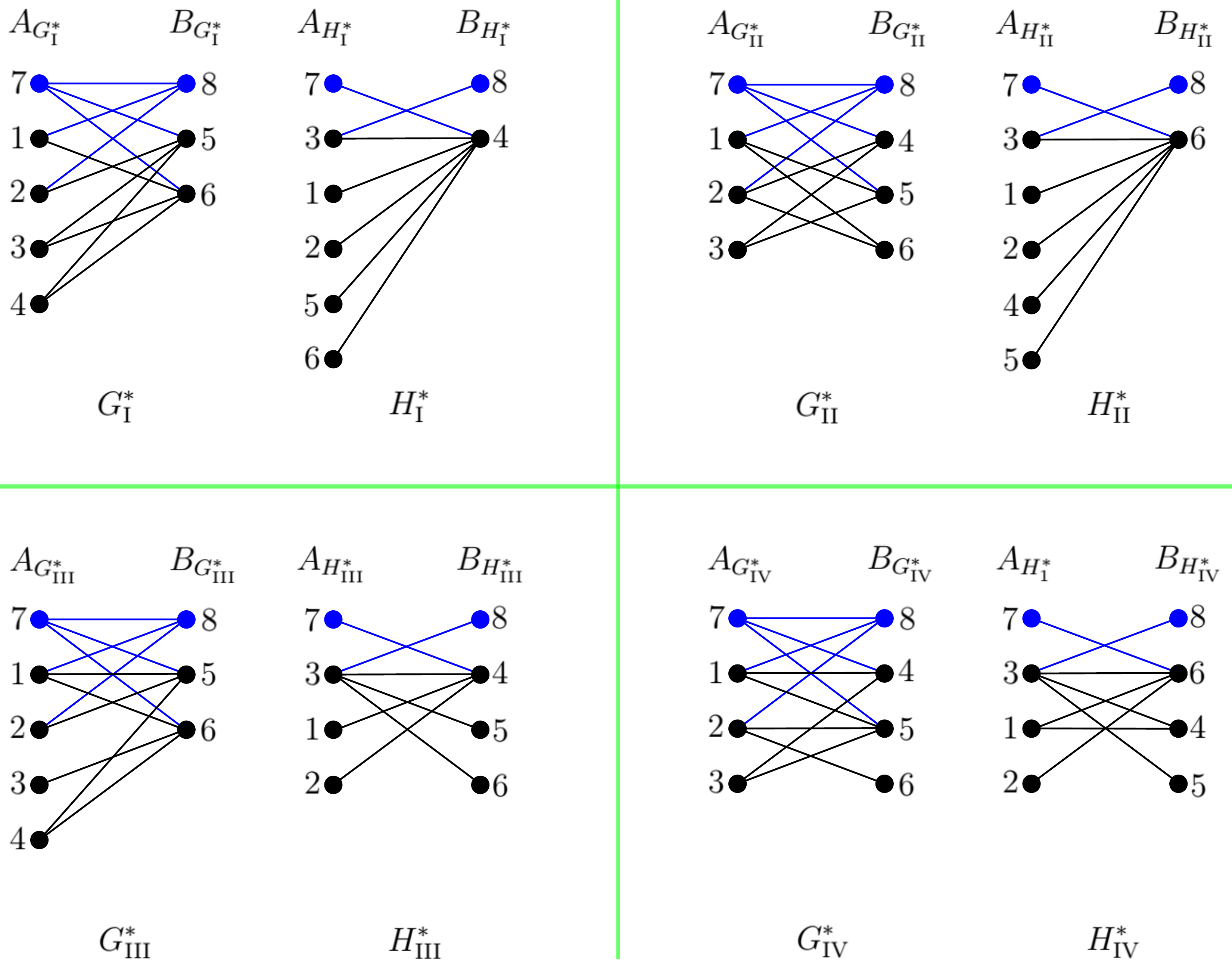}
\caption{Diagrams showing the graphs $G_{\text{R}}^*$ and $H_{\text{R}}^*$ for $\text{R}\in\{\text{I},\text{II},\text{III},\text{IV}\}$. The graphs $G_{\text{R}}^{**}$ and $H_{\text{R}}^{**}$ are obtained from $G_{\text{R}}^*$ and $H_{\text{R}}^*$, respectively, by removing the blue vertices and edges.}
\label{Fig4}
\end{center}  
\end{figure}

We now introduce the four pairs of graphs that we will need.  Figure~\ref{Fig4} shows the pairs of graphs $(G^*_{\text{I}}, H^*_{\text{I}})$, $(G^*_{\text{II}}, H^*_{\text{II}})$, $(G^*_{\text{III}}, H^*_{\text{III}})$, and $(G^*_{\text{IV}}, H^*_{\text{IV}})$, where each graph has the vertex set $\{1,\ldots,8\}$. For each $\text{R} \in \{\text{I}, \text{II}, \text{III}, \text{IV}\}$, let $(G^{**}_{\text{R}}, H^{**}_{\text{R}})$ be obtained from $(G^{*}_{\text{R}}, H^{*}_{\text{R}})$ by deleting the vertices $7$ and $8$ from each graph.  Thus, $G^{**}_{\text{R}}$ and $H^{**}_{\text{R}}$ are graphs on the vertex set $\{1,\ldots,6\}$.  
\begin{lemma}\label{lem:bipartite-exchangeable}
For every $\operatorname{R} \in \{\operatorname{I}, \operatorname{II}, \operatorname{III}, \operatorname{IV}\}$, the vertices $7$ and $8$ are $(G^*_{\operatorname{R}}, H^*_{\operatorname{R}})$-exchangeable from the identity bijection $\Id:\{1,\ldots,8\}\to\{1,\ldots,8\}$.
\end{lemma}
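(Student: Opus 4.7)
The plan is direct verification for each of the four cases $\text{R} \in \{\text{I}, \text{II}, \text{III}, \text{IV}\}$. Because each of the graphs $G^*_{\text{R}}$ and $H^*_{\text{R}}$ is bipartite, Wilson's Theorem~\ref{thm:wilson} is unavailable, so I cannot mimic the proof of Lemma~\ref{Lem2}. Instead, since each of these eight-vertex graphs is small, I intend to exhibit, for each $\text{R}$, an explicit sequence of $(G^*_{\text{R}}, H^*_{\text{R}})$-friendly swaps that, when applied to $\Id$, has the net effect of the transposition $(7\,\,8)$.

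Before searching for such sequences, I would first confirm consistency with Proposition~\ref{prop:bipartite}: for each $\text{R}$, the bipartitions of $G^*_{\text{R}}$ and $H^*_{\text{R}}$ must be arranged so that $\Id$ and $\Id\circ(7\,\,8)$ are concordant. This is presumably why four distinct pairs appear in Figure~\ref{Fig4}; the four pairs are meant to cover the four parity configurations that arise when applying Lemma~\ref{lem:bipartite-embeddability} in the proof of Theorem~\ref{thm:randombipartite}, ensuring that in every parity situation at least one exchanging template can be embedded.

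For each case $\text{R}$, the construction of the sequence proceeds by alternating two types of moves. Moves of the first type use only edges of $G^{**}_{\text{R}}$ and $H^{**}_{\text{R}}$ and thus permute the tokens on vertices $1,\ldots,6$ while fixing the tokens $7$ and $8$. Moves of the second type use the blue edges of $G^*_{\text{R}}$ incident to $7$ or $8$ and shift those two tokens along a short cycle. The strategy is to first apply internal moves to pre-arrange the tokens on $1,\ldots,6$ so that the tokens currently occupying the endpoints of the blue path are precisely the labels adjacent to $7$ and $8$ in $H^*_{\text{R}}$; then perform the cyclic rotation that advances tokens $7$ and $8$ around the blue cycle; and finally apply internal moves again to restore the tokens $1,\ldots,6$ to identity positions.

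The main obstacle is that the bipartite parity constraint from Proposition~\ref{prop:bipartite} also governs which permutations of $\{1,\ldots,6\}$ are reachable by the internal moves, so the pre- and post-arrangements must be chosen compatibly; this is what forces the specific combinatorial features visible in Figure~\ref{Fig4} (the placement of the blue edges relative to the bipartitions and to the cycle structure of $G^{**}_{\text{R}}$). Once the graphs have been designed as depicted, the existence of a valid exchanging sequence in each of the four cases is a finite verification that can be carried out by hand or, more expediently, by a direct computer search through the connected component of $\Id$ in $\FS(G^*_{\text{R}}, H^*_{\text{R}})$.
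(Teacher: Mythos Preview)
Your proposal is correct and takes essentially the same approach as the paper: the paper's proof simply exhibits, for each $\text{R}$, an explicit sequence of $(G^*_{\text{R}},H^*_{\text{R}})$-friendly swaps (found with computer assistance) that exchanges $7$ and $8$ from $\Id$. Your surrounding discussion of parity constraints and a hand strategy is reasonable commentary, but ultimately the proof in the paper is exactly the finite verification/computer search you describe; to complete your write-up you would just need to record the four resulting sequences.
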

\begin{proof}
In each case, we will simply state the sequence of $(G^*_{\operatorname{R}}, H^*_{\operatorname{R}})$-friendly swaps (which we found using computer assistance) that can be applied to the identity bijection in order to exchange $7$ and $8$. For $(G^*_{\text{I}}, H^*_{\text{I}})$, the sequence of swaps is
\[46, 34, 45, 47, 34, 38, 46, 47, 24, 34, 14, 46, 45, 24, 34, 14, 46,\] \[45, 24, 34, 47, 24, 45, 46, 14, 47, 24, 45, 46, 14, 47, 24, 45.\]
For $(G^*_{\text{II}}, H^*_{\text{II}})$, the sequence of swaps is
\[16, 56, 67, 46, 36, 67, 46, 36, 26, 16, 56, 38, 36, 16, 26, 67, 46, 36, 56, 26, 67, 46, 36, 56, 16, 67, 26.\]
For $(G^*_{\text{III}}, H^*_{\text{III}})$, the sequence of swaps is
\[36, 34, 47, 35, 34, 38, 14, 34, 24, 14, 34, 35, 47, 34, 38, 35, 34, 14, 24, 47, 14, 34, 35, 38, 36.\]
Finally, for $(G^*_{\text{IV}}, H^*_{\text{IV}})$, the sequence of swaps is
\[26, 35, 14, 34, 38, 36, 34, 67, 38, 16, 36, 35, 38, 34, 14, 16, 36, 38, 26, 35, 34, 38, 35. \qedhere\]
\end{proof}

\begin{lemma}\label{lem:bipartite-p-optimization}
Let $r$ be a large positive integer, and let the pairs $(G^{**}_{\operatorname{R}}, H^{**}_{\operatorname{R}})$ be as above.  Let $p=p(r)$ be a probability that satisfies
$$p\geq \frac{5 (\log r)^{1/10}}{r^{3/10}}.$$
Let $X$ and $Y$ be independently-chosen random graphs in $\mathcal{G}(K_{r,r},p)$.  Let $q=\left\lfloor pr/13\right\rfloor$. If $r$ is sufficiently large, then the probability that every pair $(G^{**}_{\operatorname{R}}, H^{**}_{\operatorname{R}})$ (for $\operatorname{R} \in \{\operatorname{I}, \operatorname{II}, \operatorname{III}, \operatorname{IV}\}$) is $(q,q,q,q,q,q)$-bipartite-embeddable in $(X,Y)$ is at least $1-4(2r)^{-6q}$.
\end{lemma}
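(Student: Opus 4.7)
The plan is to apply Lemma~\ref{lem:bipartite-embeddability} separately to each of the four pairs $(G^{**}_{\operatorname{R}}, H^{**}_{\operatorname{R}})$ with $m=6$, $q_1=\cdots=q_6=q$, and $Q=6q$, and then take a union bound over $\operatorname{R}\in\{\operatorname{I},\operatorname{II},\operatorname{III},\operatorname{IV}\}$. For a fixed $\operatorname{R}$, the hypothesis of Lemma~\ref{lem:bipartite-embeddability} reads
\[p^{\beta_{\operatorname{R}}(J)}\,q^{|J|}\;\geq\;3\cdot 2^{7}\cdot 6q\log(2r)\qquad\text{for every }J\subseteq[6]\text{ with }\beta_{\operatorname{R}}(J)\geq 1,\]
where $\beta_{\operatorname{R}}(J)=|E(G^{**}_{\operatorname{R}}\vert_J)|+|E(H^{**}_{\operatorname{R}}\vert_J)|$. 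Once this is checked for each $\operatorname{R}$, the lemma gives $(q,q,q,q,q,q)$-bipartite-embeddability with failure probability at most $(2r)^{-6q}$, and a union bound over four pairs produces the desired $4(2r)^{-6q}$.

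Canceling a factor of $q$ and substituting $q\approx pr/13$, the hypothesis reduces to
\[p^{\beta_{\operatorname{R}}(J)+|J|-1}\,r^{|J|-1}\;\geq\;C\log r\]
for an absolute constant $C$. Plugging in the lower bound $p\geq 5(\log r)^{1/10}/r^{3/10}$, the exponent of $r$ becomes $\bigl(7(|J|-1)-3\beta_{\operatorname{R}}(J)\bigr)/10$, and the $(\log r)$-part contributes $(5(\log r)^{1/10})^{\beta_{\operatorname{R}}(J)+|J|-1}$. Thus the hypothesis is implied, for $r$ large, by the purely combinatorial density bound
\[7(|J|-1)\;\geq\;3\,\beta_{\operatorname{R}}(J)\qquad\text{for all }J\subseteq[6]\text{ with }\beta_{\operatorname{R}}(J)\geq 1,\]
the worst cases being the ``boundary'' configurations with $|J|\in\{2,3,4,5,6\}$ requiring $\beta_{\operatorname{R}}(J)\leq 2,4,7,9,11$ respectively. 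In the borderline case where the exponent of $r$ actually equals $0$ (e.g. $|J|=4$, $\beta_{\operatorname{R}}(J)=7$), one still has a genuine power of $\log r$ on the left-hand side because $\beta_{\operatorname{R}}(J)+|J|-1\geq 10$ in that case, and the constant $5$ in the hypothesis on $p$ is chosen so that the resulting inequality is satisfied with room to spare.

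The remaining work is a finite, completely mechanical verification on the four pairs pictured in Figure~\ref{Fig4}: for each $\operatorname{R}$ and each of the $2^6-1$ nonempty subsets $J\subseteq[6]$, record $\beta_{\operatorname{R}}(J)$ and confirm that $3\beta_{\operatorname{R}}(J)\leq 7(|J|-1)$. The pairs in Figure~\ref{Fig4} were built with exactly this density constraint in mind, so the check goes through; indeed, the exponent $3/10$ in the hypothesis on $p$ is calibrated precisely to the maximum value of $\beta_{\operatorname{R}}(J)/(|J|-1)$ that occurs in these four graphs. The main obstacle is organizing this case analysis so as to be sure the worst-case $J$ is correctly identified in each pair, but once the density bound is established, the rest of the argument is a straightforward invocation of Lemma~\ref{lem:bipartite-embeddability} and a union bound.
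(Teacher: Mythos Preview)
Your proposal is correct and follows essentially the same approach as the paper: apply Lemma~\ref{lem:bipartite-embeddability} with $m=6$ and $q_1=\cdots=q_6=q$ to each of the four pairs, verify the hypothesis for every $J\subseteq[6]$ with $\beta_{\operatorname{R}}(J)\geq 1$, and take a union bound over the four choices of $\operatorname{R}$. The paper leaves the verification as a direct ``straightforward (yet somewhat tedious)'' check over the at most $64$ subsets $J$, whereas you go slightly further by reducing it to the clean density inequality $7(|J|-1)\geq 3\beta_{\operatorname{R}}(J)$ and explaining how the constant $5$ handles the borderline cases; but the substance is the same.
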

\begin{proof}
Fix $\text{R}\in\{\text{I},\text{II},\text{III},\text{IV}\}$. Preserve the notation from Lemma~\ref{lem:bipartite-embeddability} with $m=6$, with $q_1=\cdots=q_6=q$, and with $(G^{**}_{\text{R}}, H^{**}_{\text{R}})$ playing the role of $(G,H)$. Note that $Q=6q$. It is 
straightforward (yet somewhat tedious) to verify that if $r$ is sufficiently large, then \[p^{\beta(J)}\prod_{j\in J}q_j\geq 3\cdot2^{m+1}Q\log(2r)\] for every set $J\subseteq\{1,\ldots,6\}$ satisfying $\beta(J)\geq 1$. Indeed, verifying this inequality for a fixed $J$ is easy, and there are at most $64$ possible choices of $J$. By 
Lemma~\ref{lem:bipartite-embeddability}, the probability that $(G^{**}_{\text{R}}, H^{**}_{\text{R}})$ is $(q,q,q,q,q,q)$-bipartite-embeddable in $(X,Y)$ is at least $1-(2r)^{-6q}$. The desired result now follows from taking a union bound over the $4$ choices for $\text{R}$. 
\end{proof}

We are now in a position to prove the second statement of Theorem~\ref{thm:randombipartite}.

\begin{proposition}\label{prop:bipartite-random}
Let $X$ and $Y$ be independently-chosen random graphs in $\mathcal{G}(K_{r,r},p)$, where
\[p\geq \frac{5 (\log r)^{1/10}}{r^{3/10}}.\] 
The following holds with high probability: For every edge-supergraph $\widetilde X$ of $X$, every bijection $\sigma:V(X)\to V(Y)$, and all vertices $u$ and $v$ in different partite sets of $Y$ such that $\{\sigma^{-1}(u),\sigma^{-1}(v)\}\in E(\widetilde X)$, we have that the vertices $u$ and $v$ are $(\widetilde X,Y)$-exchangeable from $\sigma$.
\end{proposition}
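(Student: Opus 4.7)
The plan is to mimic the strategy of Section~\ref{Sec:random-upper}, using Lemmas~\ref{lem:bipartite-p-optimization} and~\ref{lem:bipartite-exchangeable} in the roles played there by Lemmas~\ref{Lem1} and~\ref{Lem2}. First, I would condition on two high-probability events: by Lemma~\ref{lem:bipartite-p-optimization} together with a union bound, each of the four pairs $(G^{**}_{\text{R}}, H^{**}_{\text{R}})$ is $(q, q, q, q, q, q)$-bipartite-embeddable in $(X, Y)$ for $q = \lfloor pr/13 \rfloor$; and, since $pr \gg \log r$, a standard Chernoff bound gives that every vertex of $X$ and $Y$ has degree at least $pr/2$.

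Given data $(\widetilde X, \sigma, u, v)$ as in the hypothesis, I would next read off the \emph{parity profile} of the configuration: which partite set of $X$ contains $\sigma^{-1}(u)$ and which contains $\sigma^{-1}(v)$ (while $u$ and $v$ already lie in opposite partite sets of $Y$ by assumption). Based on this profile I would select the pair $\text{R} \in \{\text{I}, \text{II}, \text{III}, \text{IV}\}$ whose bipartite structure is compatible with the intended assignments $7 \mapsto u$, $8 \mapsto v$ in $H^*_{\text{R}}$ and $7 \mapsto \sigma^{-1}(u)$, $8 \mapsto \sigma^{-1}(v)$ in $G^*_{\text{R}}$. I would then pick pairwise disjoint subsets $V_1, \ldots, V_6 \subseteq V(Y) \setminus \{u, v\}$, each of size $q$, subject to the constraints $V_i \subseteq N_Y(u)$ whenever $\{i, 7\} \in E(H^*_{\text{R}})$, $V_i \subseteq N_Y(v)$ whenever $\{i, 8\} \in E(H^*_{\text{R}})$, $\sigma^{-1}(V_i) \subseteq N_X(\sigma^{-1}(u))$ whenever $\{i, 7\} \in E(G^*_{\text{R}})$, and $\sigma^{-1}(V_i) \subseteq N_X(\sigma^{-1}(v))$ whenever $\{i, 8\} \in E(G^*_{\text{R}})$. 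Since each relevant neighborhood has size at least $pr/2 > 6q$, there is ample room to place six disjoint sets of size $q$, and the choice of $\text{R}$ makes these neighborhood demands consistent with the admissibility of the list with respect to $(G^{**}_{\text{R}}, H^{**}_{\text{R}})$.

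Applying the bipartite embeddability conclusion yields vertices $v_i \in V_i$ such that $i \mapsto v_i$ embeds $H^{**}_{\text{R}}$ into $Y$ and $i \mapsto \sigma^{-1}(v_i)$ embeds $G^{**}_{\text{R}}$ into $X$. Extending by $\psi(7) = u$, $\psi(8) = v$ and defining $\varphi = \sigma^{-1} \circ \psi$ produces graph embeddings $\psi: H^*_{\text{R}} \hookrightarrow Y$ and $\varphi: G^*_{\text{R}} \hookrightarrow \widetilde X$; the neighborhood constraints take care of all edges incident to $7$ or $8$ except $\{7, 8\}$ itself, which maps to $\{\sigma^{-1}(u), \sigma^{-1}(v)\} \in E(\widetilde X)$ by hypothesis. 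Lemma~\ref{lem:bipartite-exchangeable} then furnishes a sequence of $(G^*_{\text{R}}, H^*_{\text{R}})$-friendly swaps exchanging $7$ and $8$ from the identity bijection, and this sequence transfers through $(\varphi, \psi)$ to a sequence of $(\widetilde X, Y)$-friendly swaps exchanging $u$ and $v$ from $\sigma$. The main obstacle is the parity bookkeeping: one has to verify that the four pairs in Figure~\ref{Fig4} were designed so that for every possible parity profile there is a choice of $\text{R}$ making the neighborhood conditions on the $V_i$'s simultaneously compatible with the bipartite admissibility requirements. This amounts to a short finite case analysis, after which the rest of the argument follows the same mechanical pattern as in Section~\ref{Sec:random-upper}.
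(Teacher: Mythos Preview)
Your overall architecture matches the paper's: condition on degree concentration and on the $(q,q,q,q,q,q)$-bipartite-embeddability of the four pairs, choose the appropriate $\text{R}$, build an admissible list $V_1,\dots,V_6$ meeting the neighborhood constraints coming from the edges incident to $7$ and $8$, embed, and transfer the swap sequence from Lemma~\ref{lem:bipartite-exchangeable}. The gap is in what you call the ``parity profile.'' Knowing only which partite sets of $X$ contain $u':=\sigma^{-1}(u)$ and $v':=\sigma^{-1}(v)$ is not enough information to select $\text{R}$. If, say, $\{i,7\}\in E(G^*_{\text{R}})$, then the constraint $\sigma^{-1}(V_i)\subseteq N_X(u')$ already pins $\sigma^{-1}(V_i)$ to the partite set of $X$ opposite $u'$, while admissibility for $H^{**}_{\text{R}}$ simultaneously pins $V_i$ to a specific partite set of $Y$. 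Meeting both with $|V_i|=q$ therefore requires something like $|\sigma(N_X(u'))\cap A_Y|\ge q$, and this can simply fail: $\sigma$ might send nearly all of $N_X(u')$ into $B_Y$. Your sentence ``each relevant neighborhood has size at least $pr/2>6q$, so there is ample room'' overlooks exactly this intersection with a partite set; the bare neighborhood is not what must be large.

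The paper resolves this by recording a finer profile: whether $\sigma$ \emph{majority-maps} $N_X(u')$ and $N_X(v')$ into the same or into different partite sets of $Y$, and symmetrically whether $\sigma^{-1}$ majority-maps $N_Y(u)$ and $N_Y(v)$ into the same or into different partite sets of $X$. After the harmless normalizations $u'\in A_X$, $v'\in B_X$, $u\in A_Y$, $v\in B_Y$ (and the further trick of replacing $\sigma$ by $\sigma\circ(u'\,\,v')$ when convenient), this still yields four genuinely distinct cases, and the four pairs in Figure~\ref{Fig4} are tailored to those four cases. In each case the relevant neighborhood--partite-set intersections are then guaranteed to have size at least about $pr/4>2q$, which is what the construction of the $V_i$'s actually needs. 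Once you replace your coarse profile by this majority-mapping profile, the remainder of your argument goes through as written.
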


Before we prove this proposition, let us see how it implies the second statement in Theorem~\ref{thm:randombipartite}. Choose $X$ and $Y$ independently from $\mathcal G(K_{r,r},p)$, where $p\geq 5 (\log r)^{1/10}/r^{3/10}$.  Proposition~\ref{prop:bipartite-random} is clearly symmetric in $X$ and $Y$, so we can apply it (with the roles of $X$ and $Y$ switched and $\widetilde X=Y$) in conjunction with Proposition~\ref{lem:exchangeable} to see that, with high probability, $\FS(Y,X)$ has the same number of connected components as $\FS(Y,K_{r,r})$. On the other hand, we can also apply Proposition~\ref{prop:bipartite-random} directly (with $\widetilde X=K_{r,r}$) in conjunction with Proposition~\ref{lem:exchangeable} to see that, with high probability, $\FS(K_{r,r},Y)$ has the same number of connected components as $\FS(K_{r,r},K_{r,r})$. Since $\FS(X,Y)\cong\FS(Y,X)$ and  $\FS(Y,K_{r,r})\cong\FS(K_{r,r},Y)$, it follows that $\FS(X,Y)$ has the same number of connected components as $\FS(K_{r,r},K_{r,r})$ with high probability. This number of connected components is $2$ by Proposition~\ref{prop:complete-bipartite}.  

\begin{proof}[Proof of Proposition~\ref{prop:bipartite-random}]
As usual, let $X$ and $Y$ have vertex bipartitions $\{A_X,B_X\}$ and $\{A_Y,B_Y\}$, respectively. Let $q=\left\lfloor pr/13\right\rfloor$. With high probability, all of the vertices in $X$ and $Y$ have degrees $pr(1+o(1))$; henceforth, we will assume that this is the case. It follows from Lemma~\ref{lem:bipartite-p-optimization} that, with high probability, every pair $(G_{\text{R}}^{**},H_{\text{R}}^{**})$ for $\text R\in\{\text I,\text{II},\text{III},\text{IV}\}$ is $(q,q,q,q,q,q)$-bipartite-embeddable in $(X,Y)$; henceforth, we will assume that this is the case as well.  Suppose $X$ is an edge-subgraph of a graph $\widetilde X$. Fix a bijection $\sigma:V(X)\to V(Y)$ and vertices $u,v$ in different partite sets of $Y$ such that $\{\sigma^{-1}(u),\sigma^{-1}(v)\}\in E(\widetilde X)$. For convenience, let $u'=\sigma^{-1}(u)$ and $v'=\sigma^{-1}(v)$. Without loss of generality, suppose that $u' \in A_X$, $v' \in B_X$, $u \in A_Y$, and $v \in B_Y$.  We now distinguish four cases, depending on how the neighborhoods of $u'$ and $v'$ (respectively, $u$ and $v$) correlate with the bipartition $\{A_Y, B_Y\}$ (respectively, $\{A_X,B_X\}$) after an application of $\sigma$ (respectively, $\sigma^{-1}$).  In what follows, the neighborhood of a vertex in $V(X)$ is taken with respect to $X$ (not $\widetilde X$). Say that $\sigma$ \emph{majority-maps} $N(u')$ (respectively, $N(v')$) into $A_Y$ if $A_Y$ contains at least half of the elements of $\sigma(N(u'))$ (respectively, $N(v')$), and say that $\sigma$ majority-maps $N(u')$ (respectively, $N(v')$) into $B_Y$ otherwise. We give a similar definition of $\sigma^{-1}$ majority-mapping $N(u)$ or $N(v)$ into $A_X$ or $B_X$. Our four cases are the following:
\begin{enumerate}[I.]
\item $\sigma$ majority-maps $N(u')$ and $N(v')$ into the same part of the bipartition of $Y$, and $\sigma^{-1}$ majority-maps $N(u)$ and $N(v)$ into the same part of the bipartition of $X$.
\item $\sigma$ majority-maps $N(u')$ and $N(v')$ into the same part of the bipartition of $Y$, and $\sigma^{-1}$ majority-maps $N(u)$ and $N(v)$ into different parts of the bipartition of $X$.
\item $\sigma$ majority-maps $N(u')$ and $N(v')$ into different parts of the bipartition of $Y$, and $\sigma^{-1}$ majority-maps $N(u)$ and $N(v)$ into the same part of the bipartition of $X$.
\item $\sigma$ majority-maps $N(u')$ and $N(v')$ into different parts of the bipartition of $Y$, and $\sigma^{-1}$ majority-maps $N(u)$ and $N(v)$ into different parts of the bipartition of $X$.
\end{enumerate}

We will give a detailed explanation of the proof in Case I, but we only sketch the other three cases because they are very similar. 

\noindent \textbf{Case I:} Without loss of generality, we may assume that $\sigma$ majority-maps both $N(u')$ and $N(v')$ into $A_Y$ (otherwise, we can switch the roles of $u$ and $v$ and switch the roles of $u'$ and $v'$).  Note that $u$ and $v$ are $(\widetilde X,Y)$-exchangeable from $\sigma$ if and only they are $(\widetilde X,Y)$-exchangeable from $\sigma \circ (u'\,\,v')$; thus, by possibly replacing $\sigma$ with $\sigma \circ (u'\,\,v')$ and switching the names of $A_X$ and $B_X$, we may also assume that $\sigma^{-1}$ majority-maps both $N(u)$ and $N(v)$ into $A_X$.  Each of the sets $N(u')$, $N(v')$, $N(u)$, $N(v)$ has size $pr(1+o(1))$, so we can find pairwise disjoint sets $V_1, \ldots, V_6$ in $V(Y)\setminus\{u,v\}$, each of size $q=\left\lfloor pr/13\right\rfloor$, such that the following hold:
\begin{itemize}
\item $\sigma^{-1}(V_5), \sigma^{-1}(V_6) \subseteq N(u')$ (which implies that $\sigma^{-1}(V_5), \sigma^{-1}(V_6) \subseteq B_X$) and $V_5, V_6 \subseteq A_Y$.
\item $\sigma^{-1}(V_1), \sigma^{-1}(V_2) \subseteq N(v')$ (which implies that $\sigma^{-1}(V_1), \sigma^{-1}(V_2) \subseteq A_X$) and $V_1, V_2 \subseteq A_Y$.
\item $V_4 \subseteq N(u)$ (which implies that $V_4 \subseteq B_Y$) and $\sigma^{-1}(V_4) \subseteq A_X$.
\item $V_3 \subseteq N(v)$ (which implies that $V_3 \subseteq A_Y$) and $\sigma^{-1}(V_3) \subseteq A_X$.
\end{itemize}
Note that the list $V_1, \ldots, V_6$ is admissible for $\sigma$ with respect to $(G_{\text{I}}^{**},H_{\text{I}}^{**})$. Since the pair $(G_{\text{I}}^{**},H_{\text{I}}^{**})$ is $(q,q,q,q,q,q)$-bipartite-embeddable in $(X,Y)$, it must be embeddable in $(X,Y)$ with respect to the sets $V_1\ldots,V_6$ and the bijection $\sigma$. This means that there exist vertices $v_i \in V_i$ for all $i\in\{1,\ldots,6\}$ such that for all $i,j\in [m]$, we have 
\[\{i,j\}\in E(H_{\text I}^{**})\implies\{v_i,v_j\}\in E(Y)\quad\text{and}\quad\{i,j\}\in E(G_{\text I}^{**})\implies\{\sigma^{-1}(v_i),\sigma^{-1}(v_j)\}\in E(X).\] Let $v_i'=\sigma^{-1}(v_i)$ for all $i\in\{1,\ldots,6\}$. We have \[\{u', v'_5\}, \{u', v'_6\}, \{v', v'_1\}, \{v', v'_2\} \in E(X) \quad \text{and} \quad \{u, v_4\}, \{v, v_3\} \in E(Y)\] by construction.  

Define a map $\psi:V(H_{\text I}^*)\to V(Y)$ by $\psi(7)=u$, $\psi(8)=v$, and $\psi(i)=v_i$ for all $i\in\{1,\ldots,6\}$. Define $\varphi:V(G_{\text I}^*)\to V(\widetilde X)$ by $\varphi=\varphi\circ\Id=\sigma^{-1}\circ\psi$. Note that $\psi$ and $\varphi$ are injective. By the discussion in the previous paragraph, $\psi$ is a graph embedding of $H_{\text I}^*$ into $Y$ that sends $7$ to $u$ and sends $8$ to $v$. Similarly, $\varphi$ is a graph embedding of $G_\text{I}^*$ into $\widetilde X$ that sends $7$ to $u'$ and sends $8$ to $v'$.  Lemma~\ref{lem:bipartite-exchangeable} tells us that there is a sequence $\Sigma_{\text I}$ of $(G_{\text I}^*,H_{\text I}^*)$-friendly swaps that we can apply to the identity bijection $\Id:\{1,\ldots,8\}\to\{1,\ldots,8\}$ in order to exchange $7$ and $8$; using the graph embeddings $\psi$ and $\varphi=\sigma^{-1}\circ\psi$, we can transfer $\Sigma_{\text I}$ to a sequence of $(\widetilde X,Y)$-friendly swaps that we can apply to $\sigma$ in order to exchange $u$ and $v$.

\noindent \textbf{Case II:} As in Case I, we may assume without loss of generality that $\sigma$ majority-maps both $N(u')$ and $N(v')$ into $A_Y$. By possibly replacing $\sigma$ with $\sigma \circ (u'\,\,v')$ and switching the names of $A_X$ and $B_X$, we may also assume that $\sigma^{-1}$ majority-maps $N(u)$ into $B_X$ and majority-maps $N(v)$ into $A_X$.  We can produce pairwise disjoint sets $V_1, \ldots, V_6$ in $V(Y)\setminus\{u,v\}$, each of size $q$, such that the following hold:
\begin{itemize}
\item $\sigma^{-1}(V_4), \sigma^{-1}(V_5) \subseteq N(u')$ and $V_4, V_5 \subseteq A_Y$.
\item $\sigma^{-1}(V_1), \sigma^{-1}(V_2) \subseteq N(v')$ and $V_1, V_2 \subseteq A_Y$.
\item $V_3 \subseteq N(u)$ and $\sigma^{-1}(V_3) \subseteq A_X$.
\item $V_6 \subseteq N(v)$ and $\sigma^{-1}(V_3) \subseteq B_X$.
\end{itemize}
The list $V_1, \ldots, V_6$ is admissible for $\sigma$ with respect to $(G_{\text{II}}^{**},H_{\text{II}}^{**})$. The same argument as in Case~I, except with $G_{\text{I}}^*,H_{\text{I}}^*,G_{\text{I}}^{**},H_{\text{I}}^{**}$ replaced by $G_{\text{II}}^*,H_{\text{II}}^*,G_{\text{II}}^{**},H_{\text{II}}^{**}$, shows that $u$ and $v$ are $(\widetilde X,Y)$-exchangeable from $\sigma$.

\noindent \textbf{Cases III and IV:} We omit the details of these cases, which are entirely analogous to Cases~I and II.  In Case R (for $\text{R}\in\{\text{III},\text{IV}\}$), we find the necessary pairwise disjoint sets $V_1, \ldots, V_6$ in $V(Y)\setminus\{u,v\}$, each of size $q$, such that the list $V_1,\ldots,V_6$ is admissible for $\sigma$ with respect to $(G_{\text{R}}^{**},H_{\text{R}}^{**})$. Repeating the same argument as before shows that $u$ and $v$ are $(\widetilde X,Y)$-exchangeable from $\sigma$.
\end{proof}

\section{Graphs with Large Minimum Degree}\label{Sec:MinDegree}

The purpose of this section is to prove Theorem~\ref{Thm:bipartitemindegree}, which gives bounds for $d_n$. Recall that this is the smallest nonnegative integer such that any two $n$-vertex graphs $X$ and $Y$ with minimum degrees at least $d_n$ must have a connected friends-and-strangers graph $\FS(X,Y)$. 

\subsection{Lower bound}

\begin{proposition}
We have $d_n\geq \dfrac{3}{5}n-2$. 
\end{proposition}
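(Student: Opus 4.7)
To establish the lower bound $d_n \geq \frac{3}{5}n - 2$, I will exhibit, for every sufficiently large $n$, a pair of $n$-vertex graphs $X$ and $Y$ with
\[
\delta(X),\delta(Y) \;\geq\; \left\lceil\tfrac{3n}{5}\right\rceil-3
\]
such that $\FS(X,Y)$ is disconnected; this immediately gives $d_n \geq \lceil 3n/5\rceil - 2 \geq 3n/5 - 2$.

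The construction I have in mind has a ``two-clique'' flavor. I plan to partition $V(X) = A \cup C \cup B$ with $|A|=|B|$ slightly greater than $\tfrac{2n}{5}$ and $|C|$ slightly less than $\tfrac{n}{5}$, and build $X$ so that $A\cup C$ and $B\cup C$ each induce cliques while no edge connects $A$ to $B$.  Every vertex of $A\cup B$ then has degree $|A|-1+|C|$, which can be arranged to be at least $\lceil 3n/5\rceil - 3$ by the right choice of parameters; vertices of $C$ have degree $n-1$.  I will take $Y$ isomorphic to $X$ with its own partition $A'\cup C'\cup B'$.

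To prove that $\FS(X,Y)$ is disconnected, I plan to write down an invariant quantity on bijections $\sigma : V(X)\to V(Y)$ and display two bijections on which it differs.  The natural first candidate is the ``crossing count''
\[
Q(\sigma) \;=\; \bigl|\{v\in B:\sigma(v)\in A'\}\bigr|,
\]
and the strategy is a case-by-case analysis of how each $(X,Y)$-friendly swap changes $Q$, organized by the partition classes in $X$ containing the two positions and the partition classes in $Y$ containing the two labels.  Since no edges run between $A$ and $B$ in $X$ (and likewise between $A'$ and $B'$ in $Y$), any motion of an $A'$-label from an $A$-position to a $B$-position must pass through the bottleneck $C$, and the restrictions on which labels may sit adjacent to the bottleneck should force $Q$ to be preserved.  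Once the invariant is established, comparing the identity bijection (with $Q=0$) to a bijection obtained by a single $A\leftrightarrow B'$ transposition (with $Q=1$) completes the proof.

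The main difficulty, and the place where the precise shape of the construction is determined, is that in the most direct incarnation of the construction a short three-step maneuver using one helper vertex in $C$ and one helper vertex in $C'$ can in fact raise $Q$ from $0$ to $1$, so the plain $Q$ as stated is not invariant.  I expect to overcome this either by strategically thinning $C$ or $C'$ (removing a few carefully chosen edges so that this maneuver becomes unavailable, while preserving $\delta(X), \delta(Y) \geq \lceil 3n/5\rceil - 3$) or by replacing $Q$ with a more refined $\mathbb{Z}/2$-valued invariant that combines $Q$ with the sign of $\sigma^{-1}$, in analogy with the parity invariant of Proposition~\ref{prop:bipartite} in the bipartite setting.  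Locating the correct balance between these modifications --- keeping enough edges to maintain the minimum-degree requirement while blocking every swap sequence that would change the invariant --- is the heart of the argument, and is what forces the additive constant $-2$ in the stated bound.
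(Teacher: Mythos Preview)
Your proposal is not a proof: you identify a construction, discover that the natural invariant $Q$ fails, and then only speculate about repairs (``strategically thinning $C$ or $C'$'', or a parity-augmented invariant) without carrying any of them out.  That speculation is the entire content of the argument, and it is not clear that any of the proposed fixes can succeed.  In your three-block construction the vertices of $C$ are universal in $X$ and the vertices of $C'$ are universal in $Y$, so once a $C'$-label reaches a $C$-position it can swap with any neighbor; this is exactly the freedom that powers the ``three-step maneuver'' you noticed, and it is hard to see how deleting a few edges can eliminate every such maneuver while keeping $\delta\geq 3n/5-O(1)$.  A $\mathbb Z/2$-valued refinement in the spirit of Proposition~\ref{prop:bipartite} also looks unpromising, since neither $X$ nor $Y$ is bipartite here.

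The paper's construction sidesteps all of this by using \emph{five} blocks rather than three.  Partition $[n]$ into $A_1,\dots,A_5$ of nearly equal size; put $\{x,y\}\in E(X)$ iff their block indices differ by $0$ or $\pm 1$ modulo $5$, and $\{x,y\}\in E(Y)$ iff their block indices differ by $0$ or $\pm 2$ modulo $5$.  Each vertex is then adjacent to roughly three of the five blocks, giving $\delta(X),\delta(Y)\geq 3n/5-O(1)$.  The crucial observation is that $\{0,\pm1\}\cap\{0,\pm2\}=\{0\}$ modulo $5$: if $\sigma$ satisfies $\sigma(A_i)=A_i$ for every $i$, then any $(X,Y)$-friendly swap must take place within a single block, so the property $\sigma(A_i)=A_i$ is preserved under all friendly swaps.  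Hence the identity bijection can never reach any bijection that mixes the blocks, and $\FS(X,Y)$ is disconnected.  No delicate invariant is needed---the partition itself is the invariant---and this is precisely what your three-block approach cannot achieve.
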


\begin{proof}
The proposition is trivial if $n\leq 4$, so we may assume $n\geq 5$.  We exhibt the bound by constructing an explicit family of examples. Partition the set $[n]$ into $5$ subsets $A_1,\ldots,A_5$, each of size $\left\lfloor n/5\right\rfloor$ or $\left\lfloor n/5\right\rfloor+1$. The edge sets $E(X)$ and $E(Y)$ are defined as follows. Suppose $x\in A_i$, $y\in A_j$, and $x\neq y$. We put $\{x,y\}\in E(X)$ if and only if $i-j\not\equiv\pm 2\pmod 5$, and we have $\{x,y\}\in E(Y)$ if and only if $i-j\not\equiv\pm 1\pmod 5$. It is straightforward to check that $\delta(X)$ and $\delta(Y)$ are each at least $3n/5-11/5$. Therefore, if we can show that $\FS(X,Y)$ is disconnected, then it will follow that $d_n\geq 3n/5-2$.

If $\sigma: [n]\to [n]$ is a bijection such that $\sigma(A_i)=A_i$ for all $i\in\{1,\ldots,5\}$ and $\sigma'$ is a bijection obtained by applying an $(X,Y)$-friendly swap to $\sigma$, then we must also have $\sigma'(A_i)=A_i$ for all $i\in\{1,\ldots,5\}$. Since $n\geq 5$, it follows that there exists a bijection $\tau:[n]\to [n]$ that is not in the same connected component of $\FS(X,Y)$ as the identity bijection $\Id:[n]\to [n]$.  
\end{proof}

\subsection{Upper bound}

The proof of the upper bound for $d_n$ given in Theorem~\ref{Thm:mindegree} is quite a bit more involved than the proof of the lower bound and will require the following lemma. 

\begin{lemma}\label{Lem:9/14}
Let $m\geq 1$, and let $G$ and $H$ be $m$-vertex graphs such that $\delta(G)$ and $\delta(H)$ are each at least $9m/14+1$. Let $\tau: V(G)\to V(H)$ be a bijection, and suppose $u,v\in V(H)$ are such that $\{\tau^{-1}(u),\tau^{-1}(v)\}\in E(G)$. Either $u$ and $v$ are $(G,H)$-exchangeable from $\tau$, 
or there exist vertices $w,x\in V(H)$ satisfying the following:
\begin{itemize}
\item $w\in N(u)\cap N(v)$.
\item $\tau^{-1}(x)\in N(\tau^{-1}(u))\cap N(\tau^{-1}(v))$.
\item $\{w,x\}\in E(H)$.
\item The induced subgraph $G\vert_{\tau^{-1}(N[w])}$ has vertex-disjoint connected subgraphs $\mathcal C_1$ and $\mathcal C_2$ such that $\tau^{-1}(u),\tau^{-1}(v),\tau^{-1}(x)\in V(\mathcal C_1)$ and $\tau^{-1}(w)\in V(\mathcal C_2)$. Furthermore, $2m/7\leq |V(\mathcal C_1)|\leq 3m/7$, $2m/7\leq |V(\mathcal C_2)|\leq 3m/7$, and each of $\mathcal C_1$ and $\mathcal C_2$ is Wilsonian. 
\end{itemize}
\end{lemma}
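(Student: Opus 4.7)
Let me set $a=\tau^{-1}(u)$, $b=\tau^{-1}(v)$, $W=N(u)\cap N(v)$, and $X'=N(a)\cap N(b)$. The minimum-degree hypothesis combined with inclusion-exclusion gives $|W|\geq 2m/7+2$ and $|X'|\geq 2m/7+2$. I will assume $u$ and $v$ are not $(G,H)$-exchangeable from $\tau$ and then produce the claimed $w$, $x$, $\mathcal{C}_1$, and $\mathcal{C}_2$. First I would rule out two easy exchange patterns. The \emph{triangle trick}: if some $w\in W$ has $\tau^{-1}(w)\in X'$, then the three friendly swaps across $\{a,\tau^{-1}(w)\}$, $\{a,b\}$, and $\{b,\tau^{-1}(w)\}$ (applied in that order) cycle the tokens $u$, $v$, $w$ and in particular exchange $u$ and $v$, contradicting non-exchangeability; hence $W\cap\tau(X')=\emptyset$. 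The \emph{quadrilateral trick}: if there exist $w\in W$ and $x'\in X'$ with both $\{\tau^{-1}(w),x'\}\in E(G)$ and $\{w,\tau(x')\}\in E(H)$, one first swaps $w$ with $\tau(x')$ (sliding $w$ onto $x'\in X'$), then runs the triangle trick using $w$'s new position, then undoes the initial swap; this also exchanges $u$ and $v$. So neither configuration occurs.

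Next, I would extract a suitable pair $(w,x)$ together with the data needed to build the Wilsonian pieces. Fix $w\in W$ and consider the local subgraph $G_w:=G|_{\tau^{-1}(N[w])}$. Since $u,v,w\in N[w]$, the vertices $a$, $b$, and $\tau^{-1}(w)$ all lie in $V(G_w)$, which has size at least $9m/14+2$; a second inclusion-exclusion yields that every vertex of $G_w$ has at least $2m/7+3$ neighbors inside $G_w$. Because $W\cap\tau(X')=\emptyset$, a vertex $x'\in X'$ lies in $V(G_w)$ if and only if $\tau(x')\in N(w)$, i.e., if and only if $\{w,\tau(x')\}\in E(H)$; thus finding $w\in W$ with $X'\cap V(G_w)\neq\emptyset$ amounts to finding an $H$-edge between $W$ and $\tau(X')$. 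If no such edge exists, then $W$ and $\tau(X')$ are disjoint sets each of size at least $2m/7+2$ with no $H$-edges between them, even though every $w\in W$ has at least $9m/14+1$ other $H$-neighbors; I expect this extreme structural degeneracy to be exploitable through an alternating-path or extended commutator argument that produces a direct exchange of $u$ and $v$, yielding a contradiction. Otherwise, we can pick $w\in W$ and $x'\in X'\cap V(G_w)$, and define $x:=\tau(x')$, so that $\{w,x\}\in E(H)$ and $\tau^{-1}(x)\in X'$ as required.

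Given $w$ and $x$, the final step is to extract from $V(G_w)$ two vertex-disjoint Wilsonian subgraphs $\mathcal{C}_1$ and $\mathcal{C}_2$ with $\{a,b,\tau^{-1}(x)\}\subseteq V(\mathcal{C}_1)$, $\tau^{-1}(w)\in V(\mathcal{C}_2)$, and $|V(\mathcal{C}_i)|\in [2m/7,3m/7]$ for $i=1,2$. Because the minimum degree of $G_w$ is at least $2m/7+3$, which already exceeds $3m/14=(3m/7)/2$, Lemma~\ref{lem:wilsonian} renders any subgraph of $G_w$ of size at most $3m/7$ Wilsonian provided that each of its vertices retains most of its $G_w$-neighbors. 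My plan is a greedy breadth-first growth, started simultaneously from the seed $\{a,b,\tau^{-1}(x)\}$ and from $\tau^{-1}(w)$, that repeatedly adds vertices preserving both disjointness and high internal degree, stopping each piece once its size enters $[2m/7,3m/7]$. The main obstacle is the rigid combination of constraints—disjointness, controlled size, prescribed containments, and the minimum-degree condition needed for Wilsonianness—which will not always be simultaneously achievable. Whenever the greedy construction fails, I expect the failure to manifest as a densely connected region of $G_w$ linking $\tau^{-1}(w)$ to $X'$; such a region, combined with Wilson's theorem applied to a Wilsonian subgraph of $G_w$ (obtained via Lemma~\ref{lem:wilsonian}), allows one to shuttle $w$ onto a vertex of $X'$ via friendly swaps that fix $a$ and $b$, and then to finish with the triangle trick—once again exchanging $u$ and $v$ and contradicting the initial assumption. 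The careful calibration of this last ``failure yields an exchange'' step is the main technical heart of the argument.
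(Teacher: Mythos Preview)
Your proposal correctly isolates the key dichotomy (whether there is an $H$-edge between $W=N(u)\cap N(v)$ and $\tau(X')$, where $X'=N(a)\cap N(b)$), but it has a genuine gap in the ``no such edge'' case. This case is exactly the paper's Case~1 ($A'\cap C'=\emptyset$, in its notation $A'=X'$, $B=W$, $C=N[W]$), and it is precisely here that the constant $9/14$ is determined. The paper does not use any ``alternating-path or extended commutator'' argument; instead it finds a set $Z\subseteq W$ (namely $Z=W\cap\tau(N(u'))$ or $W\cap\tau(N(v'))$, whichever is larger) such that $H|_Z$ is Wilsonian and $G|_{\tau^{-1}(Z\cup\{u,v\})}$ contains a spanning star, then applies Wilson's theorem directly. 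Showing that $H|_Z$ is Wilsonian requires verifying $\delta(H|_Z)>|Z|/2$, and the chain of inequalities that accomplishes this (using $|A'|+|C'|\leq m$, $|Z|\geq\tfrac12(|B|+|D'|-m)$, and $N(z)\subseteq C$ for $z\in Z$) collapses to $\tfrac72 c-\tfrac94\geq 0$, i.e., $c\geq 9/14$. Your ``I expect this extreme structural degeneracy to be exploitable'' does not supply this argument, and without it the lemma is not proved for the stated constant.

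Your treatment of the other case (an $H$-edge from $W$ to $\tau(X')$ exists, giving $w$ and $x$) is also much vaguer than necessary. The paper's route here is cleaner than your greedy BFS growth: it simply attempts to move the token $w$ along a $G$-path inside $\tau^{-1}(N[w]\setminus\{u,v\})$ from $\tau^{-1}(w)$ to $\tau^{-1}(x)$ (or, failing that, through one of $\tau^{-1}(u),\tau^{-1}(v)$ via two explicit swap sequences). If none of these paths exist, then $\tau^{-1}(w)$ and $\tau^{-1}(x)$ lie in different connected components of $G|_{\tau^{-1}(N[w]\setminus\{u\})}$ (or of $G|_{\tau^{-1}(N[w]\setminus\{v\})}$); taking these components as $\mathcal C_2$ and $\widetilde{\mathcal C}_1$ and adjoining $\tau^{-1}(u)$ to the latter to form $\mathcal C_1$ gives the desired subgraphs. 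The size bounds $2m/7\leq|V(\mathcal C_i)|\leq 3m/7$ and the Wilsonian property then follow immediately from $\delta(\mathcal C_i)\geq (2c-1)m=2m/7$ and $|N[w]|\leq 2(1-c)m$, without any iterative construction. Your ``failure yields an exchange'' intuition is morally right, but the paper reverses the logic: it tries the exchange first and lets the failure \emph{be} the disconnection that hands you the components for free.
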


\begin{proof}
In order to illustrate where the number $9/14$ arises, we let $c=9/14$. To ease notation, let $u'=\tau^{-1}(u)$ and $v'=\tau^{-1}(v)$. Let $A'=N(u')\cap N(v')$, $B=N(u)\cap N(v)$, $C=N[B]$, and $D'=N(u')\cup N(v')$. Let $A=\tau(A')$, $B'=\tau^{-1}(B)$, $C'=\tau^{-1}(C)$, and $D=\tau(D')$. Note that 
\begin{equation}\label{Eq1}
|A|=|A'|\geq (2c-1)m,
\end{equation}
and
\begin{equation}\label{Eq2}
|B|=|B'|\geq (2c-1)m.
\end{equation}
We have $|D'|=|N(u')|+|N(v')|-|N(u') \cap N(v')|=|N(u')|+|N(v')|-|A'|$, so 
\begin{equation}\label{Eq3}
|D|=|D'|\geq 2cm-|A'|.
\end{equation}
There are two cases to consider. 

\noindent {\bf Case 1:} $A'\cap C'=\emptyset$. In this case, we have the inequality
\begin{equation}\label{Eq4}
|A'|+|C'|\leq m.
\end{equation} 
We may assume without loss of generality that \[|B' \cap N(u')|\geq |B'\cap N(v')|.\] Thus, letting $Z=\tau(B'\cap N(u'))=B\cap \tau(N(u'))$, we have \[|Z|=|B'\cap N(u')|\geq \frac{1}{2}(|B' \cap N(u')|+|B'\cap N(v')|)\geq\frac{1}{2}|B'\cap D'|=\frac{1}{2}|B'\setminus(V(G)\setminus D')|\] 
\begin{equation}\label{Eq5}
\geq \frac{1}{2}(|B'|-|V(G)\setminus D'|)=\frac{1}{2}(|B|+|D'|-m).
\end{equation} We are going to show that the induced subgraph $H\vert_Z$ is Wilsonian. Lemma~\ref{lem:wilsonian} tells us that it suffices to show that $|N(z)\cap Z|\geq \dfrac{1}{2}|Z|$ for all $z\in Z$. Choose $z\in Z$. Since $Z\subseteq B$, we have $N(z)\subseteq C$, so \[|N(z)\cap Z|=|N(z)|-|N(z)\setminus Z|\geq cm-|N(z)\setminus Z|\geq cm-(|C|-|Z|).\] We want to show that $cm-(|C|-|Z|)\geq \dfrac{1}{2}|Z|$, which we can rewrite as $cm-|C|+\dfrac{1}{2}|Z|\geq 0$. Now, using \eqref{Eq1}, \eqref{Eq2}, \eqref{Eq3}, \eqref{Eq4}, and \eqref{Eq5}, we find that \[cm-|C|+\dfrac{1}{2}|Z|\geq cm-|C|+\frac{1}{4}(|B|+|D'|-m)\geq cm-|C|+\frac{1}{4}(|B|+(2cm-|A'|)-m)\] \[=cm-|C|+\frac{1}{4}|B|+\frac{1}{4}(2cm-m)-\frac{1}{4}|A'|=cm-(|C'|+|A'|)+\frac{1}{4}|B|+\frac{1}{4}(2cm-m)+\frac{3}{4}|A'|\] \[\geq cm-m+\frac{1}{4}|B|+\frac{1}{4}(2cm-m)+\frac{3}{4}|A'|\geq cm-m+\frac{1}{4}(2c-1)m+\frac{1}{4}(2cm-m)+\frac{3}{4}(2c-1)m\] \[=\frac{7}{2}cm-\frac{9}{4}m=0.\] (This is where the number $9/14$ arises.) This demonstrates that $H\vert_Z$ is Wilsonian. Since $Z\subseteq B=N(u)\cap N(v)$, it follows from Definition~\ref{def:wilsonian} that the induced subgraph $H\vert_{Z\cup\{u,v\}}$ is also Wilsonian. Furthermore, the induced subgraph $G\vert_{\tau^{-1}(Z\cup\{u,v\})}=G\vert_{(B'\cap N(u'))\cup\{u',v'\}}$ contains a spanning star with center $u'$. By Wilson's Theorem~\ref{thm:wilson}, the graph $\FS(G\vert_{\tau^{-1}(Z\cup\{u,v\})},H\vert_{Z\cup\{u,v\}})$ is connected. Therefore, there is a sequence $\Sigma$ of $(G\vert_{\tau^{-1}(Z\cup\{u,v\})},H\vert_{Z\cup\{u,v\}})$-friendly swaps that exchanges $u$ and $v$ when applied to $\tau\vert_{\tau^{-1}(Z\cup\{u,v\})}$. We can view $\Sigma$ as a sequence of $(G,H)$-friendly swaps that exchanges $u$ and $v$ when applied to $\tau$; consequently, $u$ and $v$ are $(G,H)$-exchangeable from $\tau$.  

\noindent {\bf Case 2:} $A'\cap C'\neq\emptyset$. In this case, there exist $w\in B$ and $x\in N[w]$ such that $\tau^{-1}(x)\in A'=N(u')\cap N(v')$. If $x=w$, then applying the sequence of $(G,H)$-friendly swaps $wu,wv,wu$ to $\tau$ exchanges $u$ and $v$, implying that $u$ and $v$ are $(G,H)$-exchangeable from $\tau$. Now suppose $x\neq w$ so that $x\in N(w)$.  

First, assume that there exists a path $\tau^{-1}(w),\tau^{-1}(t_1),\tau^{-1}(t_2),\ldots,\tau^{-1}(t_r),\tau^{-1}(x)$ in $G$ such that $t_1,\ldots,t_r\in N(w)\setminus\{u,v\}$.  Applying the sequence of $(G,H)$-friendly swaps $$wt_1, wt_2,\ldots, wt_r, wx, wu, wv, wu, wx, wt_r,\ldots,wt_2,wt_1$$ to $\tau$ exchanges $u$ and $v$, implying that $u$ and $v$ are $(G,H)$-exchangeable from $\tau$ in this case.

Second, assume that there exist paths $\tau^{-1}(w),\tau^{-1}(t_1),\tau^{-1}(t_2),\ldots,\tau^{-1}(t_r),\tau^{-1}(u),\tau^{-1}(x)$ and $\tau^{-1}(w),\tau^{-1}(\widetilde t_1),\tau^{-1}(\widetilde t_2),\ldots,\tau^{-1}(\widetilde t_r),\tau^{-1}(v),\tau^{-1}(x)$ in $G$ such that $t_1,\ldots,t_r,\widetilde t_1,\ldots,\widetilde t_r\in N(w)\setminus\{u,v\}$. Applying the sequence of $(G,H)$-friendly swaps \[wt_1, wt_2,\ldots, wt_r, wu, wx, wv, wx, wu, wt_r,\ldots,wt_2,wt_1,\]
\[w\widetilde t_1, w\widetilde t_2,\ldots, w\widetilde t_r, wx, wv, wu, wv, wx, w\widetilde t_r,\ldots,w\widetilde t_2,w\widetilde t_1,\] \[wt_1, wt_2,\ldots, wt_r, wv, wu, wx, wu, wv, wt_r,\ldots,wt_2,wt_1\] to $\tau$ exchanges $u$ and $v$, implying that $u$ and $v$ are $(G,H)$-exchangeable from $\tau$ in this case as well.

The last remaining subcase is that in which $w$ and $x$ are in different connected components of either $G\vert_{\tau^{-1}(N[w]\setminus\{u\})}$ or $G\vert_{\tau^{-1}(N[w]\setminus\{v\})}$; without loss of generality, we may assume they are in different connected components of $G\vert_{\tau^{-1}(N[w]\setminus\{u\})}$. Let $\mathcal C_2$ be the connected component of $G\vert_{\tau^{-1}(N[w]\setminus\{u\})}$ containing $\tau^{-1}(w)$, and let $\widetilde{\mathcal C}_1$ be the connected component of $G\vert_{\tau^{-1}(N[w]\setminus\{u\})}$ containing $\tau^{-1}(x)$. Note that $\tau^{-1}(v)\in V(\widetilde{\mathcal C}_1)$. 

Each vertex in $\widetilde{\mathcal C}_1$ has at least $cm+1$ neighbors in $G$, and at most $m-|N[w]|$ of these neighbors lie outside $\tau^{-1}(N[w])$. Each neighbor of a vertex in $\widetilde{\mathcal C}_1$ that is in $\tau^{-1}(N[w])$ must be either equal to $\tau^{-1}(u)$ or in $\widetilde{\mathcal C}_1$. Therefore, the graph $\widetilde{\mathcal C}_1$ has minimum degree at least $cm+1-(m-|N[w]|)-1=(c-1)m+|N[w]|$.  It follows that \[|V(\widetilde{\mathcal C}_1)|\geq(c-1)m+|N[w]|\geq (2c-1)m=\dfrac{2}{7}m.\] The same argument shows that  $\mathcal C_2$ has minimum degree at least $(c-1)m+|N[w]|$ and that $|V(\mathcal C_2)|\geq 2m/7$.  Now, $|N[w]|-1\geq |V(\widetilde{\mathcal C}_1)|+|V(\mathcal C_2)|\geq 2((c-1)m+|N[w]|)$, so $|N[w]|\leq 2(1-c)m-1=5m/7-1$. Because $|V(\widetilde{\mathcal C}_1)|\geq 2m/7$, this shows that $|V(\mathcal C_2)|\leq |N[w]|-|V(\widetilde{\mathcal C}_1)|\leq 3m/7-1$. The minimum degree of $\mathcal C_2$ satisfies \[\delta(\mathcal C_2)\geq (c-1)m+|N[w]|\geq (2c-1)m=\dfrac{2}{7}m>\dfrac{1}{2}|V(\mathcal C_2)|,\] so Lemma~\ref{lem:wilsonian} tells us that $\mathcal C_2$ is Wilsonian. A similar argument shows that $\widetilde{\mathcal C}_1$ is Wilsonian and satisfies $|V(\mathcal C_1)|\leq 3m/7-1$. Finally, let $\mathcal C_1$ be the induced subgraph of $G$ on the vertex set $V(\widetilde{\mathcal C}_1)\cup\{\tau^{-1}(u)\}$. We have $2m/7\leq |V(\mathcal C_1)|\leq 3m/7$. Since $\tau^{-1}(u)$ is adjacent to $\tau^{-1}(v)$ and $\tau^{-1}(x)$, both of which are vertices in the Wilsonian graph $\widetilde{\mathcal C}_1$, it follows that $\mathcal C_1$ is Wilsonian. 
\end{proof}

\begin{proposition}
If $n\geq 16$, then $d_n\leq \dfrac{9}{14}n+2$. 
\end{proposition}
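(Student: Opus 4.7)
The plan is to use Lemma~\ref{lem:exchangeable-K_n}: since $\delta(X) \geq 9n/14 + 2 > n/2$ forces $X$ to be connected, it suffices to prove that for every bijection $\sigma : V(X) \to V(Y)$ and every pair $u,v \in V(Y)$ with $\{p_u,p_v\}:=\{\sigma^{-1}(u),\sigma^{-1}(v)\} \in E(X)$, the vertices $u$ and $v$ are $(X,Y)$-exchangeable from $\sigma$. For any such $\sigma,u,v$, I would invoke Lemma~\ref{Lem:9/14} with $G=X$, $H=Y$, $\tau=\sigma$, and $m=n$, which applies because $\delta(X),\delta(Y) \geq 9n/14+2 > 9n/14+1$. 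The lemma's first alternative is exactly exchangeability, so we are done in that case.

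The substance of the proof is handling the second, structural alternative, which produces vertices $w,x \in V(Y)$ with $\{w,x\} \in E(Y)$ together with vertex-disjoint Wilsonian subgraphs $\mathcal{C}_1,\mathcal{C}_2$ of $X|_{\sigma^{-1}(N[w])}$ satisfying $p_u,p_v,p_x \in V(\mathcal{C}_1)$, $p_w \in V(\mathcal{C}_2)$, and $|V(\mathcal{C}_i)| \in [2n/7, 3n/7]$. The plan exploits two permutation gadgets. First, because $w \in \sigma(V(\mathcal{C}_2))$ is adjacent in $Y$ to every other vertex of $\sigma(V(\mathcal{C}_2)) \subseteq N[w]$, the subgraph $Y|_{\sigma(V(\mathcal{C}_2))}$ contains a spanning star centered at $w$; combining this with the Wilsonian $\mathcal{C}_2$ and the isomorphism $\FS(X,Y) \cong \FS(Y,X)$, Theorem~\ref{thm:wilson} yields that the labels on $V(\mathcal{C}_2)$ can be permuted arbitrarily using only $(X,Y)$-friendly swaps that remain inside $V(\mathcal{C}_2)$. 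Second, the analogous reasoning applied to $\widetilde{\mathcal{C}}_1:=\mathcal{C}_1 \setminus \{p_u\}$ (which is Wilsonian by Lemma~\ref{lem:wilsonian}, its minimum degree exceeding half its order as shown in the proof of Lemma~\ref{Lem:9/14}) allows arbitrary permutation of the labels on $V(\widetilde{\mathcal{C}}_1)$ whenever the label $w$ has been placed at a vertex of $V(\widetilde{\mathcal{C}}_1)$.

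From here I would construct an explicit exchange of $u$ and $v$. Use the $\mathcal{C}_2$-gadget to bring $w$ to a vertex $q$ of $V(\mathcal{C}_2)$ adjacent to $p_u$ in $X$ (the subcase where $p_u$ has no neighbor in $V(\mathcal{C}_2)$ is handled separately, exploiting edges from $p_u$ to the complement $T:=V(X)\setminus\sigma^{-1}(N[w])$ together with the large common neighborhoods $|N(p_u)\cap N(p_v)|_X,\,|N(u)\cap N(v)|_Y\geq 2n/7$ guaranteed by the minimum-degree assumption). The swaps $w\,u$ across $\{q,p_u\}$ and then $w\,v$ across $\{p_u,p_v\}$ move $u$ into $V(\mathcal{C}_2)$ and place $w$ at $p_v\in V(\widetilde{\mathcal{C}}_1)$. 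The $\widetilde{\mathcal{C}}_1$-gadget then reshuffles $w$ (and with it, $x$) across $\mathcal{C}_1$, and a careful symmetric tail of swaps through $p_x$ (using the edges $\{p_x,p_u\},\{p_x,p_v\}\in E(X)$ guaranteed by $p_x\in N(p_u)\cap N(p_v)$) ultimately restores $w$ to $p_w$ while placing $u$ at $p_v$ and $v$ at $p_u$, leaving every other label at its original position.

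The main obstacle is the coordination between the two Wilsonian components, since $\mathcal{C}_1$ and $\mathcal{C}_2$ meet in $X|_{\sigma^{-1}(N[w])}$ only (possibly) at the cut vertex $p_u$, and since labels at positions in $T$ are never $Y$-adjacent to $w$ so that $w$ cannot traverse directly through $T$. The value $c=9/14$ in Lemma~\ref{Lem:9/14} is calibrated precisely so that both $\mathcal{C}_1$ and $\mathcal{C}_2$ (as well as $\widetilde{\mathcal{C}}_1$) remain large and dense enough to be Wilsonian and so that the various common-neighbor and bridging estimates go through at each step of the explicit sequence; verifying these estimates at every transition between configurations is what will constitute the bulk of the proof.
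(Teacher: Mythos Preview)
Your setup is correct: invoking Lemma~\ref{Lem:9/14} once with $(G,H,\tau)=(X,Y,\sigma)$ and reducing to the structural alternative is exactly how the paper begins. The gap is in how you propose to finish. The sequence you outline does not close up. Suppose $p_u$ does have an $X$-neighbor $q\in V(\mathcal C_2)$. After moving $w$ to $q$ and performing $wu,\,wv$, you have $u$ sitting at $q$, $v$ at $p_u$, and $w$ at $p_v\in V(\widetilde{\mathcal C}_1)$. Any ``symmetric tail'' that returns $w$ to $\mathcal C_2$ must send $w$ back through $p_u$ (since $\widetilde{\mathcal C}_1$ and $\mathcal C_2$ are distinct components of $X\vert_{\sigma^{-1}(N[w]\setminus\{u\})}$, so there is no $X$-edge between them), and the swap across $\{p_u,q\}$ then puts $u$ back at $p_u$, undoing the exchange. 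Nor can you move $u$ elsewhere first: the only $Y$-adjacency of $u$ you control is $\{u,w\}$, and with $\delta(Y)=9n/14+O(1)$ there is no guarantee that $u$ has any $Y$-neighbor among the labels currently on $\mathcal C_2$. The ``$p_u$ has no neighbor in $\mathcal C_2$'' subcase is worse, not better: labels on $T=V(X)\setminus\sigma^{-1}(N[w])$ are never $Y$-adjacent to $w$, so $w$ cannot enter $T$, and the common-neighborhood bounds you cite were already consumed in the proof of Lemma~\ref{Lem:9/14} to produce $w$ and $x$ in the first place.

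What the paper does, and what your plan is missing, is a \emph{second} application of Lemma~\ref{Lem:9/14} with the roles of $X$ and $Y$ reversed. One passes to $G=Y\vert_{V(Y)\setminus\{u,v\}}$, $H=X\vert_{V(X)\setminus\{u',v'\}}$, $\tau=\sigma^{-1}\vert$, and observes that if $w_0:=w$ and $x_0:=x$ were $(H,G)$-exchangeable from $\tau^{-1}$ then the sequence $\Sigma,\,w_0u,\,w_0v,\,w_0u,\,\rev(\Sigma)$ would exchange $u$ and $v$. So they are not, and the lemma yields a second pair of Wilsonian pieces $\mathcal D_1,\mathcal D_2\subseteq Y\vert_{\sigma(N[w_1'])}$ together with a spanning star in $X\vert_{\sigma^{-1}(V(\mathcal D_2))}$ centred at some new vertex $w_1'$. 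It is the interplay of the two dual structures---$(\mathcal C_1,\mathcal C_2)$ with its $Y$-star at $w_0$ and $(\mathcal D_1,\mathcal D_2)$ with its $X$-star at $w_1'$---together with a pigeonhole/counting argument forcing many $Y$-edges from $\mathcal D_2$ into $\sigma(V(\mathcal C_1))$, that produces enough bridging edges to write down a working exchange sequence. A single application of the lemma does not give you enough structure to finish.
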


\begin{proof}
Let $X$ and $Y$ be $n$-vertex graphs such that $\delta(X)\geq 9n/14+2$ and $\delta(Y)\geq 9n/14+2$. We will consider subgraphs of $X$ and $Y$, but the notation $N(\cdot)$ and $N[\cdot]$ will always refer to open and closed neighborhoods in the full graphs $X$ and $Y$. Fix a bijection $\sigma: V(X)\to V(Y)$ and vertices $u,v\in V(Y)$ such that $\{\sigma^{-1}(u),\sigma^{-1}(v)\}\in E(X)$. We will prove that $u$ and $v$ are $(X,Y)$-exchangeable from $\sigma$. As $X$ is certainly connected and $u$ and $v$ were chosen arbitrarily, it will then follow from Lemma~\ref{lem:exchangeable-K_n} that $\FS(X,Y)$ is connected. This will prove that $d_n\leq 9n/14+2$. 

Suppose instead that $u$ and $v$ are not $(X,Y)$-exchangeable from $\sigma$. Applying Lemma~\ref{Lem:9/14} (with $X$, $Y$, and $\sigma$ playing the roles of $G$, $H$, and $\tau$), we find that there exist $w_0,x_0\in V(Y)$ such that the following hold:
\begin{itemize}
\item $w_0\in N(u)\cap N(v)$.
\item $\sigma^{-1}(x_0)\in N(\sigma^{-1}(u))\cap N(\sigma^{-1}(v))$.
\item $\{w_0,x_0\}\in E(Y)$.
\item The induced subgraph $X\vert_{\sigma^{-1}(N[w_0])}$ has vertex-disjoint connected subgraphs $\mathcal C_1$ and $\mathcal C_2$ such that $\sigma^{-1}(u),\sigma^{-1}(v),\sigma^{-1}(x_0)\in V(\mathcal C_1)$ and $\tau^{-1}(w_0)\in V(\mathcal C_2)$. Furthermore, $2m/7\leq |V(\mathcal C_1)|\leq 3m/7$, $2m/7\leq |V(\mathcal C_2)|\leq 3m/7$, and each of $\mathcal C_1$ and $\mathcal C_2$ is Wilsonian.
\end{itemize}

To ease notation, let us write $u'=\sigma^{-1}(u)$, $v'=\sigma^{-1}(v)$, $w_0'=\sigma^{-1}(w_0)$, and $x_0'=\sigma^{-1}(x_0)$. Let us also write $\sigma(\mathcal C_i)$ for the induced subgraph of $Y$ on the vertex set $\sigma(V(\mathcal C_i))$. 
Let $m=n-2$, and let $H=X\vert_{V(X)\setminus\{u',v'\}}$ and $G=Y\vert_{V(Y)\setminus\{u,v\}}$. Let $\tau:V(G)\to V(H)$ be the restriction of $\sigma^{-1}$ to the set $V(G)=V(Y)\setminus\{u,v\}$. Suppose $w_0$ and $x_0$ are $(H,G)$-exchangeable from $\tau^{-1}$. This means that there is a sequence $\Sigma$ of $(H,G)$-friendly swaps such that applying $\Sigma$ to $\tau^{-1}$ exchanges $w_0$ and $x_0$. We may view $\Sigma$ as a sequence of $(X,Y)$-friendly swaps that does not involve $u$ or $v$. Applying the sequence of $(X,Y)$-friendly swaps $\Sigma, w_0u, w_0v, w_0u,\rev(\Sigma)$ to $\sigma$ exchanges $u$ and $v$. This contradicts the assumption that $u$ and $v$ are not $(X,Y)$-exchangeable from $\sigma$, so we conclude that the vertices $w_0$ and $x_0$ are not $(H,G)$-exchangeable from $\tau^{-1}$. By Lemma~\ref{lem:exchangeabilityinverse}, the vertices $w_0'$ and $x_0'$ are not $(G,H)$-exchangeable from $\tau$. Note that $G$ and $H$ are $m$-vertex graphs such that $\delta(G)$ and $\delta(H)$ are at least $(9n/14+2)-2\geq 9m/14+1$. Furthermore, $w_0',x_0'\in V(H)$ are such that $\{\tau^{-1}(w_0'),\tau^{-1}(x_0')\}=\{w_0,x_0\}\in E(G)$. It now follows from Lemma~\ref{Lem:9/14} that there exist vertices $w_1',x_1'\in V(H)=V(X)\setminus\{u',v'\}$ such that, using the notation $w_1=\sigma(w_1')$ and $x_1=\sigma(x_1')$, we have:
\begin{itemize}
\item $w_1'\in N(w_0')\cap N(x_0')$.
\item $\tau^{-1}(x_1')\in N(\tau^{-1}(w_0'))\cap N(\tau^{-1}(x_0'))=N(w_0)\cap N(x_0)$.
\item $\{w_1',x_1'\}\in E(H)\subseteq E(X)$.
\item The induced subgraph $G\vert_{\tau^{-1}(N[w_1'])}=Y\vert_{\sigma(N[w_1'])}$ has vertex-disjoint connected subgraphs $\mathcal D_1$ and $\mathcal D_2$ such that the vertices $\tau^{-1}(w_0')=w_0$, $\tau^{-1}(x_0')=x_0$, and $\tau^{-1}(x_1')=x_1$ are in $\mathcal D_1$ and the vertex $\tau^{-1}(w_1')=w_1$ is in $\mathcal D_2$. Furthermore, $2m/7\leq |V(\mathcal D_1)|\leq 3m/7$, $2m/7\leq |V(\mathcal D_2)|\leq 3m/7$, and each of $\mathcal D_1$ and $\mathcal D_2$ is Wilsonian.
\end{itemize}
Let us write $\sigma^{-1}(\mathcal D_i)$ for the induced subgraph of $X$ on the vertex set $\sigma^{-1}(V(\mathcal D_i))$. Because $w_1'\in \sigma^{-1}(V(\mathcal D_2))\subseteq N[w_1']$, the graph $\sigma^{-1}(\mathcal D_2)$ contains a spanning star with center $w_1'$. 

Figure~\ref{Fig1} shows a schematic diagram indicating some of the special vertices and sets of vertices in $X$ and $Y$ that we have considered up to this point. Note that the circle representing $\sigma^{-1}(\mathcal D_2)$ is drawn so that the vertex set of this subgraph appears to be disjoint from those of $\mathcal C_1$ and $\mathcal C_2$. Our next goal is to prove that $V(\mathcal D_2)\cap N[w_0]=\emptyset$ (so that the figure is indeed accurate). 

\begin{figure}[ht]
\begin{center}
\includegraphics[height=6cm]{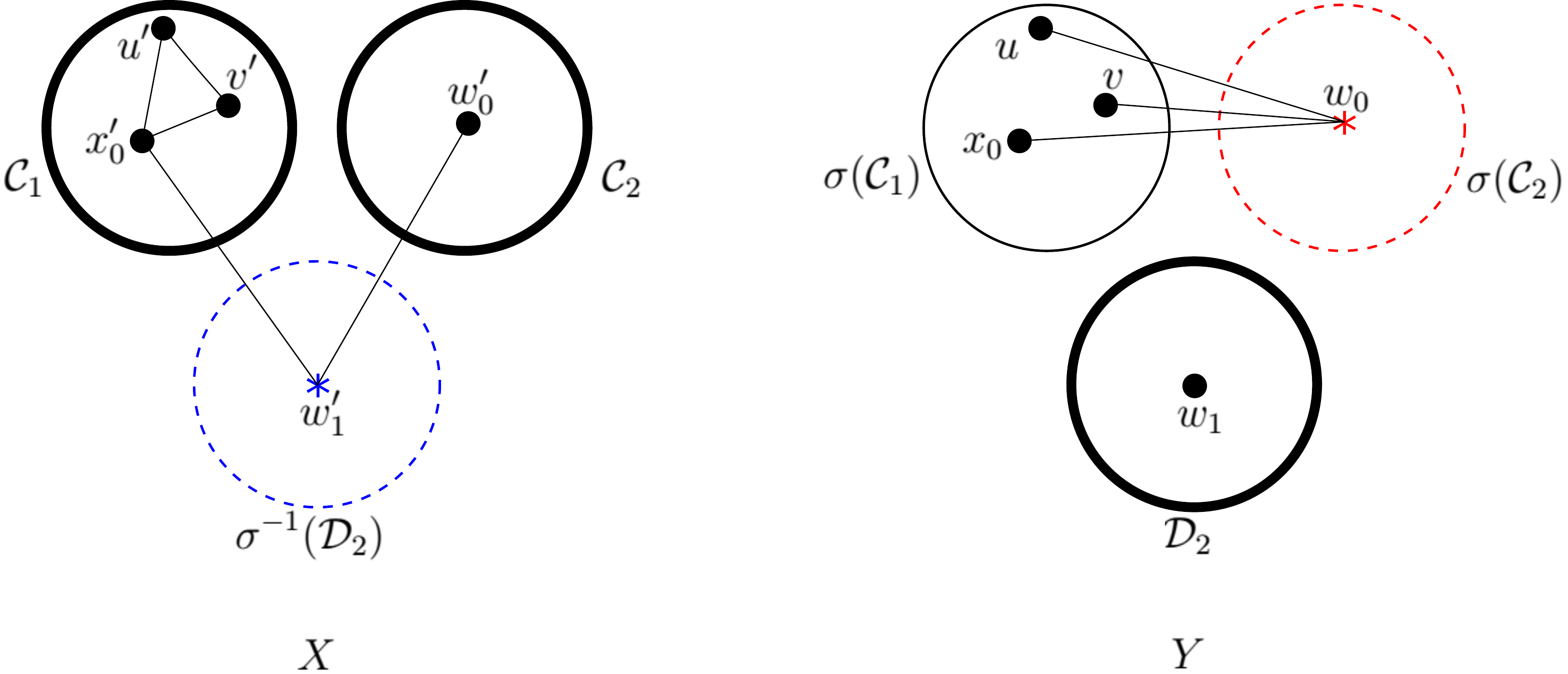}
\caption{In each of $X$ and $Y$, the three circles represent induced subgraphs. A thick circle indicates a Wilsonian induced subgraph. A colored dotted circle indicates an induced subgraph with a spanning star, and the asterisk of the same color marks the center of that star.}
\label{Fig1}
\end{center}  
\end{figure}

Suppose there exists $q\in V(\mathcal D_2)\cap N[w_0]$. Because $\mathcal D_2$ is Wilsonian and $\sigma^{-1}(\mathcal D_2)$ contains a spanning star with center $w_1'$, we can use Wilson's Theorem~\ref{thm:wilson} to deduce that there is a sequence $\Sigma$ of $(\sigma^{-1}(\mathcal D_2),\mathcal D_2)$-friendly swaps that transforms $\sigma\vert_{\sigma^{-1}(V(\mathcal D_2))}$ into a bijection that sends $w_1'$ to $q$. If $q=w_0$, then let $\Sigma'=\Sigma$. If $q\neq w_0$, then let $\Sigma'$ be the sequence $\Sigma,qw_0$. In either case, we can view $\Sigma'$ as a sequence of $(X,Y)$-friendly swaps that does not involve $u$ or $v$ and that transforms $\sigma$ into a bijection $\mu: V(X)\to V(Y)$ satisfying $\mu(w_1')=w_0$. Applying the sequence of $(X,Y)$-friendly swaps \[\Sigma',w_0x_0,w_0u,w_0v,w_0u,w_0x_0,\rev(\Sigma')\] to $\sigma$ exchanges $u$ and $v$, which contradicts the assumption that $u$ and $v$ are not $(X,Y)$-exchangeable from $\sigma$. Therefore, we must have \[V(\mathcal D_2)\cap N[w_0]=\emptyset.\] 

Consider a vertex $h\in V(\mathcal D_2)$. Suppose by way of contradiction that there exist distinct vertices $s_1,s_2\in N(h)\cap\sigma(V(\mathcal C_2))$. Because $\delta(X)$ and $\delta(Y)$ are at least $9n/14+2$, we can certainly find distinct vertices $t_1,t_2\in N(w_0)\cap\sigma(N(w_0'))$. Note that the vertices $t_1'=\sigma^{-1}(t_1)$ and $t_2'=\sigma^{-1}(t_2)$ are in the same connected component of $X\vert_{\sigma^{-1}(N[w_0])}$ as $w_0'$, namely, $\mathcal C_2$. Because $\mathcal C_2$ is Wilsonian and $\sigma(\mathcal C_2)$ contains a spanning star with center $w_0$, we can invoke Wilson's Theorem~\ref{thm:wilson} in order to see that there is a sequence $\Sigma$ of $(\mathcal C_2,\sigma(\mathcal C_2))$-friendly swaps that transforms $\sigma\vert_{V(\mathcal C_2)}$ into a bijection that sends $t_1'$ to $s_1$, sends $t_2'$ to $w_0$, and sends $w_0'$ to $s_2$. We also know that $\mathcal D_2$ is Wilsonian and that $\sigma^{-1}(\mathcal D_2)$ contains a spanning star with center $w_1'$, so we can use Wilson's Theorem~\ref{thm:wilson} once again to obtain a sequence $\Sigma'$ of $(\sigma^{-1}(\mathcal D_2),\mathcal D_2)$-friendly swaps that transforms $\sigma\vert_{\sigma^{-1}(V(\mathcal D_2))}$ into a bijection that sends $w_1'$ to $h$. We have seen that $V(\mathcal D_2)\cap N[w_0]=\emptyset$, so the sequences $\Sigma$ and $\Sigma'$ involve disjoint sets of vertices. Furthermore, $\Sigma$ and $\Sigma'$ do involve $u$, $v$, or $x_0$.  Therefore, we may view $\Sigma,\Sigma'$ as a sequence of $(X,Y)$-friendly swaps that transforms $\sigma$ into a bijection $\mu$ satisfying \[\mu(u')=u, \quad \mu(v')=v,\quad\mu(x_0')=x_0,\quad\mu(t_1')=s_1,\quad\mu(t_2')=w_0,\quad\mu(w_0')=s_2,\quad\mu(w_1')=h.\] However, we now readily check that applying the sequence \[\Sigma,\Sigma',hs_2,hs_1,w_0s_1,w_0s_2,
w_0x_0,w_0u,w_0v,w_0u,w_0x_0,w_0s_2,w_0s_1,hs_1,hs_2,\rev(\Sigma'),\rev(\Sigma)\] of $(X,Y)$-friendly swaps to $\sigma$ exchanges $u$ and $v$, which is a contradiction. From this contradiction, we deduce that $|N(h)\cap\sigma(V(\mathcal C_2))|\leq 1$. As a consequence, we find that $|N(h)\cap (V(Y)\setminus \sigma(V(\mathcal C_2)))|\geq 9n/14+1$. As $|V(Y)\setminus(V(\mathcal C_2))|=n-|V(\mathcal C_2)|\leq 5n/7$, there are at most $n/14-1$ vertices in $V(Y)\setminus \sigma(V(\mathcal C_2))$ that are not adjacent to $h$. Because $|V(\mathcal C_1)\setminus\{u,v\}|\geq 2n/7-2$, it follows that \[|N(h)\cap(V(\mathcal C_1)\setminus\{u,v\})|\geq |V(\mathcal C_1)\setminus\{u,v\}|-\left(\frac{1}{14}n-1\right)\geq \frac{3}{4}|V(\mathcal C_1)\setminus\{u,v\}|+\frac{1}{2}.\] As $h$ was arbitrary, this demonstrates that every vertex in $\mathcal D_2$ has at least $\dfrac{3}{4}|V(\mathcal C_1)\setminus\{u,v\}|+\dfrac{1}{2}$ neighbors in $V(\mathcal C_1)\setminus\{u,v\}$. 

Let $Q=V(X)\setminus(V(\mathcal C_1)\cup V(\mathcal C_2)\cup\sigma^{-1}(V(\mathcal D_2)))$. We have seen that $V(\mathcal C_1)$, $V(\mathcal C_2)$, and $\sigma^{-1}(V(\mathcal D_2))$ are disjoint (since $V(\mathcal D_2)\cap N[w_0]=\emptyset$), so \[|Q|=n-|V(\mathcal C_1)|-|V(\mathcal C_2)|-|V(\mathcal D_2)|\leq n-|V(\mathcal C_1)|-\frac{2}{7}n-\frac{2}{7}m=\frac{3}{7}n-|V(\mathcal C_1)|+\frac{4}{7}.\] This shows that if $t\in V(\mathcal C_1)$, then the number of neighbors of $t$ in $V(\mathcal C_2)\cup\sigma^{-1}(V(\mathcal D_2))$ satisfies \[|N(t)\cap(V(\mathcal C_2)\cup\sigma^{-1}(V(\mathcal D_2)))|\geq \frac{9}{14}n+2-|N(t)\cap V(\mathcal C_1)|-|N(t)\cap Q|\] \[\geq \frac{9}{14}n+2-(|V(\mathcal C_1)|-1)-\left(\frac{3}{7}n-|V(\mathcal C_1)|+\frac{4}{7}\right)=\frac{3}{14}n+\frac{17}{7}.\] Since $|\sigma^{-1}(\mathcal D_2)|\leq 3m/7=3(n-2)/7$ and each vertex in $\mathcal C_1$ has at least $3n/14+17/7$ neighbors in $\sigma^{-1}(\mathcal D_2)$, we can use the pigeonhole principle to see that there exists a vertex $y'\in \sigma^{-1}(V(\mathcal D_2))$ with at least \[\frac{|V(\mathcal C_1)|(3n/14+17/7)}{3(n-2)/7}\geq\frac{1}{2}|V(\mathcal C_1)|+\frac{17}{3(n-2)}|V(\mathcal C_1)|\geq\frac{1}{2}|V(\mathcal C_1)|+\frac{17}{3(n-2)}\cdot\frac{2}{7}n\] \[\geq \frac{1}{2}|V(\mathcal C_1)\setminus\{u,v\}|+\frac{55}{21}\] neighbors in $\mathcal C_1$. Letting $Z'=N(y')\cap(V(\mathcal C_1)\setminus\{u,v\})$, we have \[|Z'|\geq \frac{1}{2}|V(\mathcal C_1)\setminus\{u,v\}|+\frac{13}{21}.\]

We have seen that each vertex in $\mathcal D_2$ has at least $\dfrac{3}{4}|V(\mathcal C_1)\setminus\{u,v\}|+\dfrac{1}{2}$ neighbors in $V(\mathcal C_1)\setminus\{u,v\}$. Among these neighbors, at most $\dfrac{1}{2}|V(\mathcal C_1)\setminus\{u,v\}|-\dfrac{13}{21}$ are not in the set $Z=\sigma(Z')$. Therefore, each vertex in $\mathcal D_2$ has at least \[\dfrac{3}{4}|V(\mathcal C_1)\setminus\{u,v\}|+\dfrac{1}{2}-\left(\frac{1}{2}|V(\mathcal C_1)\setminus\{u,v\}|-\frac{13}{21}\right)=\frac{1}{4}|V(\mathcal C_1)\setminus\{u,v\}|+\frac{47}{42}\]
neighbors in $Z$. Because $n\geq 16$, we have $|\mathcal D_2|\geq 2m/7=2(n-2)/7\geq 4$, so there must exist distinct vertices $h_1$ and $h_2$ in $\mathcal D_2$ that have a common neighbor $z$ in $Z$. We once again use the fact that $\mathcal D_2$ is Wilsonian and that $\sigma^{-1}(\mathcal D_2)$ contains a spanning star with center $w_1'$. By Wilson's Theorem~\ref{thm:wilson}, there is a sequence $\Sigma$ of $(\sigma^{-1}(\mathcal D_2),\mathcal D_2)$-friendly swaps that transforms $\sigma\vert_{\sigma^{-1}(\mathcal D_2)}$ into a bijection that sends $y'$ to $h_1$ and sends $w_1'$ to $h_2$. Notice that $\Sigma$ does not involve any of the vertices $u,v,w_0,x_0,z$ since these vertices are not in $\mathcal D_2$. Therefore, we can view $\Sigma$ as a sequence of $(X,Y)$-friendly swaps that transforms $\sigma$ into a bijection $\mu$ satisfying \[\mu(u')=u, \quad \mu(v')=v,\quad\mu(x_0')=x_0,\quad\mu(w_0')=w_0,\quad\mu(y')=h_1,\quad\mu(w_1')=h_2.\]  We now check that applying the sequence \[\Sigma,zh_1,zh_2,zw_0,w_0x_0,w_0u,w_0v,w_0u,w_0x_0,zw_0,zh_2,zh_1,\rev(\Sigma)\] of $(X,Y)$-friendly swaps to $\sigma$ exchanges $u$ and $v$, which is our final contradiction. 
\end{proof}

\section{Bipartite Graphs with Large Minimum Degree}\label{Sec:MinDegreeBipartite}
For $r\geq2$, recall that $d_{r,r}$ is the smallest nonnegative integer such that any two edge-subgraphs $X$ and $Y$ of $K_{r,r}$ with minimum degrees at least $d_{r,r}$ must have a friends-and-strangers graph $\FS(X,Y)$ with exactly $2$ connected components. In this section, we prove Theorem~\ref{Thm:bipartitemindegree}, which almost determines the exact value of $d_{r,r}$.  

\subsection{Lower bound}

\begin{proposition}
For $r\geq 2$, we have $d_{r,r}\geq\left\lceil\dfrac{3r+1}{4}\right\rceil$. 
\end{proposition}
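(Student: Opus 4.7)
The plan is to construct explicit bipartite graphs $X, Y \subseteq K_{r,r}$ with $\delta(X),\delta(Y)\geq d := \lceil(3r+1)/4\rceil - 1$, together with an explicit bijection $\sigma : V(X) \to V(Y)$ that is an isolated vertex of $\FS(X,Y)$. Once this is in hand, Proposition~\ref{prop:bipartite} already forces $\FS(X,Y)$ to have at least two components, and the isolated vertex contributes a third, so $\FS(X,Y)$ has strictly more than two components and hence $d_{r,r} > d$. The key idea is to choose $\sigma$ so that it deliberately violates $X$'s bipartition inside $V(Y)$: many edges of $X$ will then be sent automatically to within-side pairs of $V(Y)$ (which are never in $E(Y)$), and the rest can be steered away from $E(Y)$ by tailoring $X$ and $Y$ to each other.

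Set $\alpha = \lceil r/2 \rceil$. Partition $A_X = A' \sqcup B'$ and $B_X = A'' \sqcup B''$ with $|A'| = |B''| = \alpha$ and $|B'| = |A''| = r-\alpha$; partition $Y$ analogously as $A_Y = P_A \sqcup Q_A$ and $B_Y = P_B \sqcup Q_B$ with $|P_A| = |Q_B| = \alpha$. Let $\sigma$ send $A' \to P_A$, $A'' \to Q_A$, $B' \to P_B$, $B'' \to Q_B$ via bijections to be specified. Thus $\sigma$ sends the set $A' \cup A'' \subseteq V(X)$ into $A_Y$ and the set $B' \cup B'' \subseteq V(X)$ into $B_Y$, ignoring which side of $X$'s bipartition a vertex is on.

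Now fix almost-regular bipartite graphs $M_Y \subseteq P_A \times Q_B$ and $M_Y' \subseteq Q_A \times P_B$ of side-degrees $k_1' \approx \alpha/2$ and $k_2' \approx (r-\alpha)/2$ respectively, and put $E(Y) = K_{P_A,P_B} \sqcup K_{Q_A,Q_B} \sqcup M_Y \sqcup M_Y'$. Define $M_X$ on $(A', B'')$ (and $M_X'$ on $(B', A'')$) to be the bipartite complement of $M_Y$ (and of $M_Y'$) transported back along $\sigma$, and put $E(X) = K_{A',A''} \sqcup K_{B',B''} \sqcup M_X \sqcup M_X'$. The side-degrees of $M_X$ are then $\alpha - k_1'$ and those of $M_X'$ are $(r-\alpha) - k_2'$; a straightforward degree count shows that $k_1', k_2'$ can be chosen (with appropriate floors and ceilings) so that the degree of every vertex in $X$ and every vertex in $Y$ is at least $d$, in each residue class of $r$ modulo $4$.

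Verifying that $\sigma$ is isolated is a case check on an edge $\{a,b\} \in E(X)$. If $\{a,b\}$ lies in $K_{A',A''}$ or $K_{B',B''}$, then $\sigma(a)$ and $\sigma(b)$ lie on the same side of $Y$'s bipartition and hence $\{\sigma(a),\sigma(b)\} \notin E(Y)$. If $\{a,b\} \in M_X$, then by construction $\{\sigma(a),\sigma(b)\}$ lies in $P_A \times Q_B$ but not in $M_Y$, and since $M_Y$ is the only part of $E(Y)$ meeting the rectangle $P_A \times Q_B$, the pair $\{\sigma(a),\sigma(b)\}$ is a non-edge of $Y$; the case $\{a,b\} \in M_X'$ is symmetric. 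The main obstacle is purely arithmetic: keeping track of the block size $\alpha$ and the $M_Y, M_Y'$ degrees so that $\delta(X) \geq d$ and $\delta(Y) \geq d$ simultaneously in all four residues of $r \bmod 4$. This is a routine case check and is the only place where the exact value $\lceil(3r+1)/4\rceil$ (rather than a weaker constant) is used.
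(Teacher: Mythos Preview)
Your construction is essentially identical to the paper's: both partition each partite set of $X$ and of $Y$ into two blocks of sizes $\lceil r/2\rceil$ and $\lfloor r/2\rfloor$, choose $\sigma$ to send two diagonal blocks of $X$ into one partite set of $Y$ and the other two into the other (so that the complete ``diagonal'' bipartite pieces of $X$ land entirely inside partite sets of $Y$), and then make the cross-block edge sets in $X$ and $Y$ bipartite complements of one another via $\sigma$. The paper likewise leaves the degree verification as casework on $r\bmod 4$ and deduces more than two components from the isolated vertex together with Proposition~\ref{prop:bipartite}, exactly as you do.
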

\begin{proof}
For each $r \geq 2$, we construct edge-subgraphs $X$ and $Y$ of $K_{r,r}$, each with minimum degree at least $\left\lceil\frac{3r+1}{4}\right\rceil-1$, such that $\FS(X,Y)$ has an isolated vertex; by Proposition~\ref{prop:bipartite}, this suffices to show that $\FS(X,Y)$ has more than $2$ connected components.  We will partition each partite set in each graph into two subsets.  Let the partite sets in the bipartition of $X$ be $A_X \cupdot B_X$ and $C_X \cupdot D_X$, where $A_X$ and $D_X$ each contain $\lceil r/2 \rceil$ vertices and $B_X$ and $C_X$ each contain $\lfloor r/2 \rfloor$ vertices; similarly, let the partite sets in the bipartition of $Y$ be $A_Y \cupdot C_Y$ and $B_Y \cupdot D_Y$, where $A_Y$ and $D_Y$ each contain $\lceil r/2 \rceil$ vertices and $B_Y$ and $C_Y$ each contain $\lfloor r/2 \rfloor$ vertices.  Fix a bijection $\sigma_0: V(X) \to V(Y)$ such that $\sigma_0(A_X)=A_Y$, $\sigma_0(B_X)=B_Y$, $\sigma_0(C_X)=C_Y$, and $\sigma_0(D_X)=D_Y$.

We now describe the edge set of $X$.  First, connect every vertex in $A_X$ to every vertex in $C_X$, and connect every vertex in $B_X$ to every vertex in $D_X$.  Second, add edges between $A_X$ and $D_X$ so that each vertex in $A_X$ has $\left\lfloor \frac{\lceil r/2 \rceil}{2} \right\rfloor$ neighbors in $D_X$ and each vertex in $D_X$ has $\left\lfloor \frac{\lceil r/2 \rceil}{2} \right\rfloor$ neighbors in $A_X$ (this is easy to achieve); similarly, add edges between $B_X$ and $C_X$ so that each vertex in $B_X$ has $\left\lfloor \frac{\lfloor r/2 \rfloor}{2} \right\rfloor$ neighbors in $C_X$ and each vertex in $C_X$ has $\left\lfloor \frac{\lfloor r/2 \rfloor}{2} \right\rfloor$ neighbors in $B_X$.  We now describe the edge set of $Y$ in a similar manner.  First, connect every vertex in $A_Y$ to every vertex in $B_Y$, and connect every vertex in $C_Y$ to every vertex in $D_Y$.  Second, connect $u \in A_Y$ to $v \in D_Y$ if and only if $\{\sigma_0^{-1}(u),\sigma_0^{-1}(v)\}$ is not an edge of $X$; similarly, connect $s \in B_Y$ to $t \in C_Y$ if and only if $\{\sigma_0^{-1}(s),\sigma_0^{-1}(t)\}$ is not an edge of $X$.  By construction, $\sigma_0$ is an isolated vertex in $\FS(X,Y)$.  It is straightforward to check (with casework on the residue of $r$ modulo $4$) that $X$ and $Y$ each have minimum degree at least $\left\lceil\frac{3r+1}{4}\right\rceil-1$.
\end{proof}

\subsection{Upper bound}

\begin{proposition}\label{propmindegreebipartite}
Let $X$ and $Y$ be edge-subgraphs of $K_{r,r}$ such that $\min\{\delta(X),\delta(Y)\}\geq\left\lceil\frac{3r+2}{4}\right\rceil$. Let $\sigma: V(X)\to V(Y)$ be a bijection. If $u,v\in V(Y)$ are in different partite sets of $Y$ and are such that $\{\sigma^{-1}(u),\sigma^{-1}(v)\}\in E(X)$, then $u$ and $v$ are $(X,Y)$-exchangeable from $\sigma$. 
\end{proposition}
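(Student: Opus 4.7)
The plan is to establish the exchangeability of $u$ and $v$ from $\sigma$ by case analysis on whether $\{u,v\}\in E(Y)$, constructing an explicit sequence of friendly swaps in the non-trivial case via auxiliary vertices whose existence is guaranteed by the minimum-degree hypothesis.

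Without loss of generality assume $u\in A_Y$, $v\in B_Y$, $u'\in A_X$, and $v'\in B_X$, where $\{A_X,B_X\}$ and $\{A_Y,B_Y\}$ denote the vertex bipartitions of $X$ and $Y$. Set $d=\lceil(3r+2)/4\rceil$. In the easy case $\{u,v\}\in E(Y)$, the single $(X,Y)$-friendly swap across $\{u',v'\}\in E(X)$ transforms $\sigma$ into $\sigma\circ(u'\,v')$, which immediately yields the exchangeability. The heart of the proof is therefore the case $\{u,v\}\notin E(Y)$.

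For this main case, I would exploit the fact that, by inclusion-exclusion, any two vertices lying in the same partite set of $X$ (respectively of $Y$) share at least $2d-r\geq(r+2)/2$ common neighbors. This density forces many $4$-cycles. I would then seek a small gadget--vertices such as $a'\in A_X\cap N_X(v')$ with $\sigma(a')\in A_Y\cap N_Y(v)$, together with $b'\in B_X\cap N_X(u')$ with $\sigma(b')\in B_Y\cap N_Y(u)$, and possibly one or two further auxiliary vertices--such that a prescribed sequence of $(X,Y)$-friendly swaps acts as a transposition of $u$ and $v$. A pigeonhole argument on the overlapping neighborhoods ensures that such a gadget exists whenever $\delta(X),\delta(Y)\geq d$. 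The explicit swap sequence implements a ``routing'': first move $v$ off of $v'$ onto the auxiliary position $a'$; then move $u$ from $u'$ to $v'$ through a path that does not pass over the value $v$; finally shuttle $v$ from $a'$ to the now-vacant $u'$, restoring the other auxiliary values along the way.

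The main obstacle is arranging for the routing sequence to avoid ever requiring a swap of the missing pair $\{u,v\}$ in $Y$. Straightforward $4$-cycle rotations (of which several can be tried symbolically) always force such a swap at some intermediate step, so I expect the argument to need a genuinely larger gadget--at minimum a $6$-vertex structure joining two $4$-cycles sharing the edge $\{u',v'\}$--and the tight constant $\lceil(3r+2)/4\rceil$ must be extracted carefully from the counting. A secondary difficulty is the parity behavior of $\sigma$ on the gadget: since each friendly swap flips the ``preserved vs.\ crossed'' parity type of both of its endpoints, and the target $\sigma\circ(u'\,v')$ has exactly $u'$ and $v'$ crossed relative to $\sigma$, the choreography must be designed so that the accumulated parity flips land precisely on $\{u',v'\}$. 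The worst sub-case, which I would treat last, is likely the one in which $\sigma$ preserves the bipartition at all gadget vertices, since then the routing must traverse several intermediate states where the parity pattern differs from both the starting and ending configurations.
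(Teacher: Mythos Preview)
Your outline captures the right flavor---find auxiliary vertices via degree counts and route $u,v$ through them with an explicit swap sequence---but it is missing a genuine structural step, and without it the pigeonhole argument you describe does not go through.

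The difficulty is that the gadget you want (e.g.\ $a'\in A_X\cap N_X(v')$ with $\sigma(a')\in A_Y\cap N_Y(v)$) requires a vertex whose $X$-location, $Y$-location, \emph{and} position under $\sigma$ are all controlled simultaneously. The degree hypothesis $\delta\ge\lceil(3r+2)/4\rceil$ controls neighborhoods in $X$ and in $Y$ separately, but it says nothing about how $\sigma$ aligns the two bipartitions: $|\sigma(A_X)\cap A_Y|$ can be as small as $1$ (only $u$ itself), in which case no $a'$ of the type you describe exists at all. More concretely, any counting of the form ``at most $r-\delta$ vertices fail this, at most $r-\delta$ fail that'' will also have to subtract a term of the form $|\sigma^{-1}(B_Y)\cap A_X|$, and that term is not bounded by the degree hypothesis.

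The paper handles this by first performing a sequence of \emph{preliminary} $(X,Y)$-friendly swaps, none involving $u$, $v$, or a fixed helper $w$, that gradually reduce both $|\tau(D)\cap A_Y|$ and $|B_Y\cap\tau(A_X\setminus D)|$ (here $D=A_X\setminus(N(v')\cap N(w'))$). Two short claims, each proved by a three-term count summing to at most $r$, guarantee that such a swap is always available as long as the relevant quantity is positive. One arrives at a normalized bijection $\mu$ satisfying $\mu(D)\subseteq B_Y$ and $|\mu^{-1}(B_Y)\cap A_X|\le 2r-2\delta$; only then does the four-term count $(2r-2\delta)+(r-\delta)+(r-\delta)<r-1$ produce the required gadget vertices $z_1,z_2\in N(v)\cap N(w)$ with $\mu^{-1}(z_i)\in N(v')\cap N(w')$. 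The final nine-swap sequence $vz_1,\,wz_2,\,wz_1,\,wu,\,wz_2,\,wz_1,\,vz_1,\,vz_2,\,wz_2$ then exchanges $u$ and $v$ without ever invoking the (possibly absent) edge $\{u,v\}$. Your ``worst sub-case'' intuition is correct in spirit, but the resolution is not a larger static gadget---it is this dynamic pre-processing of $\sigma$ into a bijection where the small gadget is guaranteed to exist.
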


Before proving this proposition, let us see how it implies the desired inequality $d_{r,r}\leq\left\lceil\frac{3r+2}{4}\right\rceil$. Suppose $X$ and $Y$ are edge-subgraphs of $K_{r,r}$ with $\min\{\delta(X),\delta(Y)\}\geq\left\lceil\frac{3r+2}{4}\right\rceil$. Proposition~\ref{propmindegreebipartite} tells us that the hypothesis of Proposition~\ref{lem:exchangeable} is satisfied with $\widetilde Y=K_{r,r}$, so it follows from that lemma that the number of connected components of $\FS(X,K_{r,r})$ is the same as the number of connected components of $\FS(X,Y)$. We also know that $\FS(X,K_{r,r})\cong\FS(K_{r,r},X)$. Because $K_{r,r}$ and $X$ are edge-subgraphs of $K_{r,r}$ with $\min\{\delta(K_{r,r}),\delta(X)\}\geq\left\lceil\frac{3r+2}{4}\right\rceil$, we can use Proposition~\ref{propmindegreebipartite} once again to see that the hypothesis of Proposition~\ref{lem:exchangeable} is satisfied with the pair $(K_{r,r},X)$ playing the role of $(X,Y)$ and with $\widetilde Y=K_{r,r}$. Therefore, the number of connected components of $\FS(K_{r,r},X)$, which is also the number of connected components of $\FS(X,Y)$, is equal to the number of connected components of $\FS(K_{r,r},K_{r,r})$. We know by Proposition~\ref{prop:complete-bipartite} that $\FS(K_{r,r},K_{r,r})$ has $2$ connected components. As $X$ and $Y$ were arbitrary, this proves that $d_{r,r}\leq\left\lceil\frac{3r+2}{4}\right\rceil$. 

\begin{proof}[Proof of Proposition~\ref{propmindegreebipartite}]
Let $\delta=\min\{\delta(X),\delta(Y)\}$; our hypothesis states that $\delta\geq\left\lceil\frac{3r+2}{4}\right\rceil$. Let $\{A_X,B_X\}$ and $\{A_Y, B_Y\}$ be the bipartitions of $X$ and $Y$, respectively. Without loss of generality, we may assume $u\in A_Y$ and $v\in B_Y$. Let $u'=\sigma^{-1}(u)$ and $v'=\sigma^{-1}(v)$. Our goal is to show that $\sigma$ and $\sigma\circ(u'\,\,v')$ are in the same connected component of $\FS(X,Y)$. We may assume that the partite set of $X$ containing $v'$ contains at least $r/2$ elements of $\sigma^{-1}(B_Y)$ since, otherwise, we can simply switch the roles of $\sigma$ and $\sigma\circ(u'\,\,v')$. Without loss of generality, we may assume $u'\in A_X$ and $v'\in B_X$. Thus, $|B_X\cap\sigma^{-1}(B_Y)|\geq r/2$.

Note that there are at most $r-\delta$ vertices $B_Y$ that are not adjacent to $u$, at most $r/2$ vertices in $B_Y$ that are not in $\sigma(B_X)$, and at most $r-\delta$ vertices in $B_X$ that are not adjacent to $u'$. Since $(r-\delta)+(r/2)+(r-\delta)=5r/2-2\delta<r$, there must exist $w\in N(u)\subseteq B_Y$ such that the vertex $w'=\sigma^{-1}(w)$ is adjacent to $u'$. Let $D=A_X\setminus(N(v')\cap N(w'))$. Note that $|D|\leq r-(2\delta-r)=2r-2\delta$.

Suppose we now consider an arbitrary bijection $\tau: V(X) \to V(Y)$. We have the following claims:

\noindent{\bf Claim 1:} If $x\in\tau(D)\cap A_Y$ and $|\tau^{-1}(A_Y)\cap A_X| > 2r-2\delta+1$, then there exists a vertex $y\in (B_Y\cap\tau(B_X))\setminus\{v,w\}$ such that $y$ is adjacent to $x$ and $\tau^{-1}(y)$ is adjacent to $\tau^{-1}(x)$. 

\noindent{\bf Claim 2:} If $q\in B_Y\cap \tau(A_X \setminus D)$ and $|\tau^{-1}(B_Y)\cap A_X| > 2r-2\delta$, then there exists $s\in A_Y\cap\tau(B_X)$ such that $s$ is adjacent to $q$ and $\tau^{-1}(s)$ is adjacent to $\tau^{-1}(q)$. 

To prove Claim 1, notice that there are \emph{strictly fewer} than $r-(2r-2\delta+1)=2\delta-r-1$ vertices in $A_Y\cap\tau(B_X)$, at most $r-\delta$ vertices in $B_X$ that are not adjacent to $\tau^{-1}(x)$, and at most $r-\delta$ vertices in $B_Y$ that are not adjacent to $x$. Since $(2\delta-r-1)+(r-\delta)+(r-\delta)=r-1$, it follows that there exist distinct vertices $y_1,y_2\in B_Y\cap\tau(B_X)$ such that $y_1,y_2\in N(x)$ and $\tau^{-1}(y_1),\tau^{-1}(y_2)\in N(\tau^{-1}(x))$. Because $\tau^{-1}(x)$ is in $D$ and is a common neighbor of $\tau^{-1}(y_1)$ and $\tau^{-1}(y_2)$, it follows from the definition of $D$ that at least one of the vertices $\tau^{-1}(y_1),\tau^{-1}(y_2)$ is not in $\{v',w'\}$. Without loss of generality, say $\tau^{-1}(y_1)\not\in\{v',w'\}$. Then $y=y_1$ is the desired vertex. 

The proof of Claim 2 is similar. There are strictly fewer than $r-(2r-2\delta)=2\delta-r$ vertices in $B_Y\cap\tau(B_X)$, at most $r-\delta$ vertices in $B_X$ that are not adjacent to $\tau^{-1}(q)$, and at most $r-\delta$ vertices in $A_Y$ that are not adjacent to $q$. Since $(2\delta-r)+(r-\delta)+(r-\delta)=r$, the desired vertex $s$ must exist. 

Now suppose $\tau$ is a bijection that can be obtained from $\sigma$ via a sequence of $(X,Y)$-friendly swaps that does not involve $u$, $v$, or $w$. This implies that $\tau(u')=\sigma(u')=u$, $\tau(v')=\sigma(v')=v$, and $\tau(w')=\sigma(w')=w$. Let us assume for the moment that at least one of the following is true: 
\begin{enumerate}[I.]
\item There exists $x\in\tau(D)\cap A_Y$.
\item We have $|\tau^{-1}(B_Y)\cap A_X|>2r-2\delta$.
\end{enumerate} 

Suppose we are in Case~I. If $|\tau^{-1}(A_Y)\cap A_X|>2r-2\delta+1$, then by Claim~1, there exists $y\in (B_Y\cap\tau(B_X))\setminus\{v,w\}$ such that we can apply the $(X,Y)$-friendly swap $xy$ to $\tau$. We call this a \emph{preliminary swap of the first kind}. This swap does not involve $u$, $v$, or $w$. To see this, note that $x$ is in $\tau(D)$, which is disjoint from $\{u,v,w\}$. Also, we chose $y$ so that it is not in $\{v,w\}$, and we must have $y\neq u$ since $y\in B_Y$ and $u\in A_Y$. If instead we have $|\tau^{-1}(A_Y)\cap A_X|\leq 2r-2\delta+1$, then $|\tau^{-1}(B_Y)\cap A_X|\geq r-(2r-2\delta+1)=2\delta-r-1>2r-2\delta$, which means that we are in Case~II. 

Suppose now that we are in Case~II. Since $|D|\leq 2r-2\delta$, there is at least one vertex $q\in B_Y\cap \tau(A_X \setminus D)$. By Claim 2, there is a vertex $s\in A_Y\cap\tau(B_X)$ such that we can apply the $(X,Y)$-friendly swap $qs$ to $\tau$. We call this a \emph{preliminary swap of the second kind}. This swap does not involve $u$, $v$, or $w$. Indeed, $q$ is in $B_Y\cap\tau(A_X)$, which is disjoint from $\{u,v,w\}$, and $s$ is in $A_Y\cap\tau(B_X)$, which is also disjoint from $\{u,v,w\}$. 

If $\tau'$ is obtained by applying a preliminary swap of the first kind to $\tau$, then $|\tau'(D)\cap A_Y|<|\tau(D)\cap A_Y|$ and $|B_Y\cap\tau'(A_X\setminus D)|\leq |B_Y\cap\tau(A_X\setminus D)|$. If $\tau'$ is obtained by applying a preliminary swap of the second kind to $\tau$, then $|B_Y\cap\tau'(A_X\setminus D)|<|B_Y\cap\tau(A_X\setminus D)|$ and $|\tau'(D)\cap A_Y|\leq|\tau(D)\cap A_Y|$.  Therefore, if we repeatedly apply these preliminary swaps, we eventually obtain a bijection $\mu: V(X) \to V(Y)$ such that 
\begin{itemize}
\item $\mu$ can be obtained from $\sigma$ by performing a sequence of swaps not involving $u$, $v$, or $w$;
\item $\mu(u')=u$, $\mu(v')=v$, and $\mu(w')=w$; 
\item $\mu(D)\subseteq B_Y$ (i.e., $\mu(D)\cap A_Y=\emptyset$); 
\item $B_Y\cap\mu(A_X\setminus D)=\emptyset$.
\end{itemize}
Notice that the condition $B_Y\cap\mu(A_X\setminus D)=\emptyset$ forces $|\mu^{-1}(B_Y) \cap A_X|\leq |D|\leq 2r-2\delta$. 

Let $\Sigma$ be a sequence of $(X,Y)$-friendly swaps not involving $u$, $v$, or $w$ that transforms $\sigma$ into $\mu$. We will demonstrate that there is a sequence $\widetilde \Sigma$ of $(X,Y)$-friendly swaps such that applying $\widetilde\Sigma$ to $\mu$ exchanges $u$ and $v$. It will then follow that $\Sigma^*=\Sigma,\widetilde\Sigma,\rev(\Sigma)$ is a sequence of $(X,Y)$-friendly swaps such that applying $\Sigma^*$ to $\sigma$ exchanges $u$ and $v$ (i.e., $\Sigma^*$ transforms $\sigma$ into $\sigma\circ(u'\,\,v')$); this will complete the proof. 

\begin{figure}[ht]
\begin{center}
\includegraphics[height=2.7cm]{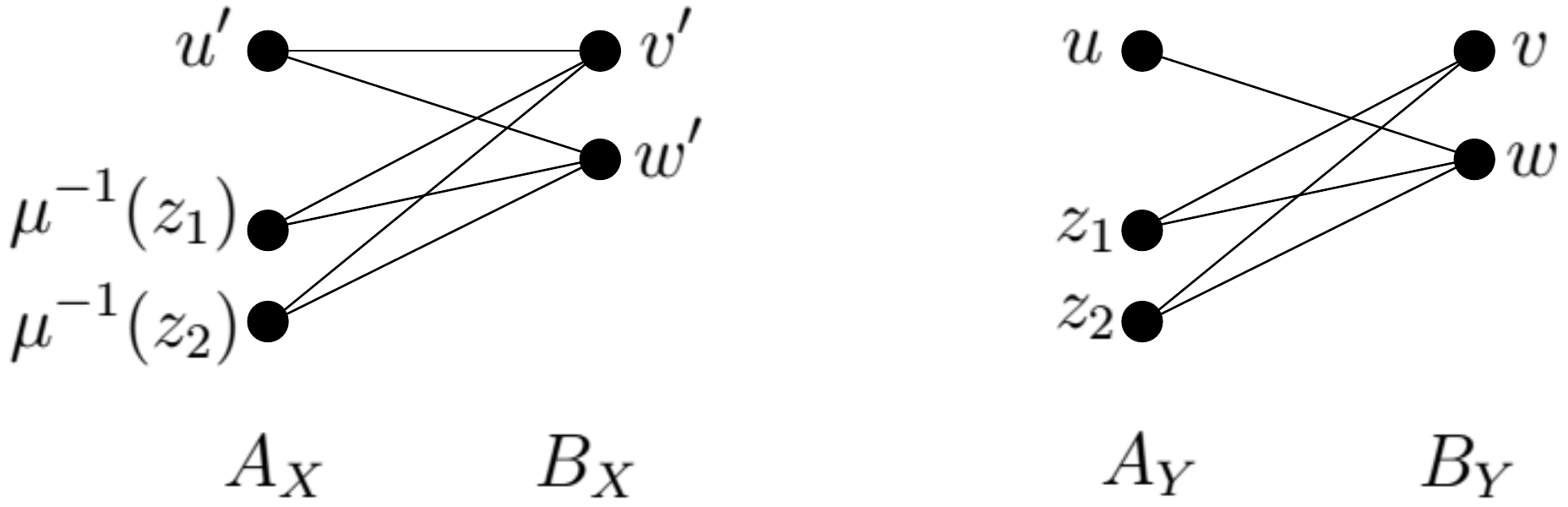}
\caption{The special vertices and edges used in the proof of Proposition~\ref{propmindegreebipartite}.}
\label{Fig2}
\end{center}  
\end{figure}

We wish to choose distinct $z_1,z_2\in N(v)\cap N(w)$ such that $\mu^{-1}(z_1)$ and $\mu^{-1}(z_2)$ are both in $A_X$. To see that this is possible, note that there are at most $2r-2\delta$ vertices in $\mu^{-1}(B_Y)\cap A_X$, at most $r-\delta$ vertices in $A_Y$ that are not adjacent to $v$, and at most $r-\delta$ vertices in $A_Y$ that are not adjacent to $w$. Because $(2r-2\delta)+(r-\delta)+(r-\delta)=4r-4\delta < r-1$, the desired vertices $z_1,z_2$ exist. Now recall that $\mu(D)\subseteq B_Y$ so that $\mu^{-1}(z_1)$ and $\mu^{-1}(z_2)$ are both in $A_X \setminus D$. By the definition of $D$, the vertices $\mu^{-1}(z_1)$ and $\mu^{-1}(z_2)$ are both adjacent to $v'$ and $w'$. See Figure~\ref{Fig2} for an illustration of the graphs $X$ and $Y$ along with the special vertices that we have considered up to this point. We now readily check that applying the sequence 
\[\widetilde\Sigma=vz_1, wz_2, wz_1, wu, wz_2, wz_1, vz_1, vz_2, wz_2\] to $\mu$ exchanges $u$ and $v$.
\end{proof}

\section{Future Work}\label{Sec:Conclusion}

Of course, it would be desirable to improve the estimates obtained in our main theorems. Along these lines, we have the following conjectures. In Conjectures~\ref{Conj3} and \ref{Conj4}, we preserve the definitions of $d_n$ and $d_{r,r}$ from Theorems~\ref{Thm:mindegree} and \ref{Thm:bipartitemindegree}, respectively.

\begin{conjecture}
There exists an absolute constant $C>0$ such that if $p\geq Cn^{-1/2}$ and $X$ and $Y$ are independently-chosen random graphs in $\mathcal G(n,p)$, then $\FS(X,Y)$ is connected with high probability.
\end{conjecture}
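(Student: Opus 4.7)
The plan is to adapt the approach of Section~\ref{Sec:random-upper}, but with a constant-size exchange gadget in place of the $m = (\log n)^{2/3}$ construction. Specifically, one would seek a pair of graphs $(G^*, H^*)$ on $m = O(1)$ vertices (in the spirit of the four bipartite gadgets in Figure~\ref{Fig4}) such that two distinguished vertices are $(G^*, H^*)$-exchangeable from the identity. Wilson's Theorem~\ref{thm:wilson} is fully available in the non-bipartite setting, so in principle one can design such a gadget using small Wilsonian biconnected subgraphs of $G^*$ and $H^*$, together with a Wilsonian-style maneuvering of the distinguished vertices into position for a triangle swap, exactly as in Lemma~\ref{Lem2}.

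Once the gadget is fixed, one must embed it into $(X,Y)$ around every relevant triple $(u, v, \sigma)$ and then invoke Lemma~\ref{lem:exchangeable-K_n}. Here a genuine obstacle appears: Lemma~\ref{Lem:Embeddable} requires $p^{\beta(J)} \prod_{j \in J} q_j \geq 3 \cdot 2^{m+1} Q \log n$ for every nonempty $J \subseteq [m]$. With $p = Cn^{-1/2}$, constant $m$, and $q_i$'s of order $n$, the left side is $\Theta(n^{|J| - \beta(J)/2})$, and beating the right-hand $n \log n$ requires the strict sparsity condition $|E(G^*\vert_J)| + |E(H^*\vert_J)| < 2(|J|-1)$ for every $J \subseteq [m]$ with $|J| \geq 2$. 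In particular, for $|J|=2$ no edge may be shared between $G^*$ and $H^*$, and the combined gadget must contain fewer than $2(m-1)$ edges overall. This rules out the most natural Wilsonian designs, in which $G^*$ and $H^*$ each have a biconnected (hence at least $m$-edge) restriction to $[m]$.

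The hardest step, then, is simultaneously meeting the combinatorial demands of Wilson's theorem (enough structure to enable Wilsonian exchange) and the analytic demands of Janson (sparse enough to embed at threshold $p = C/\sqrt n$). One promising route is two-round sprinkling: expose $X_1, Y_1 \sim \mathcal{G}(n, p/2)$ in a first round and use their pseudorandomness (concentration of neighborhoods and their pairwise intersections, equidistribution under any bijection $\sigma$) to handle the Wilsonian subgadget, then use the independent edges of $X_2, Y_2 \sim \mathcal{G}(n, p/2)$ to complete the full gadget with a failure probability so small that a union bound over the $O(n^2)$ pairs $(u, v)$ succeeds without logarithmic loss. An alternative is a second-moment or container-style analysis of the global ``bad'' event that some triple $(u, v, \sigma)$ admits no gadget embedding, exploiting strong correlations among bad configurations to avoid the union-bound $\log n$ penalty. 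Either refinement would require genuinely new ideas beyond the framework of Section~\ref{Sec:random-upper}, but once an appropriate embedding result at the tight threshold $p = C/\sqrt n$ is in hand, connectedness of $\FS(X,Y)$ follows immediately from Lemma~\ref{lem:exchangeable-K_n}.
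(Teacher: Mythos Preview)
The statement you are addressing is a \emph{conjecture} in the paper (Conjecture~7.1 in Section~\ref{Sec:Conclusion}), not a theorem; the paper offers no proof, and indeed the whole point of listing it under ``Future Work'' is that the authors do not know how to close the $n^{o(1)}$ gap between the two halves of Theorem~\ref{thm:random}. So there is nothing to compare your proposal against.

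Your proposal is not a proof either, and you are candid about this: you correctly identify that the embedding machinery of Lemma~\ref{Lem:Embeddable} cannot reach $p = Cn^{-1/2}$ with a constant-size gadget because the Janson condition forces $\beta(J) < 2(|J|-1)$ for every $J$, which is incompatible with the Wilsonian structure needed for exchangeability. Your diagnosis of the obstruction is sound, and the two directions you sketch (sprinkling to decouple the structural and embedding requirements, or a sharper global analysis avoiding the union-bound $\log n$) are reasonable heuristics, but neither is carried out. In short, you have written a plausible research outline for attacking an open problem, not a proof; the paper is in exactly the same position.
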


\begin{conjecture}
There exists an absolute constant $C>0$ such that if $p\geq Cr^{-1/2}$ and $X$ and $Y$ are independently-chosen random graphs in $\mathcal G(K_{r,r},p)$, then $\FS(X,Y)$ has exactly $2$ connected components with high probability.
\end{conjecture}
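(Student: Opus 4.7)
The plan is to combine a two-round (``sprinkling'') exposure with an enriched family of exchange gadgets that together hit the conjectured constant threshold. Write $1-p = (1-p_1)(1-p_2)$, where $p_1 = (1+\eta)r^{-1/2}$ is chosen slightly above the Sauer--Spencer threshold that ensures $\FS(X,Y)$ has no isolated vertex, and $p_2 = \eta\, r^{-1/2}$ is a small sprinkling parameter, with $\eta$ a small constant depending on the target $C$. Decompose $X = X_1 \cup X_2$ and $Y = Y_1 \cup Y_2$ with $X_i, Y_i \in \mathcal{G}(K_{r,r}, p_i)$ independently. The goal, in view of Proposition~\ref{lem:exchangeable}, Proposition~\ref{prop:complete-bipartite}, and the symmetry $\FS(X,Y) \cong \FS(Y,X)$, is to prove a strong exchangeability statement in the spirit of Proposition~\ref{prop:bipartite-random} all the way down to $p = Cr^{-1/2}$.

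First I would establish the following pseudo-random properties of $(X_1, Y_1)$ with high probability: every vertex has degree $(1 \pm o(1))p_1 r = \Theta(r^{1/2})$; every pair of vertices in the same partite set has common-neighbor count concentrated near its mean $p_1^2 r = \Theta(1)$, so for any fixed $k$ a positive constant fraction of pairs have at least $k$ common neighbors; and for any subset $S$ of a partite set with $|S| \le \delta r$, the number of $X_1$- or $Y_1$-edges incident to $S$ is at most $(1+o(1))|S| p_1 r$. These follow from standard Chernoff plus union-bound estimates.

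Next, using these pseudo-random properties together with the sprinkled edges $(X_2, Y_2)$, I would prove the following: for any bijection $\sigma: V(X) \to V(Y)$ and any pair $(u,v)$ in different partite sets of $Y$ with $\{\sigma^{-1}(u), \sigma^{-1}(v)\} \in E(X)$, the vertices $u$ and $v$ are $(X,Y)$-exchangeable from $\sigma$. The key departure from the $m = 8$ gadgets used in Figure~\ref{Fig4} is to allow the gadget size $m = m(r)$ to grow slowly (for instance as $m = \Theta(\log r / \log\log r)$), while insisting that the edge-to-vertex ratio of the gadget approaches $1$ from above. For such ``near-forest'' gadgets, the quantity $p^{\beta(J)}\prod_j q_j$ in Lemma~\ref{lem:bipartite-embeddability} stays polynomially large in $r$ even at $p = \Theta(r^{-1/2})$, because each additional vertex contributes a factor of $pr = \Theta(r^{1/2})$ while each gadget edge only costs a factor of $p = \Theta(r^{-1/2})$. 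The sprinkled graph $(X_2, Y_2)$ is used to supply the few ``closing'' edges that turn a tree-like skeleton into an actual exchanger.

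The main obstacle is precisely the last point: in the bipartite setting, Wilson's Theorem~\ref{thm:wilson} does not apply, and the bipartite analogue in~\cite{wilson} is not flexible enough to be used as a black box on large sparse gadgets. One must therefore produce, and verify, a family of near-forest bipartite graph pairs $(G^\star, H^\star)$ that actually implement a swap of two distinguished vertices --- not by algebraic magic but by an explicit sequence of friendly swaps. I expect this to require a dedicated structural lemma characterizing when $\FS(G, H)$ is ``as connected as the parity obstruction allows'' for bipartite $G, H$ of controlled density, perhaps proved by induction on $m$ using ``ear-like'' decompositions of the gadget. A secondary difficulty is the union bound over the roughly $(r!)^2$ triples $(\sigma, u, v)$: the failure probability for a fixed triple must be subexponentially small in $r$, which forces the gadget to have many essentially disjoint embedding sites. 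This may require iterating the sprinkling into three or more rounds and proving a concentration statement for the number of embedded gadgets. Overcoming these two difficulties in tandem is, to my mind, where the real work of the conjecture lies.
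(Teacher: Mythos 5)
This statement is Conjecture 7.2 in the paper; it is posed as an open problem, and the paper itself provides no proof of it, so there is no proof to compare against. What the paper \emph{does} prove is Theorem~\ref{thm:randombipartite}, which reaches only $p \geq 5(\log r)^{1/10}/r^{3/10}$, and the authors explicitly attribute this gap to the absence of a sufficiently robust bipartite analogue of Wilson's Theorem~\ref{thm:wilson}.

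Your proposal is an attack plan rather than a proof, and you are admirably honest that ``the real work of the conjecture lies'' in the two difficulties you name. Unfortunately those difficulties are exactly the ones the paper already identifies as unresolved, and the proposal does not overcome them; it restates them. Concretely, the ``dedicated structural lemma characterizing when $\FS(G,H)$ is as connected as the parity obstruction allows'' for sparse bipartite $G,H$ is the missing ingredient, and no candidate statement or proof strategy for it is supplied. Worse, the quantitative intuition behind the near-forest gadgets is off. In the non-bipartite proof, the gain down to $p=n^{-1/2+o(1)}$ does \emph{not} come from making the gadget near-acyclic (the paper's $G^{**}$ has a long cycle plus several chords and $H^{**}$ is a spanning star, so $\beta([m])\approx 2m$); it comes from making only $|\Gamma|=\Theta(\sqrt m)$ of the coordinates have the small size $q_i=\Theta(pn/\ell)$, while the remaining $q_i=\Theta(n/m)$ are much larger, which is what saves the calculation in Lemma~\ref{Lem1}. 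Your heuristic ``each additional vertex contributes $pr$ while each gadget edge costs $p$'' assumes all $q_i=\Theta(pr)$, in which case $p^{\beta}\prod q_i=\Theta(r^{(m-\beta)/2})$ at $p=\Theta(r^{-1/2})$, and this forces $\beta<m-1$, which is incompatible with even one of the gadget graphs being connected. So either you must replicate the paper's asymmetric-$q_i$ device in the bipartite setting (which again requires a black box that works on a spanning-star-versus-biconnected pair, i.e.\ precisely the unavailable bipartite Wilson), or you must redesign the gadgets around a genuinely different exchange mechanism.

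Finally, the worry about a union bound over $(r!)^2$ triples $(\sigma,u,v)$ is mostly a red herring: $(q_1,\ldots,q_m)$-bipartite-embeddability is a single deterministic property of $(X,Y)$ that is universally quantified over all admissible lists and all bijections, so once it is established with high probability one reads off exchangeability for every $(\sigma,u,v)$ without any further union bound, exactly as in the proof of Proposition~\ref{prop:bipartite-random}. The genuine bottleneck is the gadget, not the union bound, and until a workable family of bipartite gadgets (with explicit or recursively verified swap sequences) is constructed, this remains a conjecture.
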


\begin{conjecture}\label{Conj3}
We have $\displaystyle d_n=\dfrac{3}{5}n+O(1)$.
\end{conjecture}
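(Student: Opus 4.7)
The plan is to prove the matching upper bound $d_n\leq \tfrac{3n}{5}+O(1)$ by refining the exchange/Wilsonian machinery of Section~\ref{Sec:MinDegree} and combining it with a stability analysis of the pentagonal lower-bound construction. Throughout, fix a large constant $C$ (to be chosen at the very end) and assume $\delta(X),\delta(Y)\geq \tfrac{3n}{5}+C$. By Lemma~\ref{lem:exchangeable-K_n} it suffices to show, for every bijection $\sigma:V(X)\to V(Y)$ and every pair $u,v\in V(Y)$ with $\{\sigma^{-1}(u),\sigma^{-1}(v)\}\in E(X)$, that $u$ and $v$ are $(X,Y)$-exchangeable from $\sigma$. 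Writing $u'=\sigma^{-1}(u)$ and $v'=\sigma^{-1}(v)$, set $A'=N_X(u')\cap N_X(v')$ and $B=N_Y(u)\cap N_Y(v)$; inclusion-exclusion gives $|A'|,|B|\geq 2\delta-n\geq \tfrac{n}{5}+2C$.

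First, I would rerun the four-case analysis of the proof of Lemma~\ref{Lem:9/14} verbatim. This already resolves the exchange whenever $\sigma^{-1}(B)\cap A'\neq\emptyset$ (via a direct $wu,wv,wu$ triangle swap) and whenever the iterated common-neighborhood argument produces vertex-disjoint Wilsonian subgraphs. What is left over is a \emph{rigid case}: $A'$ and $\sigma^{-1}(B)$ are essentially disjoint, every attempted Wilsonian subgraph fails to have the required internal minimum degree, and the auxiliary path-of-common-neighbors construction collapses. The next step is to show that this rigid case forces strong global structure.

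The heart of the plan is a stability lemma: in the rigid case, the triple $(X,Y,\sigma)$ must be $O_C(1)$-close, in edit distance, to an instance of the pentagonal construction from the lower bound. Concretely, I would extract a partition $V(Y)=A_1\sqcup\cdots\sqcup A_5$ (and the corresponding partition of $V(X)$ obtained via $\sigma^{-1}$) such that all but $O_C(1)$ edges of $Y$ respect the ``$\pm 1\bmod 5$ forbidden'' pattern and all but $O_C(1)$ edges of $X$ respect the ``$\pm 2\bmod 5$ forbidden'' pattern. The basic idea is that the obstructions to the various Lemma~\ref{Lem:9/14} exchange attempts provide, for each candidate $w$, a forbidden coincidence between $w$'s $Y$-neighborhood type and $\sigma^{-1}(w)$'s $X$-neighborhood type; enumerating these coincidences and closing under the natural equivalence relation on vertices should recover the five classes, in the spirit of Andrásfai-type stability arguments. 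Once the almost-pentagonal structure is in hand, the hypothesis $\delta\geq\tfrac{3n}{5}+C$ (with $C$ larger than the structural error bound) forces the existence of at least one ``excess'' edge of $X$ or $Y$ crossing a forbidden residue class; this edge serves as the seed of a short, customized exchange sequence that escapes the pentagonal rigidity, after which the near-complete bipartite/within-class connectivity (each $A_i$ together with its allowed neighbors supplies ample Wilsonian room) carries the exchange through by Wilson's Theorem~\ref{thm:wilson}.

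The main obstacle is the stability step. Unlike classical Tur\'an-type stability, the extremal object here is a \emph{correlated pair} $(X,Y)$ linked by a bijection, so one must recover both the $5$-partition \emph{and} its alignment with $\sigma$ from purely local failure data at a single triple $(\sigma,u,v)$. Propagating such a local obstruction to a global partition while keeping the error $O(1)$ rather than $o(n)$ is delicate and probably requires a careful interplay between a regularity-style cleaning argument and the explicit Wilsonian moves of Lemma~\ref{Lem:9/14}. A secondary difficulty is that the ``excess-edge'' step will require nontrivial casework depending on which pair of residue classes the excess edge crosses and whether it lies in $X$ or $Y$, since each such crossing demands a different exchange sequence; getting clean uniform bounds on the length of these sequences is what pins down the additive constant $O(1)$.
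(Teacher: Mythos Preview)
The statement you are addressing is Conjecture~\ref{Conj3}, which the paper explicitly leaves open; the paper establishes only $3n/5-2\le d_n\le 9n/14+2$ (Theorem~\ref{Thm:mindegree}) and poses $d_n=3n/5+O(1)$ as a problem for future work. There is therefore no proof in the paper to compare against, and what you have written is not a proof either but a programme.

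The genuine gap is the stability step, and you correctly flag it yourself. Rerunning Lemma~\ref{Lem:9/14} with $c=3/5$ does not simply leave a ``rigid case'': the numerical inequalities in Case~1 of that proof (the verification that $H\vert_Z$ is Wilsonian via $cm-|C|+\tfrac12|Z|\ge 0$) fail outright when $c$ drops below $9/14$, so the case split does not survive intact and there is no clean dichotomy between ``exchange succeeds'' and ``pentagonal structure emerges''. More seriously, your proposed stability lemma asserts that failure of exchangeability at a \emph{single} pair $(u,v)$ forces both $X$ and $Y$ to be $O(1)$-close to the pentagon blow-up globally. Even in classical stability theory one typically recovers structure only up to $o(n)$ error from near-extremality of a global parameter; here you want $O(1)$ error from a purely local obstruction, and you have not indicated any mechanism for propagating the partition beyond the neighbourhoods of $u,v,w,x$. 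The ``closing under the natural equivalence relation'' step is where the argument would have to do real work, and nothing in the proposal suggests how to control the error accumulation or why the relation should have exactly five classes rather than, say, a different number with smaller parts. Until that step is made precise, this remains a heuristic outline rather than a proof.
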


\begin{conjecture}\label{Conj4}
We have $\displaystyle d_{r,r}=\left\lceil\frac{3r+1}{4}\right\rceil$.
\end{conjecture}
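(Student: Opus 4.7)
Theorem~\ref{Thm:bipartitemindegree} establishes $\lceil(3r+1)/4\rceil \le d_{r,r}\le\lceil(3r+2)/4\rceil$, and the two bounds coincide except when $r\equiv 1\pmod 4$. The conjecture is therefore equivalent to the statement that when $r=4k+1$, any edge-subgraphs $X$ and $Y$ of $K_{r,r}$ with minimum degrees at least $3k+1$ have a friends-and-strangers graph $\FS(X,Y)$ with exactly two connected components. The plan is to close this one-unit gap by refining the proof of Proposition~\ref{propmindegreebipartite}.

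Running that proof with $\delta=3k+1$, every step goes through except the final one. There the inequality $(2r-2\delta)+(r-\delta)+(r-\delta)=4r-4\delta$ becomes exactly $r-1$ rather than strictly less than $r-1$, so one produces only a single vertex $z\in A_Y\cap N(v)\cap N(w)$ with $\mu^{-1}(z)\in A_X$, rather than the two vertices $z_1,z_2$ required by the nine-step swap sequence $\widetilde\Sigma$. This is the sole point where the extra unit of minimum degree was being spent. In a tight failure, the chain of inequalities must saturate simultaneously: one has $|\mu^{-1}(B_Y)\cap A_X|=2(r-\delta)$, $|A_Y\cap N(v)\cap N(w)|=2\delta-r=(r+1)/2$, and the minimum degrees of $u$, $u'$, $v$, $v'$, $w$, $w'$ are all attained exactly, which is extremely rigid.

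My plan proceeds in two phases. In phase one, rather than fixing a single $w$ at the outset, I would consider the entire set $W$ of valid candidates $w\in N(u)\cap B_Y$ with $\sigma^{-1}(w)\in N(u')\cap B_X$ and double-count incidences between $W$ and $A_Y\cap N(v)\cap\mu(A_X)$. Since every vertex $z\in A_Y$ has $|N(z)\cap W|$ close to $|W|$ by the minimum-degree hypothesis on $Y$, a pigeonhole argument should produce some $w$ admitting two good $z$'s, at which point the original swap sequence applies verbatim. For the residual extremal configurations where phase one fails, phase two employs a modified exchange using only one $z$ together with two vertices $w_1,w_2\in N(u)\cap B_Y$ with $\sigma^{-1}(w_1),\sigma^{-1}(w_2)\in N(u')\cap B_X$ and a common neighbor $z$ of $v,w_1,w_2$ in $Y$ whose preimage is a common neighbor of $v',w_1',w_2'$ in $X$. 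An explicit $(X,Y)$-friendly swap sequence exchanging $u$ and $v$ inside this richer substructure, found by a computer search in the spirit of Lemma~\ref{lem:bipartite-exchangeable}, would then finish the argument.

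The main obstacle is that the extremal configurations obstructing phase one may simultaneously obstruct the existence of two valid $w_i$'s with the requisite incidences for phase two, since both phases draw on the same finite degree budget. Overcoming this will require showing that the two obstructions are structurally incompatible: if $W$ genuinely contains only one valid $w$ and it yields only one good $z$, then the resulting rigidity of $X$ and $Y$ must force enough additional structure elsewhere (for example, large common neighborhoods between $u'$ and other vertices of $B_X$, forced by the saturated degrees) to furnish an alternative workable substructure. Carrying out this delicate interplay uniformly in $r$, and verifying that no single extremal pair $(X,Y)$ simultaneously defeats both phases, is the real difficulty of the conjecture.
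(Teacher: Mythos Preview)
The statement you are attempting is Conjecture~\ref{Conj4}, which the paper explicitly lists as open in Section~\ref{Sec:Conclusion}; there is no proof in the paper to compare against. Your submission is not a proof but a research plan, and you say so yourself in the final paragraph: the ``delicate interplay'' needed to rule out the extremal configurations is identified as ``the real difficulty of the conjecture'' and is left unresolved. Both phases of your plan rest on steps you have not carried out---a double-counting argument whose details are not given, and a computer search that has not been performed---so at present there is nothing that could be checked.

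Your diagnostic of \emph{where} the proof of Proposition~\ref{propmindegreebipartite} breaks when $\delta=3k+1$ and $r=4k+1$ is also incomplete. You assert that ``every step goes through except the final one,'' but this is not so. The transition from Case~I to Case~II in that proof relies on the strict inequality $2\delta-r-1>2r-2\delta$, equivalently $4\delta>3r+1$; at $\delta=3k+1$, $r=4k+1$ this becomes the equality $12k+4=12k+4$. Concretely, one can be stuck at a bijection $\tau$ with $|\tau^{-1}(A_Y)\cap A_X|=2k+1$ and $|\tau^{-1}(B_Y)\cap A_X|=2k$ while some $x\in\tau(D)\cap A_Y$ persists, so that neither Claim~1's hypothesis nor the Case~II condition holds and no preliminary swap is available. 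Thus the argument degrades in (at least) two places, not one, and any repair must address the preliminary-swap machinery as well as the final choice of $z_1,z_2$. Until these issues are handled, the proposal remains a sketch rather than a proof.
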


We mention that it is also possible to study non-symmetric versions of the main questions that we have investigated in this paper.  One possible extension of the ``typical'' problem is to gain information about the pairs of probabilities $(p_1(n),p_2(n))$ such that $\FS(X,Y)$ is connected with high probability when $X$ and $Y$ are drawn from $\mathcal{G}(n,p_1)$ and $\mathcal{G}(n,p_2)$, respectively.  Similarly, for the ``extremal'' problem, one could ask about the pairs $(\delta_1(n), \delta_2(n))$ that guarantee the connectedness of $\FS(X,Y)$ whenever $X$ and $Y$ are $n$-vertex graphs with minimum degrees at least $\delta_1$ and $\delta_2$, respectively.  The bipartite analogues of both of these questions could also be interesting.

It may also be fruitful to study a more fine-grained ``hitting-time'' version of the result in Theorem~\ref{thm:random}, which says that the threshold for $\FS(X,Y)$ becoming connected with high probability is roughly the same as the threshold for $\FS(X,Y)$ having no isolated vertices with high probability.  To be precise, fix a positive integer $n$, and let $\{(X_t,Y_t)\}_{0 \leq t \leq \binom{n}{2}}$ be a random sequence of pairs of $n$-vertex graphs, where $X_0$ and $Y_0$ have no edges and each $X_t$ (respectively, $Y_t$) is obtained from $X_{t-1}$ (respectively, $Y_{t-1}$) by independently at random adding an edge that is not already in $X_t$ (respectively, $Y_t$).  Note that $X_{\binom{n}{2}}=Y_{\binom{n}{2}}=K_n$, so $\FS\left(X_{\binom{n}{2}},Y_{\binom{n}{2}}\right)$ is certainly connected.  Let $t_{\text{iso}}$ be the smallest value of $t$ for which $\FS(X_t,Y_t)$ has no isolated vertices, and let $t_{\text{conn}}$ be the smallest value of $t$ for which $\FS(X_t,Y_t)$ is connected.  It is obvious that $t_{\text{conn}} \geq {t_\text{iso}}$; Theorem~\ref{thm:random} and Proposition~\ref{Prop:NoIsolated} show that $t_{\text{conn}} \leq {t_\text{iso}} \cdot n^{o(1)}$ with high probability.  We might ask if $t_{\text{iso}}$ and $t_{\text{conn}}$ are more closely related.

\begin{question}\label{ques:hitting-time}
Is it true that $t_{\operatorname{conn}}=t_{\operatorname{iso}}$ with high probability?  If not, is it at least true that $t_{\operatorname{conn}}=t_{\operatorname{iso}} (1+o(1))$ with high probability, or that $t_{\operatorname{conn}}=O(t_{\operatorname{iso}})$ with high probability?
\end{question}

In a similar direction, suppose again that $X$ and $Y$ are chosen independently from $\mathcal{G}(n,p)$. One might investigate how the expected number and sizes of the connected components of $\FS(X,Y)$ change as $p$ varies.   For instance, is there a phenomenon akin to the giant component phenomenon for random graphs?

We know from Proposition~\ref{prop:complete-bipartite} that $\FS(K_{r,r},K_{r,r})$ has exactly $2$ connected components; it would be interesting to understand which edge-subgraphs $X$ of $K_{r,r}$ are such that $\FS(X,K_{r,r})$ has exactly $2$ connected components.  (Recall that in the non-bipartite case, $\FS(X, K_n)$ is connected if and only if $X$ is connected.) 

\begin{question}
Let $X$ be a random graph in $\mathcal G(K_{r,r,},p)$. Under what conditions on $p$ will $\FS(X,K_{r,r})$ be disconnected with high probability? Under what conditions on $p$ will $\FS(X,K_{r,r})$ be connected with high probability? 
\end{question}

\begin{problem}
Define $d_{r,r}^*$ to be the smallest nonnegative integer such that for every edge-subgraph $X$ of $K_{r,r}$ with $\delta(X)\geq d_{r,r}^*$, the graph $\FS(X,K_{r,r})$ has exactly $2$ connected components. Obtain estimates for $d_{r,r}^*$. 
\end{problem}

We have focused on the number of connected components of the friends-and-strangers graphs $\FS(X,Y)$, but one could also consider other graph parameters. Most notably, it would be very interesting to have nontrivial results concerning the diameters of these graphs. 
\begin{question}\label{Conj5}
Does there exist an absolute constant $C>0$ such that for all $n$-vertex graphs $X$ and $Y$, every connected component of $\FS(X,Y)$ has diameter at most $n^{C}$? 
\end{question}

\begin{problem}
Obtain estimates (in terms of $n$ and $p$) for the expected value of the maximum diameter of a connected component of $\FS(X,Y)$ when $X$ and $Y$ are independently-chosen random graphs in $\mathcal G(n,p)$. 
\end{problem}

Finally, we mention that it could be fruitful to study random walks on friends-and-strangers graphs; indeed, this corresponds to friends and strangers randomly walking on graphs. Random walks on $\FS(X,K_n)$ correspond to the interchange process on $X$ as discussed, for example, in \cite{aldous}. 

\section*{Acknowledgments}
The first author is supported in part by NSF grant DMS--1855464, BSF grant 2018267, and the Simons Foundation.  The second author is supported by an NSF Graduate Research Fellowship (grant DGE--1656466) and a Fannie and John Hertz Foundation Fellowship.  We are grateful to Kiril Bangachev for pointing out an error in an earlier proof of Lemma~\ref{Lem:9/14}.

\end{document}